\newtheorem{theorem}{Theorem}[section]
\newtheorem{corollary}[theorem]{Corollary}
\newtheorem{lemma}[theorem]{Lemma}
\theoremstyle{remark}
\newtheorem*{remark}{Remark}
\numberwithin{equation}{section}
\DeclareMathOperator{\crit}{crit}
\DeclareMathOperator{\diam}{diam}
\DeclareMathOperator{\aut}{Aut}
\DeclareMathOperator{\rad}{rad}
\DeclareMathOperator{\re}{Re}
\DeclareMathOperator{\BMO}{BMO}
\DeclareMathOperator{\GP}{AI}
\title{Analytic mappings of the unit disk which almost preserve hyperbolic area}
\author{Oleg Ivrii and Artur Nicolau  }
\date{September 20, 2024}
\begin{document}
 
\maketitle

\begin{abstract}
In this paper, we study analytic self-maps of the unit disk which distort hyperbolic areas of large hyperbolic disks by a bounded amount. We give a number of characterizations involving angular derivatives, Lipschitz extensions, Möbius distortion, the distribution of critical points and Aleksandrov-Clark measures. 
We also examine the Lyapunov exponents of their Aleksandrov-Clark measures.
\end{abstract}

\section{Introduction}

Let $\mathbb{D}$ be the open unit disk in the complex plane and $\lambda (z) |dz| = 2 |dz| / (1-|z|^2)$ be the hyperbolic metric in $\mathbb{D}$. We denote the  hyperbolic distance between the points $z, w \in \mathbb{D}$ by $d_h (z, w)$ and the hyperbolic area of a measurable set $E \subset \mathbb{D}$ by
$$
A_h (E) = \int_E \frac{4\, dA(z)}{(1-|z|^2)^2 } . 
$$
Let $F: \mathbb{D} \rightarrow \mathbb{D}$ be an analytic self-mapping of the unit disk. The pullback of the hyperbolic metric under $F$ is $\lambda_{F} (z) |dz| = 2 |dz| \cdot |F'(z)| / (1-|F(z)|^2)$, $z \in \mathbb{D}$. The Schwarz lemma says that any analytic self-mapping $F$ of the unit disk contracts the hyperbolic metric, that is, $\lambda_F (z) \leq \lambda (z)$ for any $z \in \mathbb{D}$. Moreover, equality at a single point implies equality at every point in the unit disk and that $F$ is an automorphism of $\mathbb{D}$. Define the {\em Möbius distortion} of $F$ as 
$$
\mu(z) \ = \ 1 -  \frac{\lambda_F (z)}{\lambda (z)}   \ = \ 1 - \frac{(1-|z|^2) |F'(z)|}{1-|F(z)|^2}, \qquad z \in \mathbb{D}.
$$
Since $\mu$ vanishes identically if and only if $F$ is an automorphism of $\mathbb{D}$, the Möbius distortion of $F$ measures how much $F(z)$ deviates from an automorphism of $\mathbb{D}$ near $z \in \mathbb{D}$.

Let $m = |dz|/2\pi$ be the normalized Lebesgue measure on the unit circle $\partial \mathbb{D}$. Inner functions are analytic self-mappings $F$ of the unit disk such that their radial limits $\lim_{r \to 1} F(r \xi)$ have modulus one at $m$ almost every point  $\xi \in \partial \mathbb{D}$. An inner function $F$ has {\em finite entropy} if its derivative $F'$ belongs to the Nevanlinna class $\mathcal N$ of analytic functions $g$ in $\mathbb{D}$ such that
$$
\limsup_{r \to 1} \int_{\partial \mathbb{D}} {\log}^+ |g(r \xi)| dm(\xi) < \infty. 
$$
Inner functions with finite entropy have been extensively studied  \cite{craizer, inner, stable, IK22, IN22, inner-tdf, laminations} 
 and play an important role in our discussion.  An analytic self-mapping $F$ of the unit disk is said to have a {\em finite
angular derivative} (in the sense of Carath\'eodory) at a point $\xi \in \partial \mathbb{D}$ if $\lim_{r \to 1} F(r\xi )$ belongs to the unit circle and $ \lim_{r \to 1} F' (r \xi) $ exists. In this case, we write $|F' (\xi)| = \lim_{r \to 1} |F' (r \xi)|$. If $F$ does not have a finite angular derivative at $\xi$, it is customary to set $|F'(\xi)| := \infty$. 

The hyperbolic disk centered at $z \in \mathbb{D}$ of hyperbolic radius $R>0$ is denoted by $B_h (z, R)$. We say that an analytic self-mapping $F$ of the unit disk {\em almost preserves hyperbolic area} (APHA) if there exists a constant $c>0$ such that
\begin{equation}
\label{eq:area-condition}
A_h (F(B_{h}(z, R)) \ge c  \, A_h (B_{h}(z, R)), \quad z \in \mathbb{D}, \quad R > 1.
\end{equation}
In view of the Schwarz lemma, \eqref{eq:area-condition} says that the hyperbolic area of the image of any hyperbolic disk is almost as large as possible. In this paper, we present a number of descriptions of APHA mappings in terms of angular derivatives, Lipschitz extensions, Möbius distortion and the  distribution of critical points. Our main motivation is the paper \cite{GP} of J.~Garnett and M.~Papadimitriakis which concerns almost isometries. An analytic self-mapping $F$ of the unit disk is called an {\em almost isometry} if there exists a constant $c>0$ such that ${\diam}_h F(B_{h}(z,R)) \ge 2R - c,$ for any $z \in \mathbb{D}$ and $R>0$. Garnett and Papadimitriakis observed that almost isometries are necessarily Blaschke products and found an elegant description in terms of angular derivatives. 

 If $C$ is the critical set of some Blaschke product, then it is the critical set of a particular Blaschke product $F = F_C$ called the {\em maximal Blaschke product} which maximizes $\lambda_F(z)$
out of all Blaschke products with critical set $C$. The maximal Blaschke product $F_C$  is uniquely determined up to post-composition with M\"obius transformations. For general properties of maximal products, we refer the reader to \cite{kraus, KR-survey, KR-maximal}.

We use the following notation: For a point $z \in \mathbb{D}$, we write $\omega (z, E, \mathbb{D})$ for the harmonic measure seen from $z$ of a measurable set $E \subset \partial \mathbb{D} $ in the unit disk, that is,
$$
\omega (z, E, \mathbb{D}) = \int_E \frac{1-|z|^2}{|\xi - z|^2} \, dm(\xi). 
$$
The classical L\"owner Lemma says that $\omega (F(z), E, \mathbb{D}) = \omega (z, F^{-1} (E), \mathbb{D})$ for any point $z \in \mathbb{D}$,   measurable set $E \subset \partial \mathbb{D}$ and inner function $F$.

If $z \in \mathbb{D} \setminus \{0\}$, we write $I_z$ for the arc of the unit circle centered at $z/|z|$ with $m(I_z) =  1-|z|$. 
More generally, for $K > 0$, we write $K I_z$ for the arc centered at $z/|z|$ with  $m(I_z) = K(1-|z|)$. Conversely, given an arc $I \subset \partial \mathbb{D}$ centered at a point $\xi \in \partial \mathbb{D}$, we write $z_I = (1- m(I)) \xi$. 

For an arc  $I \subset \partial \mathbb{D}$, we write
$$
Q_I = \{ z \in \mathbb{D} : z/|z| \in I, \, 1 - m(I) < |z| < 1 \}
$$
for the Carleson square with base $I$. Given a Carleson square $Q=Q_I$, we denote its ``side length'' by $\ell (Q) = m(I)$.

 Our first main result provides several equivalent characterizations of APHA mappings:

\begin{theorem}
\label{main-thm}
Let $F: \mathbb{D} \to \mathbb{D}$ be an analytic self-map of the unit disk with Möbius distortion $\mu$. The following statements are equivalent:

\begin{enumerate}
\item[{\em (1)}] $F$ is an APHA mapping, that is, there exists a constant $c > 0$ such that 
$$A_h (F(B_{h}(z,R)) > c \, A_h (B_{h}(z,R)), $$ 
for any $z \in \mathbb{D}$ and any $R > 1$. 

\item [{\em (2)}] There exist constants $C, \delta > 0$ so that for any point $z \in \mathbb{D}$ one has 
$$
\omega \biggl( z, \left\{ \xi \in \partial \mathbb{D} : |F' (\xi)| < C \cdot \frac{1-|F(z)|}{1-|z|}\right\}, \mathbb{D} \biggr) \ge \delta.
$$

\item[{\em (3)}] The angular derivative $|F' (\xi)|$ is finite at almost every point $\xi \in \partial \mathbb{D}$, the function   $\log |F'|$ is integrable on the unit circle and there exists a constant $C>0$ such that for any arc $I \subset \partial \mathbb{D}$, one has
\begin{equation}
    \label{mean}
 \log \frac{1-|F(z_I)|}{1-|z_I|} - C \, \leq \,  \frac{1}{m(I)} \int_I \log |F'| dm  \, \leq \,  \log \frac{1-|F(z_I)|}{1-|z_I|} + C .
\end{equation}

\item [{\em (4)}] $F$ is an inner function of finite entropy and there exists a constant $C>0$ so that the outer part $O_{F'}$ of $F'$ satisfies 
\begin{equation}
    \label{outer}
C^{-1} \cdot \frac{1-|F(z)|}{1-|z|} \, \leq \, |O_{F'}(z)| \, \leq \, C \cdot \frac{1-|F(z)|}{1-|z|}, \qquad z \in \mathbb{D}.   
\end{equation}

\item[{\em (5)}] 
$F$ is an inner function such that for any $\varepsilon > 0$, there exists a constant $C(\varepsilon ) >0$ so that for any arc $I \subset \partial \mathbb{D}$, one can find pairwise disjoint subarcs $J_k \subset I$ with $\sum m(J_k) \le \varepsilon \, m(I)$ and 
\begin{equation}
\label{lipschitz}
|F'(z)| \le C(\varepsilon) \cdot  \frac{1-|F(z_I)|}{1-|z_I|}, \quad z \in  Q_I \setminus \bigcup_k Q_{J_k} .
\end{equation}

\item[{\em (6)}] There exists a constant $C>0$ so that for any arc $I \subset \partial \mathbb{D}$ we have
\begin{equation}
\label{estimatedistortion}
\int_{Q_I} \mu(z) \,\frac{dA(z)}{1-|z|} \leq C m(I).
\end{equation}

\item[{\em (7)}] $F$ is a maximal Blaschke product and there exists a constant $C>0$ so that for any arc $I \subset \partial \mathbb{D}$, we have
$$
\sum_{c \in  Q_I:\, F'(c)=0} (1 - |c|) \leq C m(I).
$$
\end{enumerate}
\end{theorem}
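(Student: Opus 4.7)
The plan is to prove the cycle $(1) \Rightarrow (2) \Rightarrow (3) \Rightarrow (4) \Rightarrow (1)$ as the analytic backbone, and then to show separately that each of $(5)$, $(6)$, $(7)$ is equivalent to $(4)$, which is the most convenient formulation because it exposes the outer factor of $F'$.

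\textbf{Core equivalences $(1)$--$(4)$.}
For $(1) \Rightarrow (2)$, I would bound the hyperbolic area with multiplicity by $\int_{B_h(z,R)} \lambda_F^2 \, dA$ and invoke the Schwarz estimate $\lambda_F \leq \lambda$: the area condition then forces $\lambda_F/\lambda$ to be bounded below on a hyperbolically positive-proportion subset of $B_h(z,R)$. Letting $R \to \infty$, using nontangential limits of $F$ and $F'$, and applying L\"owner's lemma to transfer the mass to the boundary gives a set of harmonic measure at least $\delta$ on which the bound in $(2)$ holds; the converse is a direct hyperbolic-area computation. For $(2) \Rightarrow (3)$, evaluating $(2)$ at $z = 0$ and using subharmonicity of $\log|F'|$ gives $\log|F'| \in L^1(\partial \mathbb{D})$, while applying $(2)$ at $z = z_I$ together with Harnack and Koebe estimates produces the two-sided mean bound \eqref{mean}. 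For $(3) \Rightarrow (4)$, the integrability of $\log|F'|$ yields the outer factor $O_{F'}$; the Poisson-integral formula combined with \eqref{mean} gives \eqref{outer} at $z = z_I$, and Harnack extends it to all of $Q_I$; the uniform upper bound on the means forces $F$ to be inner with finite entropy. Finally, $(4) \Rightarrow (1)$: on most of $B_h(z, R)$ the inner factor of $F'$ has modulus close to $1$ (its exceptional sublevel sets being hyperbolically thin thanks to $(4)$), so $\lambda_F \asymp \lambda$ outside a small set and the area bound follows.

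\textbf{Equivalences with $(5)$--$(7)$.}
For $(4) \Leftrightarrow (5)$, the Lipschitz estimate $(5)$ is the Carleson-square version of $(4)$ after excising an $\varepsilon$-small set capturing the sublevel set of the inner factor of $F'$; this is a Frostman-type decomposition, and the converse follows by averaging \eqref{lipschitz} against the Poisson kernel at $z_I$. For $(4) \Leftrightarrow (6)$, writing $1 - \mu(z) = (1-|z|^2)|F'(z)|/(1-|F(z)|^2)$ and decomposing $F'$ into its inner and outer parts reduces \eqref{estimatedistortion} to a Carleson-measure estimate on the zero set of $F'$ in $Q_I$, which is linked back to $(4)$ through the potential-theoretic relation between zeros and outer factor. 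For $(4) \Leftrightarrow (7)$, the theory of maximal Blaschke products \cite{kraus, KR-survey, KR-maximal} provides an explicit expression for $|O_{F_C'}|$ in terms of a hyperbolic potential of the critical set $C$, under which the Carleson condition on $C$ becomes exactly \eqref{outer}; the direction $(4) \Rightarrow (7)$ also requires showing that the singular inner factor of $F'$ is trivial, which follows because any such factor would be incompatible with the tight matching of $|O_{F'}|$ to the extremal value imposed by \eqref{outer}.

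\textbf{Main obstacles.}
The crux is $(3) \Rightarrow (4)$: promoting a mean-value bound on boundary arcs to the pointwise estimate \eqref{outer} uniformly in $\mathbb{D}$ requires a dyadic decomposition of the Poisson integral into arcs around the radial projection of $z$, combined with Harnack comparison and careful handling of the tails. A secondary difficulty is $(4) \Leftrightarrow (7)$, which must genuinely use the extremal structure of maximal Blaschke products and the potential-theoretic representation of the outer factor of $F_C'$ in order to match the Carleson condition on critical points with the analytic bound on $O_{F'}$.
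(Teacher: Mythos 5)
Your high-level architecture (a core cycle through (1)--(4), then attaching (5)--(7)) is reasonable, and your sketches of $(3)\Rightarrow(4)$ and of the role of maximal Blaschke products in (7) are broadly in the spirit of the paper. But there is a genuine gap at the heart of every step that touches Statement (1): you systematically conflate the hyperbolic area of the image $A_h(F(B_h(z,R)))$ with the area counted with multiplicity $\int_{B_h(z,R)}(\lambda_F/\lambda)^2\,dA_h$. In $(1)\Rightarrow(2)$ you pass to the multiplicity version and Chebyshev to conclude that $\lambda_F/\lambda$ is bounded below on a positive proportion of $B_h(z,R)$; this conclusion is strictly weaker than (1) and does \emph{not} imply (2). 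The paper's own counterexample (end of Section 2) makes this concrete: a maximal Blaschke product whose critical set is an orbit of a cocompact Fuchsian group has $\lambda_F/\lambda$ bounded below outside a hyperbolically thin set and satisfies the area condition with multiplicity, yet fails (2) and (4) because its critical set is not even a Blaschke sequence. The same defect kills your $(4)\Rightarrow(1)$: knowing $\lambda_F\asymp\lambda$ off a thin set only bounds the area with multiplicity, and the same example shows the image can still collapse. What is actually needed is (a) for $(1)\Rightarrow(2)$, the observation that the image \emph{set} must escape $B_h(F(z),R-C)$, which produces geodesic rays with bounded lag (almost isometric rays), followed by a L\"owner-lemma trimming to force $F(\xi)\in K I_{F(z)}$ so that bounded lag upgrades to the quantitative bound on $|F'(\xi)|$; and (b) for the converse direction, a uniform bound on the number of collisions, which the paper extracts from Littlewood's inequality $\sum_{F(u)=v}e^{-d_h(u,u_0)}\lesssim e^{-d_h(v,F(u_0))}$. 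Neither ingredient appears in your proposal, and "a direct hyperbolic-area computation" will not substitute for them.

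A second, independent gap is in $(2)\Rightarrow(3)$. Applying (2) once at $z=z_I$ (or at $0$) controls $\log|F'|$ only on a subset of $I$ of measure $\delta\,m(I)$; on the remaining $(1-\delta)$-fraction you have no upper bound at all, so neither integrability of $\log|F'|$ nor the upper inequality in \eqref{mean} follows. The paper closes this by a stopping-time iteration: decompose $I$ into maximal dyadic arcs $J_k$ on which $\log\frac{1-|F(z_{J_k})|}{1-|z_{J_k}|}$ has jumped by $M$, use (2) together with Corollary \ref{a-lower-bound-angular-derviative} to show $\sum m(J_k)\le(1-\delta)m(I)$, and iterate to get exponential decay of the distribution function of $\log|F'|-\log\frac{1-|F(z_I)|}{1-|z_I|}$ (which in fact yields the BMO statement $(3')$). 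Some version of this Calder\'on--Zygmund-type recursion is unavoidable, and your "Harnack and Koebe estimates" do not supply it.
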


\subsection*{Remarks}
 
\paragraph*{1.} J. Garnett and M. Papadimitrakis proved in \cite{GP} that an analytic self-mapping $F$ of the unit disk is an almost isometry if and only if there exists a constant $C>0$ so that for any arc $I$ of the unit circle there is a point $\xi \in I$ with
$$
|F'(\xi)| < C \cdot \frac{1-|F(z_I)|}{1-|z_I|}.
$$
One can view Statement (2) as an analogue of this result where one replaces {\em existence} with {\em abundance}.    

 \vspace{-4pt}

\paragraph*{2.} By Corollary \ref{a-lower-bound-angular-derviative} in Section 3, there exists a constant $C>0$ such that for any analytic self-mapping $F$ of the disk, we have 
      $$
  \log \frac{1-|F(z_I)|}{1-|z_I|} \le \frac{1}{m(I)} \int_I \log |F'| dm + C.
$$
In other words, the lower bound in \eqref{mean} always holds.

\paragraph*{3.}  Let $\BMO(\partial \mathbb{D})$ be the space of integrable functions $h$ on $\partial \mathbb{D}$ such that
$$
\sup \frac{1}{m(I)} \int_I \, \biggr| h(\xi) - \frac{1}{m(I)} \int_I h \biggr| \, dm(\xi) < \infty,
$$
where the supremum is taken over all arcs $I \subset \partial \mathbb{D}$. We will see in Section \ref{sec:equivalence-2345} that Statement (3) can be replaced by a seemingly stronger one:

$(3')$ The angular derivative $|F' (\xi)|$ is finite at almost any point $\xi \in \partial \mathbb{D}$, the function   $\log |F'|$ is in $\BMO(\partial \mathbb{D})$ and for any arc $I \subset \partial \mathbb{D}$, one has
\begin{equation*}
\frac{1}{m(I)} \int_I \log |F'| dm = \log \frac{1-|F(z_I)|}{1-|z_I|} + O(1),
\end{equation*}
where $O(1)$ denotes a quantity which is bounded by a constant independent of the arc $I$. 

However, not every inner function $F$ with $\log|F'| \in \BMO(\partial \mathbb{D})$ satisfies the hypotheses of Theorem \ref{main-thm}. For instance, the singular inner function $F(z) = \exp \bigl ( \frac{z+1}{z-1} \bigr )$ has  $\log|F'(\xi)| = 2 \log |1-\xi|^{-2} \in \BMO(\partial \mathbb{D})$ but is not an APHA mapping since it is not a Blaschke product, see Lemma \ref{island}.

 \vspace{-4pt}

\paragraph*{4.} 
The {\em Lyapunov exponent} of a Borel probability measure $\sigma$ on $\partial \mathbb{D}$ is defined as 
$$
\chi (\sigma , F ) = \int_{\partial \mathbb{D}} \log |F'| d \sigma.  
$$
Typically, the Lyapunov exponent is studied for invariant measures, in which case, it measures the average rate of expansion under forward iteration, but the above definition makes sense for arbitrary measures.

Let $F$ be an inner function of finite entropy. For $p \in \mathbb{D}$, consider the mapping $F_p = \tau_p \circ F$, where $\tau_p$ is an automorphism of $\mathbb{D}$ with $\tau_p (F(p))= p$, so that $p$ is a fixed point of $F_p : \mathbb{D} \rightarrow \mathbb{D}$. By  L\"owner's lemma, $\omega_p (\xi) = \omega(p, \xi , \mathbb{D})$ is $F_p$-invariant, that is,
$\omega_p (F_p^{-1} (E)) = \omega_p (E)$ for any measurable set $E \subset \partial \mathbb{D}$. We define
$$
\chi (\omega_p , F_p) = \int_{\partial \mathbb{D}} \log |F'_p (\xi)| d\omega_p(\xi). 
$$
By the chain rule, 
$$
\chi (\omega_p , F_p) = \int_{\partial \mathbb{D}} \log |F' (\xi)| d\omega_p(\xi) + \int_{\partial \mathbb{D}} \log |\tau'_p  (F(\xi))| d\omega_p(\xi).
$$
As the second integral is $\log |\tau '_p (F(p))| = \log ((1-|p|^2) / (1- |F(p)|^2 )$, we have
$$
\chi (\omega_p , F_p)  = \int_{\partial \mathbb{D}} \log |F' (\xi)| d\omega_p(\xi) + \log \frac{1-|p|^2}{1-|F(p)|^2}. 
$$
Statement (4) in Theorem \ref{main-thm} now gives the following description of APHA maps:

\begin{corollary}
    \label{conformalinvariant}
Let $F$ be an inner function of finite entropy. Then $F$ is an APHA mapping if and only if
$$
\sup_{p \in \mathbb{D}} \chi (\omega_p , F_p) < \infty . 
$$
\end{corollary}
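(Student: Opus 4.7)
The plan is to reduce the corollary to the outer-function characterization of APHA maps given by statement~$(4)$ of Theorem~\ref{main-thm}. Starting from the identity already computed in Remark~4,
\[
\chi(\omega_p, F_p) \;=\; \int_{\partial \mathbb{D}} \log |F'(\xi)| \, d\omega_p(\xi) \;+\; \log \frac{1-|p|^2}{1-|F(p)|^2},
\]
I would first note that, since $F$ has finite entropy, $\log|F'|$ is integrable on $\partial \mathbb{D}$, and its Poisson integral at $p$ coincides with $\log|O_{F'}(p)|$ by the defining mean-value property of the outer part. Substituting,
\[
\chi(\omega_p, F_p) \;=\; \log \biggl( |O_{F'}(p)| \cdot \frac{1-|p|^2}{1-|F(p)|^2} \biggr).
\]
Since $(1+|p|)/(1+|F(p)|)$ always lies between $1/2$ and $2$, bounding $\chi(\omega_p, F_p)$ uniformly in $p$ from above (respectively, below) is equivalent to a one-sided comparison of $|O_{F'}(p)|$ with $(1-|F(p)|)/(1-|p|)$.

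The forward direction is then immediate from~\eqref{outer}: if $F$ is APHA, the upper bound in $(4)$ yields $\chi(\omega_p, F_p) \le \log(2C)$ for every $p \in \mathbb{D}$, and hence $\sup_p \chi(\omega_p, F_p) < \infty$.

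For the converse, the hypothesis $\sup_p \chi(\omega_p, F_p) < \infty$ directly delivers the upper bound in~\eqref{outer}; to conclude via statement~$(4)$ that $F$ is APHA it remains only to supply the matching lower bound in~\eqref{outer}. The plan here is to argue that this lower bound is automatic for every inner function of finite entropy, being the Poisson-integral analogue of the always-valid arc-average lower bound of Remark~2, proved as Corollary~\ref{a-lower-bound-angular-derviative} in Section~3. In Lyapunov-exponent language this is the statement $\inf_p \chi(\omega_p, F_p) > -\infty$, reflecting the classical positivity of the Lyapunov exponent of an inner self-map at an interior fixed point. The main technical obstacle is precisely the passage from arc averages of $\log|F'|$ on $I$ to the Poisson integral at $z_I$, which I expect to handle via a dyadic decomposition of $\partial\mathbb{D}$ into arcs concentric with $I$ of doubling length, combined with the observation that $\log\bigl((1-|F(z_J)|)/(1-|z_J|)\bigr)$ varies by only $O(1)$ when $J$ ranges over a chain of nested arcs of comparable size, so that the off-$I$ contributions telescope.
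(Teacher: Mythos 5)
Your argument is correct and its core coincides with the paper's: the corollary is obtained by combining the identity $\chi(\omega_p,F_p)=\log\bigl(|O_{F'}(p)|\,(1-|p|^2)/(1-|F(p)|^2)\bigr)$ (the Poisson integral of $\log|F'|$ being $\log|O_{F'}|$) with the outer-function characterization in Statement (4) of Theorem \ref{main-thm}. The one place where you diverge is the lower bound in \eqref{outer} needed for the converse. The paper gets this for free: it is exactly Dyakonov's reverse Schwarz--Pick inequality \eqref{eq:dyakonov}, stated in Remark 5 immediately after the corollary and valid for every inner function of finite entropy, so no further work is required. You instead propose to derive it from Corollary \ref{a-lower-bound-angular-derviative} by summing over dyadic arcs $2^kI_p$ concentric with $I_p$. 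That route does close: on the $k$-th dyadic annular arc the pointwise bound of Corollary \ref{a-lower-bound-angular-derviative} together with the Schwarz lemma gives $\log|F'(\xi)|\ge \log\frac{1-|F(p)|}{1-|p|}-O(k)$, while the harmonic measure $\omega_p$ of that arc is $\asymp 2^{-k}$, so the total deficit is $\sum_k O(k)2^{-k}=O(1)$ --- the mechanism is geometric decay of the Poisson mass beating the linear growth of the error, rather than a genuine telescoping. In effect you reprove Dyakonov's inequality up to a bounded multiplicative factor, which makes your argument more self-contained but longer than necessary; citing \eqref{eq:dyakonov} is the shortcut the authors intend.
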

    
A similar computation involving L\"owner's lemma shows that if $F, G$ are inner functions of finite entropy, then
$\chi(\omega_p, F \circ G) = \chi(\omega_p,G) + \chi(\omega_{G(p)}, F).$
Together with Corollary \ref{conformalinvariant}, this implies that APHA mappings are closed under composition.

 \vspace{-4pt}

\paragraph*{5.} For an inner function $F$ of finite entropy, let $ F' = I_{F'} O_{F'}$ be its inner-outer factorization. The estimate \eqref{outer} is related to a reverse Schwarz-Pick inequality due to K.~Dyakonov \cite[Corollary 2.1]{dyakonov2}, which says that if $F$ is an inner function of finite entropy then 
\begin{equation}
\label{eq:dyakonov}
\frac{1-|F(z)|^2}{1-|z|^2} \le      | O_{F'} (z)|, \qquad z \in \mathbb{D}.
\end{equation}
In other words, Statement (4) says that APHA mappings are precisely the ones for which Dyakonov's inequality is sharp up to a bounded factor.

 \vspace{-4pt}

\paragraph*{6.}  The estimate on $|F' (z)|$ in Condition (5) says that $F$ is a Lipschitz function on $Q_I \setminus \bigcup Q_{J_k}$ with Lipschitz constant bounded by $$ C(\varepsilon) \cdot \frac{1-|F(z_I)|}{1-|z_I|}.$$ 
In particular, $\diam F(Q_I \setminus \bigcup Q_{J_k}) \leq C(\varepsilon) \cdot (1-|F(z_I)|)$.

 \vspace{-4pt}

\paragraph*{7.}  A discrete set of points $Z$ in the unit disk is called a {\em Blaschke sequence} if $$\sum_{z \in Z} (1- |z|) < \infty.$$ A Blaschke product $B$ is called a {\em Carleson-Newman Blaschke product} if
      there exists a constant $C>0$ such that 
      $$
      \sum_{z \in Q_I: \, B(z) = 0} (1-|z|) \leq C m(I),
      $$
      for any arc $I \subset \partial \mathbb{D}$. This condition may be understood as the scale invariant version of the Blaschke condition.
      
   Similarly, (\ref{estimatedistortion}) can be understood as the scale invariant version of the estimate
      $$
      \int_{\mathbb{D}} \mu(z) \, \frac{dA(z)}{1-|z|} \lesssim \int_{\partial \mathbb{D}} \log |F' (\xi)| dm (\xi),
      $$
      which holds for any centered inner function $F$ of finite entropy, see Lemma \ref{finite-entropy-characterization}.

 \vspace{-4pt}

\paragraph*{8.} 
From Jensen's formula, it is not difficult to see that if $F$ is an inner function of finite entropy, then its critical set is a Blaschke sequence. Conversely, D.~Kraus \cite[Theorem 4.4]{kraus} noticed that any Blaschke sequence $C$ arises as the critical set of a Blaschke product and the maximal Blaschke product $F_C$ has finite entropy. In other words, one has a bijection between Blaschke sequences and maximal Blaschke products with derivative in Nevanlinna class. Statement (7) says that there is a bijection between APHA mappings and sequences of points in the unit disk satisfying the Carleson condition.

\subsection{Aleksandrov-Clark measures}

Given an analytic mapping $F$ from the unit disk to itself (not necessarily inner) and a point $\alpha \in \partial \mathbb{D}$,
    the function $(\alpha + F)/(\alpha - F)$ has positive real part
    and hence there exists a positive measure $\sigma_\alpha = \sigma_\alpha (F)$ on the unit circle
    and a constant $C_{\alpha} \in \mathbb{R}$ such that
    \begin{equation}
        \label{eq:ACMeasureHerglotz}
        \frac{\alpha + F(z)}{\alpha - F(z)} = \int_{\partial \mathbb{D}} \frac{\xi + z}{\xi - z}\, d\sigma_\alpha(\xi) + iC_{\alpha}, \qquad z \in  \mathbb{D}. 
    \end{equation}
    The measures $\{\sigma_\alpha\colon \alpha \in \partial \mathbb{D}\}$ are called the {\em Aleksandrov-Clark} measures of the function $F$.
    These measures have been introduced by D.~Clark in relation with operator theory
    and were throughly investigated by A.~B.~Aleksandrov who recognized their importance in function theory.
We highlight several elementary properties of Aleksandrov-Clark measures that will be used throughout this work:

\begin{itemize}
  \item If $F(0)=0$, then $\{\sigma_\alpha\colon \alpha \in \partial \mathbb{D}\}$ are probability measures.
  \item
 $F$ is an inner function if and only if $\sigma_\alpha$ is a singular measure for some (and hence, all) $\alpha \in \partial \mathbb{D}$.
 \item   If $F$ is an inner function, then the mass of $\sigma_\alpha$ is carried by the set $F^{-1}(\{\alpha\}) \subset \partial \mathbb{D}.$
 \item The measure $\sigma_{\alpha}$ varies continuously in $\alpha$ in the weak topology of measures on the unit circle.
 \item $F$ has a finite angular derivative at a point $\beta \in \partial \mathbb{D}$ if and only if $\sigma_{F(\beta)}$ has an atom at $\beta$. In this case, $\sigma_{F(\beta)}(\{ \beta \}) = 1/|F'(\beta)|$.
\end{itemize}

    We refer the reader to the surveys  \cite{PolSar, Saks}
    as well as \cite[Chapter~IX]{cima} for further properties of Aleksandrov-Clark measures and a wide range of applications. 
    
The following theorem describes Aleksandrov-Clark measures of APHA mappings. Given a positive Borel measure $\sigma$ on the unit circle, consider the function
\begin{equation}
    \label{Hdefinition}
H[\sigma](z) = \int_{\partial \mathbb{D}} \frac{d\sigma(\xi)}{|\xi - z|^2}, \qquad z \in \overline{\mathbb{D}}.
\end{equation}

\begin{theorem}
\label{ACofAPHAM}
Let $\sigma$ be a positive Borel measure on $\partial \mathbb{D}$ and $H[\sigma]$ be the function defined in \eqref{Hdefinition}. The following statements are equivalent:

\item[{\em (1)}] The measure $\sigma$ is an Aleksandrov-Clark measure of some APHA mapping and some $\alpha \in \partial \mathbb{D}$. 

 \item[{\em (2)}] The function $\log H[\sigma] \in \BMO (\partial \mathbb{D}) $ and there exists a constant $C>0$ so that 
$$
\log H[\sigma](z_I) - C \, \leq \, \frac{1}{m(I)} \int_I \log H[\sigma](\xi) dm (\xi) \, \leq \, \log H[\sigma](z_I) + C,
$$
for any arc $I \subset \partial \mathbb{D}$.
\end{theorem}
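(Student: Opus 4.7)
The plan is to translate the mean-value condition (2) on $\sigma$ into condition (3) of Theorem \ref{main-thm} on $F$, using the Herglotz identity
\[
(1-|z|^2)\,H[\sigma_\alpha](z) \;=\; \re\frac{\alpha+F(z)}{\alpha-F(z)} \;=\; \frac{1-|F(z)|^2}{|\alpha-F(z)|^2}, \qquad z \in \mathbb{D},
\]
together with its boundary trace $H[\sigma_\alpha](\xi) = |F'(\xi)|/|\alpha-F(\xi)|^2$, valid $m$-a.e.\ once $F$ has finite angular derivatives almost everywhere. Specializing the interior identity to $z = z_I$ and integrating the logarithm of the boundary identity over $I$ yields the backbone
\[
M_I\bigl(\log H[\sigma_\alpha]\bigr) - \log H[\sigma_\alpha](z_I) = \Bigl[M_I(\log|F'|) - \log\tfrac{1-|F(z_I)|^2}{1-|z_I|^2}\Bigr] - 2\Bigl[M_I(\log|\alpha-F|) - \log|\alpha-F(z_I)|\Bigr],
\]
where $M_I(f) := m(I)^{-1}\int_I f\,dm$; this identity drives both directions.

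For $(1) \Rightarrow (2)$ the first bracket is $O(1)$ by the $\BMO$ strengthening $(3')$ of Theorem \ref{main-thm} (Remark 3). For $(2) \Rightarrow (1)$, one first observes that $\sigma$ must be singular (otherwise $H[\sigma] \equiv \infty$ on a positive-measure set, violating $\log H[\sigma] \in \BMO$) and constructs $F$ from $\sigma$ via \eqref{eq:ACMeasureHerglotz}; rearranging the backbone reduces the verification of condition (3) to control of the same two brackets. The common crux is thus to choose $\alpha \in \partial \mathbb{D}$ making the second bracket uniformly $O(1)$ and $\log H[\sigma_\alpha] \in \BMO(\partial \mathbb{D})$. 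The decisive observation is that $\alpha - F$ has no zeros in $\mathbb{D}$ (because $F(\mathbb{D}) \subset \mathbb{D}$ and $\alpha \in \partial \mathbb{D}$), so its inner-outer factorization reads $\alpha - F = S_\alpha O_\alpha$ with $S_\alpha$ a singular inner function; using $\log|O_\alpha(z_I)| = P[\log|\alpha-F|](z_I)$, the second bracket reduces to $-\log|S_\alpha(z_I)|$, modulo the $\BMO$ discrepancy between arc averages and Poisson integrals.

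A Frostman-type argument, exploiting that $F$ has finite angular derivatives $m$-a.e.\ (guaranteed by the finite-entropy hypothesis built into APHA), selects an $\alpha$ for which $S_\alpha$ is trivial, so $\alpha - F$ is outer and $-\log|S_\alpha(z_I)| = 0$. The $\BMO$ regularity of $\log|\alpha - F|$ is then inherited from that of $\log|F'|$ via the Carleson-measure characterization of BMOA applied to $(\log(\alpha - F))' = -F'/(\alpha - F)$, using the Schwarz-Pick bound on $(1-|z|^2)|F'(z)|$ and condition (6) of Theorem \ref{main-thm} to control the resulting Carleson integral. Verifying these two properties of $\alpha$ — triviality of $S_\alpha$ and $\BMO$ regularity of $\log|\alpha - F|$ — constitutes the main technical obstacle, combining classical inner-outer theory with the Carleson-measure theory of BMOA in a manner specific to the rigidity afforded by APHA.
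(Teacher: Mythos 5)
Your route is genuinely different from the paper's. The paper never splits $\log H[\sigma]$ into $\log|F'|-2\log|\alpha-F|$; instead it computes $\Delta\bigl(\log H[\sigma](z)\bigr)(1-|z|^2)=4\bigl(1-(\lambda_F/\lambda)^2\bigr)/(1-|z|^2)$, so that Condition (6) of Theorem \ref{main-thm} is \emph{identical} to the statement that $\Delta(\log H[\sigma])(1-|z|^2)\,dA$ is a Carleson measure, and then passes to the BMO/mean-value statement by a Green's-formula stopping-time lemma together with Str\"omberg's theorem. Your backbone identity is correct and the reduction to Condition $(3')$ is sound, but the two facts you single out as the ``main technical obstacle'' are exactly where the proposal has gaps.

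First, in the direction $(1)\Rightarrow(2)$ you do not get to choose $\alpha$: the hypothesis hands you a pair $(F,\alpha)$ with $\sigma=\sigma_\alpha(F)$, so a Frostman-type selection of a good $\alpha$ proves the conclusion only for measures attached to that particular $\alpha$, which is a genuine logical gap. Fortunately the fact you need is automatic and needs no selection: $\re\frac{\alpha+F}{\alpha-F}>0$ forces $\re\frac{2\alpha}{\alpha-F}>1$, and a zero-free function $g$ with $\re g>0$ whose reciprocal also has positive real part is outer; hence $\alpha-F$ is outer for \emph{every} $\alpha\in\partial\mathbb{D}$ and every self-map $F$, so $\log|\alpha-F(z_I)|=P[\log|\alpha-F|](z_I)$ always.

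Second, the proposed verification that $\log|\alpha-F|\in\BMO(\partial\mathbb{D})$ does not work with the stated ingredients. Schwarz--Pick gives $|F'|^2(1-|z|^2)/|\alpha-F|^2\le |F'(z)|\,P[\sigma_\alpha](z)$, and near each atom $\beta$ of $\sigma_\alpha$ (a boundary point with finite angular derivative) this is comparable to the Poisson kernel $P_\beta$, each of which already contributes $\asymp m(I)$ to the Carleson integral over $Q_I$; summing over the (typically infinitely many) preimages of $\alpha$ in $I$ gives nothing, and Condition (6) is of no help since it controls $\int\mu\,dA/(1-|z|)$, a quantity that vanishes precisely where $F$ behaves like a M\"obius map — which is where the density above is large. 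The fact itself is nevertheless true, again universally: $\log\frac{2\alpha}{\alpha-F}$ maps $\mathbb{D}$ into a horizontal strip of width $\pi$, so its imaginary part is bounded, and by conjugation its real part $\log 2-\log|\alpha-F|$ lies in $\BMO(\partial\mathbb{D})$ with a universal norm. With these two replacements (plus the standard bound $|M_I(h)-P[h](z_I)|\lesssim\|h\|_{\BMO}$, and deriving — rather than assuming — the a.e.\ finiteness of $|F'|$ from $\log H[\sigma]\in L^1$ in the direction $(2)\Rightarrow(1)$), your argument closes and is arguably more elementary than the paper's; as written, however, both pivotal steps rest on arguments that either do not apply or do not suffice.
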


The proof of Theorem \ref{ACofAPHAM} relies on Theorem \ref{main-thm}. We will see that the lower bound in Condition (2) holds for any positive Borel measure $\sigma$, so only the upper bound is essential. 

\subsection{Lyapunov exponents}

Finally, we study the Lyapunov exponents of Aleksandrov-Clark measures of APHA mappings. Let $F$ be an analytic self-mapping of the unit disk and $\{\sigma_\alpha : \alpha \in \partial \mathbb{D} \}$ be the family of its Aleksandrov-Clark measures. It is well-known that $F$ has angular derivatives at $m$ almost every point on the unit circle if and only if for $m$ a.e.~$\alpha \in \partial \mathbb{D}$, the measure $\sigma_\alpha$ is discrete, e.g.~see \cite[Theorem 9.6.1]{cima} or \cite[Theorem 3.1]{inner-tdf}. In this case, 
$$
\sigma_\alpha = \sum_{F(\beta)=\alpha} |F'(\beta)|^{-1} \delta_\beta, \qquad \alpha \in \partial \mathbb{D}, 
$$
and the Lyapunov exponent $\chi(\sigma_\alpha, F)$ of $\sigma_\alpha$ is given by 
$$
\chi(\sigma_\alpha, F) \, = \, \int_{\partial \mathbb{D}} \log |F'(\xi)| d \sigma_\alpha (\xi) \, = \, \sum_{F(\beta)=\alpha} |F'(\beta)|^{-1} \log |F'(\beta)|,  \qquad \alpha \in \partial \mathbb{D}. 
$$
We will show in Lemma \ref{nevanlinna-ac} that if $F$ is an inner function, then $F$ has finite entropy if and only if for $m$ a.e.~$\alpha \in \partial \mathbb{D}$, the measure $\sigma_\alpha$ is discrete and 
$$
\int_{\partial \mathbb{D}} \chi(\sigma_\alpha, F) dm (\alpha) < \infty . 
$$
The Lyapunov exponents of the Aleksandrov-Clark measures of APHA mappings satisfy a stronger condition:

\begin{theorem}
\label{ac-measures-for-bmo-inner-functions}
Let $F$ be an APHA mapping and $\{\sigma_\alpha : \alpha \in \partial \mathbb{D} \}$ be the collection of its Aleksandrov-Clark measures. Then, $\chi(\sigma_\alpha, F)$ agrees with a $C^\infty$ function a.e.~on the unit circle.
\end{theorem}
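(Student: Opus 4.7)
The plan is to identify $\chi(\alpha) := \chi(\sigma_{\alpha},F)$ with the boundary values of a natural harmonic function on $\mathbb{D}$ and then use the APHA characterizations from Theorem \ref{main-thm} to show that this harmonic function extends smoothly across $\partial\mathbb{D}$.

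\medskip

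\noindent\textbf{Setup.} Since $F$ is APHA, Theorem \ref{main-thm}(4) tells us $F$ is inner of finite entropy, so Lemma \ref{nevanlinna-ac} (cited in the introduction) gives that $\sigma_\alpha$ is discrete for a.e.~$\alpha \in \partial\mathbb{D}$, with
$$\chi(\alpha) \ = \ \sum_{F(\beta)=\alpha} \frac{\log |F'(\beta)|}{|F'(\beta)|}.$$
After post-composing with a M\"obius transformation (which does not affect the smoothness of $\chi$), I may assume $F(0)=0$, so $\sigma_\alpha$ is a probability measure and $\chi(\alpha) = -\sum_i w_i \log w_i$ is the Shannon entropy of $\sigma_\alpha$, where $w_i = |F'(\beta_i)|^{-1}$.

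\medskip

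\noindent\textbf{Step 1: Harmonic representation.} The key identity is obtained by the multi-valued change of variables $\alpha = F(\beta)$, whose boundary Jacobian is $|F'(\beta)|$: for any test function $\phi$ on $\partial\mathbb{D}$,
$$\int_{\partial\mathbb{D}} \chi(\alpha)\,\phi(\alpha)\,dm(\alpha) \ = \ \int_{\partial\mathbb{D}} \phi(F(\beta))\,\log|F'(\beta)|\,dm(\beta).$$
Taking $\phi = P_z$ identifies $\chi$ as the non-tangential boundary values of the harmonic function
$$V(z) \ := \ \int_{\partial\mathbb{D}} \log|F'(\beta)|\, P_z(F(\beta))\, dm(\beta) \ = \ P[\chi](z), \qquad z \in \mathbb{D}.$$
In particular, showing that $\chi$ agrees a.e.~with a $C^\infty$ function is equivalent to showing that $V$ extends to a $C^\infty$ function on $\overline{\mathbb{D}}$, equivalently that the Fourier coefficients
$$\widehat{\chi}(n) \ = \ \int_{\partial\mathbb{D}} F(\beta)^{-n}\,\log|F'(\beta)|\,dm(\beta)$$
decay faster than any polynomial.

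\medskip

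\noindent\textbf{Step 2: Reduction to a Hardy-space pairing.} Writing $\log|F'| = \tfrac{1}{2}(\log O_{F'} + \overline{\log O_{F'}})$ (valid a.e.~on the circle since $|I_{F'}|=1$), a Fourier support argument kills the anti-analytic term: $\int \overline{\log O_{F'}}\cdot F^{-n}\,dm = 0$ for $n \geq 1$ because both factors have Fourier support $\leq 0$ and the product's Fourier support is $\leq -n$. Thus
$$\widehat{\chi}(n) \ = \ \tfrac{1}{2}\langle \log O_{F'},\, F^n\rangle_{H^2}, \qquad n \geq 1.$$

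\medskip

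\noindent\textbf{Step 3 (main obstacle): Rapid decay of the pairing.} The crux is to extract from the APHA hypothesis the fact that this $H^2$ pairing decays faster than any polynomial. The natural input is Theorem \ref{main-thm}(4), i.e.~$|O_{F'}(z)| \asymp (1-|F(z)|)/(1-|z|)$: this comparability is much stronger than $\log|F'| \in \BMO(\partial\mathbb{D})$ alone and should force $\log O_{F'}$ to be essentially a function of $F$ modulo a bounded error, which would make $\langle \log O_{F'},F^n\rangle_{H^2}$ behave like a rapidly decaying Fourier coefficient $\widehat{h}(-n)$ of some smooth $h$. I would attempt this by writing $\log O_{F'}(z) = h(F(z)) + E(z)$ with $h$ analytic off a small set in $\overline{\mathbb{D}}$ and $E \in H^\infty$, using the APHA Carleson estimate on Möbius distortion from Theorem \ref{main-thm}(6) to control the error term $\langle E, F^n\rangle$, and using that $F^n$ tends weakly to $0$ in $H^2$ with quantitative rate (provided by the Carleson critical set condition from Theorem \ref{main-thm}(7)). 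This combination of all equivalent conditions of Theorem \ref{main-thm} is the substantive content of the proof; the rest of the argument is formal.

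\medskip

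\noindent\textbf{Step 4: Conclusion.} Once $|\widehat\chi(n)| = O(|n|^{-N})$ for every $N$ is established, $V$ extends to a $C^\infty$ function $g$ on $\overline{\mathbb{D}}$ and $\chi(\alpha) = g(\alpha)$ for a.e.~$\alpha \in \partial\mathbb{D}$ as the non-tangential boundary values of a Poisson integral.
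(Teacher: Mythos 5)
Your Steps 1, 2 and 4 are sound: the disintegration identity $\int \chi\,\phi\,dm = \int \phi(F)\log|F'|\,dm$ is correct, the reduction to $\widehat{\chi}(n) = \tfrac{1}{2}\langle \log O_{F'}, F^n\rangle_{H^2}$ for $n\ge 1$ is legitimate (using $F(0)=0$ to kill the anti-analytic term), and rapid decay of these coefficients would indeed give the theorem. But Step 3 is not a proof; it is a statement of the problem, and the strategy you sketch for it would not work as described. There is no reason to expect a decomposition $\log O_{F'} = h\circ F + E$ with $E \in H^\infty$: condition (4) gives $\log|O_{F'}(z)| = \log(1-|F(z)|) - \log(1-|z|) + O(1)$, and the term $-\log(1-|z|)$ is not a function of $F(z)$ up to a bounded error. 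Worse, even granting such a decomposition, $\langle E, F^n\rangle$ for a general $E \in H^\infty$ has no rapid decay (already for $F(z)=z$ this is $\widehat{E}(n)$, which for bounded $E$ need only be square-summable), and weak convergence $F^n \rightharpoonup 0$ in $H^2$ carries no rate that is uniform over the pairing partner. So the entire quantitative content of the theorem is missing.

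For comparison, the paper's route to exactly this decay is operator-theoretic. One shows (Lemma \ref{sigma-half-plane}) that the weighted composition operators $C_s g = (g\circ F)\,O_{F'}^{-s}$ are bounded on the weighted Bergman spaces $A^2_\gamma$ for every $\gamma > -1$ and depend differentiably on $s$ near $0$; the proof of boundedness is where all the APHA input enters --- condition (4) to replace $O_{F'}$ by $(1-|F|)/(1-|z|)$, Littlewood's counting lemma (Corollary \ref{counting-preimages}) to control the multivalued change of variables $w \mapsto F(w)$, and the Carleson--Newman structure of the inner part of $F'$ (via Corollary \ref{good-annuli}) to pass from $O_{F'}$ to $F'$ away from the critical points. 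Differentiating at $s=0$ gives $\dot C_0 g = -(g\circ F)\log O_{F'}$, and dualizing against the constant function places $-\chi(\sigma_\alpha,F) = \re \int \log O_{F'}\, d\sigma_\alpha$ in the Dirichlet-type space $\mathcal D_{1+\gamma}$, hence in $W^{(1+\gamma)/2,2}(\partial\mathbb{D})$ for every $\gamma$, which is precisely the rapid $\ell^2$-weighted decay of $\widehat{\chi}(n)$ you were aiming for. If you want to salvage your outline, Step 3 should be replaced by a direct norm estimate of this kind rather than a structural decomposition of $\log O_{F'}$.
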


We will see in Lemma \ref{lsc-lyapunov} that the Lyapunov exponent $\chi(\sigma_\alpha, F)$ defines a lower semicontinuous function on the unit circle. Together with the above theorem, this implies that the measure $\sigma_\alpha$ is discrete and $\chi(\sigma_\alpha, F)$ is finite for every $\alpha \in \partial \mathbb{D}$.

The main idea in the proof of Theorem \ref{ac-measures-for-bmo-inner-functions} is that the Lyapunov exponent $\chi(\sigma_\alpha, F)$ shows up naturally when studying a certain family of weighted composition operators.

\subsection*{Remarks}
 
\paragraph*{1.} By using \cite[Lemma 8.3]{inner-tdf} in place of Lemma \ref{sigma-half-plane} below, one can show that the conclusion of Theorem \ref{ac-measures-for-bmo-inner-functions} also holds for one component inner functions. While the two classes have a number of similarities,
it is not difficult to come up with examples showing that neither class contains the other.


 \vspace{-4pt}

\paragraph*{2.} Suppose that $F$ satisfies the conditions of Theorem \ref{main-thm}. As $\log |F'| \in \BMO(\partial \mathbb{D})$, the John-Nirenberg Theorem implies that $F' \in H^p$ for some $p > 0$. An argument similar to the one in the proof of Lemma \ref{nevanlinna-ac} below shows that
\begin{equation*}
 \chi_p(\sigma_\alpha, F) \, = \, \sum_{F(\beta)=\alpha} |F'(\beta)|^{-1+p} \, < \, \infty,
\end{equation*}
for $m$ a.e.~$\alpha \in \partial \mathbb{D}$. However, we are unable to give a uniform bound on $\chi_p$, valid for every $\alpha \in \partial \mathbb{D}$.

 \vspace{-4pt}

\paragraph*{3.} Using Condition 3 in Theorem \ref{main-thm}, it is not difficult to check that an inner function $F$ is an APHA mapping if and only if $zF(z)$ is an APHA mapping. Therefore, if $F$ is an APHA mapping with an attracting fixed point at the origin, then the Aleksandrov-Clark measure of $\alpha=1$ for $F(z)/z$ is discrete. In particular, if $F$ is not a finite Blaschke product, then it has infinitely many boundary-repelling fixed points $\xi$ on the unit circle with finite multipliers $F'(\xi) = \lim_{r \to 1} F'(r\xi) > 1$. 
Applying these considerations to the iterates of $F$ shows that $F$ has infinitely many boundary-repelling periodic orbits  of any order  on the unit circle with finite multipliers.

\bigskip

The paper is organized as follows. Section 2 contains some preliminary observations. In particular, we show that APHA maps are inner functions and discuss the condition $R>1$ in definition \eqref{eq:area-condition}. The equivalence of Statements (1) and (2) in Theorem \ref{main-thm} is proved in Section 3. Section 4 is devoted to the proof of the equivalence of Statements (2), (3), (4) and (5) in Theorem \ref{main-thm} while the equivalence of Statements (3), (4), (6) and (7) is discussed in Section 5. The proof of Theorem \ref{ACofAPHAM} is given in Section 6, while the proof of Theorem \ref{ac-measures-for-bmo-inner-functions} is presented in Section 7.

      
\section{Preliminary observations}

In this section, we make a number of simple observations about APHA mappings and related concepts.

\begin{lemma}
\label{island}
Let $F$ be an APHA mapping. Then $F$ is an inner function. In fact, $F$ is an indestructible Blaschke product, that is, $\tau \circ F$ is a Blaschke product for any M\"obius transformation $\tau \in \aut(\mathbb{D})$.
\end{lemma}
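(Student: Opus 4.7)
The plan is to first reduce by M\"obius invariance and then argue by contradiction in two stages. Since hyperbolic area in $\mathbb{D}$ is preserved by every $\tau \in \aut(\mathbb{D})$, the composition $\tau \circ F$ is APHA whenever $F$ is. Therefore it suffices to prove that every APHA mapping is a Blaschke product: applying this to each $\tau \circ F$ yields the indestructibility claim, and in particular $F$ itself is a Blaschke product (hence inner).

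To show an APHA mapping $F$ is inner, I would suppose otherwise. Then there is a set $E \subset \partial \mathbb{D}$ of positive Lebesgue measure and $\delta > 0$ on which the radial boundary value satisfies $|F^\ast| \le 1 - \delta$. Pick a Lebesgue density point $\xi_0$ of $E$. Subharmonicity of $|F|^2$ and the Poisson representation yield
\[
|F(z)|^2 \;\le\; 1 - \omega(z, E, \mathbb{D})\,(2\delta - \delta^2),
\]
so on a truncated non-tangential region $U = S_\alpha(\xi_0) \cap \{|z - \xi_0| < T\}$ chosen so that $\omega(\cdot, E, \mathbb{D}) \ge 1/2$ on $U$, one has $|F| \le 1 - \delta/2$. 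For any fixed $R > 1$, the hyperbolic ball $B_h((1-\epsilon)\xi_0, R)$ has Euclidean diameter $\lesssim \epsilon e^R$ and lies in a Stolz angle at $\xi_0$ of aperture $\lesssim e^{2R}$, so adjusting $\alpha, T$ to $R$ and then choosing $\epsilon$ small enough forces $B_h((1-\epsilon)\xi_0, R) \subset U$. The image then sits in $D(0, 1-\delta/2)$, whose hyperbolic area $M(\delta)$ is a finite constant. Choosing $R$ so large that $c \cdot 4\pi \sinh^2(R/2) > M(\delta)$ contradicts the APHA inequality, so $F$ must be inner.

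Now assume $F$ is inner but not Blaschke, so $F = B \cdot S$ with $S$ a nontrivial singular inner function carrying positive singular measure $\nu$ on $\partial \mathbb{D}$. At $\nu$-almost every $\xi_0$, the symmetric derivative of $\nu$ is infinite, and the estimate
\[
-\log|S((1-\epsilon)\xi_0)| \;=\; \int_{\partial \mathbb{D}} \frac{1-(1-\epsilon)^2}{|\xi - (1-\epsilon)\xi_0|^2}\, d\nu(\xi) \;\ge\; \frac{\nu(I_\epsilon)}{2\epsilon}
\]
(where $I_\epsilon$ is the arc of length $2\epsilon$ centered at $\xi_0$) tends to $+\infty$ as $\epsilon \to 0$. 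Fix $R > 1$ and write $z_\epsilon = (1-\epsilon)\xi_0$. Since $-\log|S|$ is a positive harmonic function on $\mathbb{D}$, Harnack's inequality gives $-\log|S(w)| \ge e^{-R}(-\log|S(z_\epsilon)|)$, i.e.\ $|S(w)| \le |S(z_\epsilon)|^{e^{-R}}$, for every $w \in B_h(z_\epsilon, R)$. Combined with $|B| \le 1$, this produces $|F(w)| \to 0$ uniformly on $B_h(z_\epsilon, R)$ as $\epsilon \to 0$. Hence $F(B_h(z_\epsilon, R))$ is contained in a shrinking Euclidean disk around the origin, so its hyperbolic area tends to zero and contradicts the uniform lower bound $A_h(F(B_h(z_\epsilon, R))) \ge c \cdot 4\pi \sinh^2(R/2) > 0$ forced by APHA. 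Therefore $F$ is a Blaschke product.

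The main technical obstacle is the boundary-geometry bookkeeping in the first step: verifying that the entire hyperbolic ball $B_h((1-\epsilon)\xi_0, R)$ eventually sits in the truncated non-tangential region $U$. This rests on quantitative control of the Euclidean size of hyperbolic balls approaching $\partial \mathbb{D}$ together with the non-tangential behavior of the harmonic measure $\omega(\cdot, E, \mathbb{D})$ at a Lebesgue density point, which is standard but must be carried out with care. The Blaschke step is by comparison clean: once Harnack is applied to the positive harmonic function $-\log|S|$, the uniform vanishing of $|F|$ on $B_h(z_\epsilon, R)$ follows immediately.
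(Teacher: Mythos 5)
Your argument is correct, but it takes a genuinely different (and more self-contained) route than the paper. The paper invokes a criterion from Mashreghi's book (\cite[Lemma 1.8]{mashreghi}): to prove indestructibility it suffices to show that $F$ has no non-tangential limit lying in the open unit disk; it then disposes of this in two lines by noting that a non-tangential limit $a \in \mathbb{D}$ at $\xi$ would produce radii $r_n \to 1$ and $R_n \to \infty$ with $F(B_h(r_n\xi, R_n)) \subset B_h(a,1)$, violating \eqref{eq:area-condition}. You instead avoid that citation entirely: you reduce indestructibility to the single statement ``APHA $\Rightarrow$ Blaschke'' via M\"obius invariance of hyperbolic area, and then rule out a non-inner boundary set by working at a Lebesgue density point of $\{|F^\ast|\le 1-\delta\}$, and rule out a singular factor by working at a point of infinite symmetric derivative of the singular measure together with Harnack. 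Both proofs exploit the identical mechanism --- locate hyperbolic balls pushed toward $\partial\mathbb{D}$ whose images have uniformly bounded (or vanishing) hyperbolic area --- but you must carry out the localization twice and do the Stolz-angle bookkeeping by hand, whereas the paper's citation packages the Fatou/Frostman-type analysis into one criterion. What your version buys is elementarity and independence from the indestructibility literature; what the paper's buys is brevity. Your steps all check out (the reduction $\tau\circ F$ APHA, the harmonic-measure bound $|F(z)|^2 \le 1-\omega(z,E,\mathbb{D})(2\delta-\delta^2)$, the containment of $B_h((1-\epsilon)\xi_0,R)$ in a truncated Stolz region for small $\epsilon$, and the Harnack estimate $|S(w)| \le |S(z_\epsilon)|^{e^{-R}}$ on $B_h(z_\epsilon,R)$), up to harmless constants such as $1-\delta/2$ versus $1-\delta/4$ in the first stage.
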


\begin{proof}
By \cite[Lemma 1.8]{mashreghi}, it is enough to show that if $F$ has a non-tangential limit at a point $\xi \in \partial \mathbb{D}$, then it cannot lie in the unit disk.
On the contrary, if the non-tangential limit at $\xi \in \partial \mathbb{D}$ is $a \in \mathbb{D}$, then
 there exist $r_n \to 1$ and $R_n \to \infty$ such that
$ F(B_{h}(r_n\xi, R_n)) \subset B_h(a,1)$, $n \in \mathbb{N}$, contradicting
  (\ref{eq:area-condition}).
 \end{proof}

The following lemma says that finite Blaschke products are APHA mappings:
    
   \begin{lemma}
   	Suppose $F$ is a finite Blaschke product. Then there exists a constant $c > 0$ so that
	 $F(B_{h}(z,R)) \supset B_{h}(F(z),R-c)$ for any $z \in \mathbb{D}$ and $R > 0$.
   \end{lemma}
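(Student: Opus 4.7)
The plan is to reduce the statement to the normalized situation $z=0$, $F(0)=0$ via the Möbius invariance of the hyperbolic metric, and then to exploit the classical product formula $\prod_k |z_k| = |w|$ for preimages of a finite Blaschke product vanishing at the origin.

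\emph{Reduction.} Choose $\tau_1, \tau_2 \in \aut(\mathbb{D})$ with $\tau_1(0) = z$ and $\tau_2(F(z)) = 0$, and set $G := \tau_2 \circ F \circ \tau_1$. Then $G$ is a finite Blaschke product of the same degree $n := \deg F$ satisfying $G(0) = 0$. Since $\tau_1$ and $\tau_2$ are hyperbolic isometries, the desired inclusion $F(B_h(z, R)) \supset B_h(F(z), R - c)$ is equivalent to $G(B_h(0, R)) \supset B_h(0, R - c)$, so it suffices to produce a constant $c$ depending only on $n$.

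\emph{Preimages via the product formula.} For each $w \in \mathbb{D}$, the composition $\tau_w \circ G$, with $\tau_w(u) = (w - u)/(1 - \bar w u)$, is a finite Blaschke product of degree $n$ whose zeros (counted with multiplicity) are precisely the preimages $z_1, \dots, z_n$ of $w$ under $G$. Writing this Blaschke product in canonical form and evaluating at $z = 0$ yields
$$|w| \ = \ |\tau_w(G(0))| \ = \ \prod_{k=1}^n |z_k|,$$
so at least one preimage satisfies $|z_k| \le |w|^{1/n}$. Using $d_h(0,r) = \frac{1}{2} \log\frac{1+r}{1-r}$ together with the identity $(1 - s^n)/(1 - s) = 1 + s + \cdots + s^{n-1}$, I then obtain
$$d_h(0, |w|^{1/n}) - d_h(0, |w|) \ = \ \frac{1}{2} \log \frac{(1 + |w|^{1/n})\bigl(1 + |w|^{1/n} + \cdots + |w|^{(n-1)/n}\bigr)}{1 + |w|} \ \le \ \frac{1}{2}\log(2n),$$
uniformly in $w \in \mathbb{D}$. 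Setting $c := \frac{1}{2}\log(2n)$ shows that every $w \in B_h(0, R - c)$ admits a $G$-preimage in $B_h(0, R)$, which is exactly the required inclusion.

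I do not anticipate a genuine obstacle: the only conceptual step is recognizing that the right notion of ``uniform distortion'' for a finite Blaschke product normalized so that $G(0)=0$ is captured by the multiplicative identity $\prod_k |z_k| = |w|$, and the rest is a one-line hyperbolic computation. The reduction step guarantees that $c$ depends only on the degree $n$ of $F$ and not on the base point $z$.
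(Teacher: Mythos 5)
Your proof is correct, but it takes a genuinely different route from the paper. The paper normalizes $F(0)=0$ and then invokes McMullen's theorem on lifting geodesics: given $w=F(z)$ and a target $w'$, it splits the geodesic $[w,w']$ into at most three pieces according to whether they meet a large ball containing the critical values of $F$, and continues a branch of $F^{-1}$ along the pieces outside that ball. You instead normalize at each base point ($z\mapsto 0$, $F(z)\mapsto 0$) and use the identity $\prod_k |z_k| = |w|$ for the preimages of $w$ under a degree-$n$ Blaschke product fixing the origin (the equality case of Littlewood's inequality, Lemma 3.5 of the paper), which immediately produces a preimage with $|z_k|\le |w|^{1/n}$; the elementary estimate $d_h(0,|w|^{1/n}) - d_h(0,|w|) \le \log(2n)$ then finishes the argument. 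Your approach is more elementary and self-contained, and it yields an explicit constant $c=\log(2n)$ depending only on $\deg F$, whereas the paper's constant also depends on the location of the critical values (through the radius $R_1$ and $\diam_h F^{-1}(B_h(0,R_0+R_1))$). The only cosmetic point is a normalization mismatch: the paper's hyperbolic metric is $2|dz|/(1-|z|^2)$, so $d_h(0,r)=\log\frac{1+r}{1-r}$ and your bound should read $\log(2n)$ rather than $\tfrac12\log(2n)$; this does not affect the validity of the argument.
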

 
   \begin{proof}
   By composing with a M\"obius transformation, we may assume that $F$ is centered, i.e.~$F(0) = 0$.
   We will use \cite[Theorem 10.11]{mcmullen-rtree} which says that there exists a constant $R_0 > 0$, depending on the degree of $F$, such that if the hyperbolic distance from a geodesic segment $[a, b]$ to the critical values of $F$ is at least $R_0$, then
 $$
   d_h(F^{-1}(a), F^{-1}(b)) = d_h(a,b) + O(1),
   $$
   where $F^{-1}$ is a branch of the inverse map that is continuous along $[a,b]$.
Choose $R_1 > 0$ sufficiently large so that the disk $B_{h}(0, R_1)$ contains the critical values of $F$.

Let $w = F(z)$. Given a point $w' \in \mathbb{D}$, we need to produce a pre-image $z'$ with $d_h(z, z') \le d(w, w') + O(1)$. For this purpose, join $w$ and $w'$ by a hyperbolic geodesic $[w, w']$.
  We first examine two special cases:
  
  \begin{enumerate} 
 \item If $[w, w']$ is contained in $B_{h}(0, R_0 + R_1)$, then for any pre-image $z'$ of $w'$, we have 
$d_h(z, z') \le \diam_h F^{-1}(B_{h}(0, R_0 + R_1))$.

 \item If $[w,w']$ does not pass through $B_{h}(0, R_0 + R_1)$ then one may define $z'$ as the pre-image of $w'$ obtained by analytically continuing
 $F^{-1}(w) = z$ along the geodesic $[w,w']$. The desired estimate then follows from McMullen's theorem mentioned above.
 \end{enumerate}
 
 In the general case, the geodesic $[w, w']$ may intersect $\partial B(0, R_0 + R_1)$ in at most two points, which divide $[w, w']$ in at most 3 pieces that satisfy the assumptions of one of the two special cases.
  \end{proof}
  
Next we show that in the definition (\ref{eq:area-condition}) of APHA mappings one can replace the condition ``for any $R > 1$'' with ``for any $R$ sufficiently large.''
The proof is based on the following two lemmas:

   \begin{lemma}
   \label{lambda-lemma1} Let $F$ be an analytic self-mapping of the unit disk. For any $\delta > 0$, there exists a constant $\rho = \rho (\delta) > 0$ so that if $(\lambda_F/\lambda)(z) > \delta$ then $F$ is injective on $B_{h}(z, \rho)$ and $(\lambda_F/\lambda)(w) > \delta/2$ for $w \in B_{h}(z, \rho)$. 
   \end{lemma}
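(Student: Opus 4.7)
The plan is to combine the Möbius invariance of the hyperbolic derivative $F^{*}(z) := (\lambda_F/\lambda)(z)$ with a compactness argument via normal families. Given $z \in \mathbb{D}$ with $F^{*}(z) > \delta$, I would pre-compose $F$ with the automorphism $\phi \in \aut(\mathbb{D})$ sending $0 \mapsto z$ and post-compose with $\tau \in \aut(\mathbb{D})$ sending $F(z) \mapsto 0$, obtaining $G = \tau \circ F \circ \phi : \mathbb{D} \to \mathbb{D}$ with $G(0) = 0$ and $|G'(0)| = G^{*}(0) = F^{*}(z) > \delta$. Since disk automorphisms are hyperbolic isometries and the identity $|\psi'(u)| = (1-|\psi(u)|^2)/(1-|u|^2)$ for $\psi \in \aut(\mathbb{D})$ gives $G^{*}(w) = F^{*}(\phi(w))$ for all $w$, the lemma reduces to the following normalized claim: there exists $\rho = \rho(\delta) > 0$ such that every analytic $G : \mathbb{D} \to \mathbb{D}$ with $G(0)=0$ and $|G'(0)| > \delta$ is injective on $B_h(0,\rho)$ and satisfies $G^{*}(w) > \delta/2$ for all $w \in B_h(0,\rho)$.

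I would establish the normalized claim by contradiction. If it failed, one would obtain $\rho_n \to 0$ and analytic maps $G_n : \mathbb{D} \to \mathbb{D}$ with $G_n(0) = 0$ and $|G_n'(0)| > \delta$, each of which either fails to be injective on $B_h(0,\rho_n)$ or admits a point $w_n \in B_h(0,\rho_n)$ with $G_n^{*}(w_n) \leq \delta/2$. The family $\{G_n\}$ is uniformly bounded and hence normal; after passing to a subsequence, $G_n \to G$ locally uniformly on $\mathbb{D}$, and Cauchy's integral formula gives $G_n' \to G'$ locally uniformly. The limit $G$ is an analytic self-map of $\mathbb{D}$ with $G(0) = 0$ and $|G'(0)| \geq \delta > 0$, and in particular non-constant.

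In the ``small hyperbolic derivative'' case, I would note that $w_n \to 0$ because $\rho_n \to 0$, and hence
\[
G_n^{*}(w_n) \ = \ \frac{(1-|w_n|^2)\,|G_n'(w_n)|}{1-|G_n(w_n)|^2} \ \longrightarrow \ |G'(0)| \ \geq \ \delta,
\]
directly contradicting $G_n^{*}(w_n) \leq \delta/2$. In the ``non-injectivity'' case, let $a_n \neq b_n$ lie in $B_h(0,\rho_n)$ with $G_n(a_n) = G_n(b_n)$. I would introduce the two-variable holomorphic functions
$H_n(z,w) := (G_n(z) - G_n(w))/(z-w)$ on $\mathbb{D} \times \mathbb{D}$, extended by $G_n'(z)$ on the diagonal. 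These converge locally uniformly to the analogous $H$ for $G$, yet $H_n(a_n,b_n) = 0$ while $(a_n,b_n) \to (0,0)$, which forces $H(0,0) = G'(0) = 0$, contradicting $|G'(0)| \geq \delta$.

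The argument is entirely soft, and I do not anticipate a serious obstacle. The main items to verify carefully are the Möbius invariance identity for $F^{*}$ used in the reduction and the locally uniform convergence of the two-variable quotients $H_n$, both of which are standard.
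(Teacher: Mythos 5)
Your proposal is correct and follows essentially the same route as the paper: normalize via M\"obius transformations to $z=0$, $F(0)=0$, then run a normal-families compactness argument. The paper phrases the compactness step as producing the quantitative estimates $|F(w)|<\delta/10$ and $|F'(w)-F'(0)|<|F'(0)|/10$ on $B_h(0,\rho)$ (from which injectivity and the lower bound on $\lambda_F/\lambda$ follow), whereas you argue by contradiction and handle injectivity with the divided-difference function, but this is only a cosmetic difference.
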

   
   \begin{proof}
   Composing with M\"obius transformations, we may assume that $z = 0$ and $F(0) = 0$. The assumption then says that
   $|F'(0)| > \delta$. A normal families argument shows that there exists a $0 < \rho < 1/10$ so that
   $$|F(w)| < \frac{\delta}{10}, \qquad |F'(w) - F'(0)| < \frac{1}{10} \, |F'(0)|, \qquad \text{for } w \in B_{h}(0, \rho),$$ from which the assertions of the lemma follow easily. 
   \end{proof}
   
A similar argument involving normalizing $F(0) = 0$ and normal families shows:

      \begin{lemma}
         \label{lambda-lemma2}
 Let $F$ be an analytic self-mapping of the unit disk. For any $R > 0$ and $\varepsilon > 0$, there exists a constant $\delta > 0$ so that if 
  $$(\lambda_F/\lambda)(w) < \delta, \qquad w \in B_{h}(z,1/2), $$ then 
$(\lambda_F/\lambda)(w) < \varepsilon$ for any $w \in B_{h}(z,R)$.
   \end{lemma}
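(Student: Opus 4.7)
The plan is to prove the contrapositive via a normal families argument, mirroring the normalization used in Lemma \ref{lambda-lemma1}. First I would reduce to the case $z = 0$ and $F(0) = 0$: pre-compose $F$ with a M\"obius automorphism sending $0$ to $z$, and post-compose with an automorphism sending $F(z)$ to $0$. Both operations preserve $\lambda_F/\lambda$, and they turn $B_h(z,1/2)$ into $B_h(0,1/2)$ and $B_h(z,R)$ into $B_h(0,R)$.

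After this normalization I would proceed by contradiction. If the statement fails, there exist $R>0$, $\varepsilon>0$, a sequence $\delta_n \downarrow 0$, and analytic self-maps $F_n \colon \mathbb{D} \to \mathbb{D}$ with $F_n(0)=0$,
\[
(\lambda_{F_n}/\lambda)(w) < \delta_n \quad \text{for all } w \in B_h(0,1/2),
\]
and points $w_n \in B_h(0,R)$ with $(\lambda_{F_n}/\lambda)(w_n) \geq \varepsilon$. By Montel's theorem applied to the uniformly bounded family $\{F_n\}$, after passing to a subsequence I may assume $F_n \to F_\infty$ locally uniformly on $\mathbb{D}$, and $F_\infty \colon \mathbb{D} \to \overline{\mathbb{D}}$ is analytic with $F_\infty(0) = 0$.

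The key observation is that on the fixed compact set $\overline{B_h(0,1/2)}$, the factor $(1-|w|^2)$ is bounded below by a positive constant and $(1-|F_n(w)|^2) \leq 1$, so the smallness hypothesis forces $|F_n'(w)| \to 0$ uniformly there. Local uniform convergence of analytic functions implies convergence of derivatives, so $F_\infty' \equiv 0$ on $B_h(0,1/2)$. The identity theorem then yields $F_\infty' \equiv 0$ on all of $\mathbb{D}$, and since $F_\infty(0)=0$ we conclude $F_\infty \equiv 0$.

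Finally, since the $w_n$ lie in the compact set $\overline{B_h(0,R)} \subset \mathbb{D}$, I pass to a further subsequence with $w_n \to w_\infty$. Then $F_n(w_n) \to F_\infty(w_\infty) = 0$ and $F_n'(w_n) \to F_\infty'(w_\infty) = 0$, while $(1-|w_n|^2)$ remains bounded; hence $(\lambda_{F_n}/\lambda)(w_n) \to 0$, contradicting the lower bound $\varepsilon$. The argument has no real obstacle beyond bookkeeping; the only minor care point is ensuring that one deduces $|F_n'| \to 0$ from $(\lambda_{F_n}/\lambda)\to 0$ on $B_h(0,1/2)$, which is exactly where the fixed compactness of the small hyperbolic half-disk (as opposed to the potentially varying geometry near $\partial \mathbb{D}$) is used.
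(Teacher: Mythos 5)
Your proof is correct and follows essentially the same route as the paper, which disposes of this lemma by exactly the normalization $z=0$, $F(0)=0$ (using the Möbius invariance of $\lambda_F/\lambda$) followed by a normal families argument. The one point worth isolating — that $\lambda_{F_n}/\lambda \to 0$ on the fixed compact set $\overline{B_h(0,1/2)}$ forces $F_n' \to 0$ there because $1-|w|^2$ is bounded below and $1-|F_n(w)|^2 \le 1$ — is handled correctly in your write-up.
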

   
With the help of the two lemmas above, we show that if (\ref{eq:area-condition}) fails for $R = 1$, then it also fails for any given $R > 1$. Indeed, by
   Lemma \ref{lambda-lemma1}, if the hyperbolic area of $F(B_{h}(z,1))$ is small, then $ \sup \bigl \{(\lambda_F/\lambda)(w) : w \in B_h (z,1/2) \bigr \} $ is small.  
   Lemma \ref{lambda-lemma2} then implies that $(\lambda_F/\lambda)(w)$ is small on $B_{h}(z,R)$, and so
   $F(B_{h}(z,R))$ also has small hyperbolic area.

To conclude this section, we discuss two notions which on the surface resemble the APHA condition but turn out to be quite different.
We first show that automorphisms of the disk are the only analytic self-mappings of the disk for which the estimate in condition \eqref{eq:area-condition} holds for any $R>0$.

 \begin{lemma}
         \label{lambda-lemmaenyR}
  Let $F$ be an analytic self-mapping of the unit disk. Assume that there exists a constant $c>0$ such that 
  $ A_h (F(B_{h}(z, R)) \ge c\, A_h (B_{h}(z, R)),$ for any $  z \in \mathbb{D}$ and any  $R > 0$. Then $F$ is a M\"obius transformation.
   \end{lemma}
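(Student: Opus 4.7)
The plan is to exploit the hypothesis at two very different scales. At large scales ($R > 1$) the hypothesis reduces to the standard APHA condition, so by Lemma~\ref{island}, $F$ is an indestructible Blaschke product. At infinitesimal scales ($R \to 0^+$) the condition will rule out critical points of $F$. Once both facts are in hand, the conclusion follows from the classical theorem that a Blaschke product with no critical points in $\mathbb{D}$ must be a Möbius transformation.

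To rule out critical points, fix an arbitrary $z_0 \in \mathbb{D}$ and pre- and post-compose by Möbius transformations of $\mathbb{D}$ (which preserve the hypothesis) to reduce to $z_0 = 0$ and $F(0) = 0$. Expand $F(z) = a z^k + O(z^{k+1})$ with $a \neq 0$ and $k \geq 1$. For small $R > 0$, the hyperbolic ball $B_h(0, R)$ coincides with the Euclidean disk of radius $r = \tanh(R/2)$; the image $F(B_h(0, R))$ is, as a set, approximately the Euclidean disk of radius $|a| r^k$; and the hyperbolic metric has density $2$ at the origin. A direct computation then gives
\[
\frac{A_h(F(B_h(0, R)))}{A_h(B_h(0, R))} \, \sim \, |a|^2 \, r^{2k - 2}, \qquad R \to 0^+.
\]
If $k \geq 2$, this quotient tends to $0$, contradicting the hypothesis. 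Hence $k = 1$ and $F'(z_0) \neq 0$; since $z_0$ was arbitrary, $F$ has no critical points in $\mathbb{D}$.

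The main obstacle lies in the final step, namely excluding Blaschke products with no critical points other than Möbius transformations. For a finite Blaschke product of degree $n$, the derivative has exactly $n - 1$ zeros in $\mathbb{D}$, forcing $n \leq 1$. For infinite Blaschke products, one invokes the classical fact that they always carry (infinitely many) critical points, provable for instance by applying Hurwitz's theorem to the partial products together with the Blaschke condition on the critical sets of finite-entropy Blaschke products. Alternatively, one can use the APHA condition quantitatively: comparing $A_h(F(B_h(z, R)))$ with $\int_{B_h(z, R)} \lambda_F^2 \, dA$ and applying Schwarz--Pick forces the average covering multiplicity of $F$ on large hyperbolic balls to be bounded by $1/c$, which reduces the infinite case to the finite one.
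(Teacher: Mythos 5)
Your reduction to $z_0=0$, $F(0)=0$ and the computation showing that the area ratio behaves like $|a|^2 r^{2k-2}$ as $R\to 0^+$ is correct, and it does rule out critical points. But the final step rests on a false claim: it is \emph{not} true that every infinite Blaschke product carries a critical point. A concrete counterexample: let $S(z)=\exp\bigl(\tfrac{z+1}{z-1}\bigr)$ and take any $v\in\mathbb{D}\setminus\{0\}$; by Frostman's theorem the shift $B=(S-v)/(1-\bar v S)$ is an (infinite) Blaschke product, yet $B'=(1-|v|^2)S'/(1-\bar v S)^2$ never vanishes in $\mathbb{D}$ because $S'$ does not. This also shows where your Hurwitz argument breaks down: the $N-1$ critical points of the partial products can, and in this example do, escape to $\partial\mathbb{D}$, and the remark about the Blaschke condition on critical sets of finite-entropy products does not prevent this. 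Your alternative via average covering multiplicity also does not close the gap: the bound you get is on the multiplicity averaged over $F(B_h(z,R))$, whose hyperbolic area grows like $e^R$, so it does not force $F$ to have finite valence.

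The fix is to use the full strength of your own local computation rather than only its qualitative consequence. For $k=1$ the ratio tends to $|F'(z_0)|^2$ of the normalized map, i.e.\ to $(\lambda_F/\lambda)(z_0)^2$, so the hypothesis actually yields the \emph{uniform} lower bound $(\lambda_F/\lambda)(z)\ge\sqrt{c}$ for all $z\in\mathbb{D}$, not merely the nonvanishing of $F'$. This is exactly what the paper exploits: by the boundary version of Ahlfors' lemma of Kraus--Roth--Ruscheweyh \cite[Theorem 1.1]{KRR}, a self-map with $\lambda_F/\lambda$ bounded below by a positive constant extends analytically across $\partial\mathbb{D}$ and is therefore a \emph{finite} Blaschke product; only then does the absence of critical points force the degree to be $1$. (Note that for the counterexample $B$ above one has $\lambda_B/\lambda\to 0$ along the positive real axis, so it is consistent with the lemma.) Without some substitute for this uniform-lower-bound-to-finiteness step, the proposal does not prove the statement.
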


\begin{proof}
The assumption implies that $(\lambda_F/\lambda)(z) \gtrsim c$ is bounded below on the unit disk by a positive constant.
By \cite[Theorem 1.1]{KRR}, $F$ extends analytically past the unit circle and so is a finite Blaschke product.
Since $F$ cannot have critical points, it must be a M\"obius transformation.
\end{proof}

We now give an example of an inner function which is not an APHA mapping but almost preserves hyperbolic area if the image is counted with multiplicity, i.e.~
\begin{equation}
\label{eq:area-condition-with-multiplicity}
\int_{B_{h}(z,R)} \frac{4 |F'(z)|^2}{(1-|F(z)|^2)^2} \, dA(z) \ge c  \, A_h (B_{h}(z, R)), \quad z \in \mathbb{D}, \quad R > 1,
\end{equation}
for some $c > 0$.

Let $\Gamma$ be any co-compact Fuchsian group acting on the unit disk. It follows from the solution of the Schwarz-Picard problem given in \cite[Theorem 21.1]{heins} that one can find a maximal Blaschke product $F$ whose critical set is an orbit of $\Gamma$. Since the critical set determines a maximal Blaschke product up to post-composition with an element of $\aut(\mathbb{D})$, $F$ satisfies the automorphy relation
$$F(\gamma(z)) = m_\gamma(F(z)),$$ for some character $\chi: \Gamma \to \aut(\mathbb{D})$. In particular, the hyperbolic area of the image of any disk of hyperbolic radius 1 is bounded below. A fortiori, $F$ satisfies (\ref{eq:area-condition-with-multiplicity}).
However, since the critical points of $F$ do not satisfy the Blaschke condition, $F$ does not have finite entropy and is therefore not an APHA mapping, see Condition (4) of Theorem \ref{main-thm}.

\section{Almost Isometric Rays}

\subsection{Almost isometric and Good Geodesic Rays}

Let $F$ be an analytic self-mapping of the unit disk. For a geodesic ray $\gamma = [z, \xi)$ emanating from $z \in \mathbb{D}$ and ending at $\xi \in \partial \mathbb{D}$, the {\em lag function}
\begin{equation}
    \label{lag}
L(w) = d_h(z, w) - d_h(F(z), F(w)), \qquad w \in \gamma, 
\end{equation}
measures the contraction of the hyperbolic metric along $\gamma$.
From the Schwarz lemma and the triangle inequality, it is clear that $L(w)$ is non-negative and increases as $d_h(z, w)$ increases.
We define
$$
L(\xi) := \sup_{w \in \gamma} \bigl \{ d_h(z, w) - d_h(F(z), F(w)) \bigr \},
$$
which may be infinite.

We say that $\gamma$ is a $C$-{\em almost isometric} ray if the lag function of $\gamma$ is bounded by $C$, or equivalently, if $L(\xi) \le C$. In \cite[Theorem 3.2]{GP}, Garnett and Papadimitriakis observed that for any almost isometric ray,
$F$ possesses an angular derivative at $\xi$ in the sense of Carath\'eodory. In particular, the radial boundary value $F(\xi) = \lim_{w \in \gamma, \, w \to \xi } F(w)$ exists and belongs to the unit circle.

One may also consider the {\em radial lag function} of $\gamma$ given by
\begin{equation}
    \label{rlag}
L^{\rad}(w) = d_h(z, w) - \bigl ( d_h(0, F(w)) - d_h(0, F(z)) \bigr ), \qquad w \in \gamma. 
\end{equation}
The same reasoning as above shows that $L^{\rad}(w)$ increases as $d_h(z, w)$ increases. 
We say that the geodesic ray $\gamma$ is $C$-{\em good} if 
$$
L^{\rad}(\xi) \, := \, \sup_{w \in \gamma} \bigl \{ d_h(z, w) -  \bigl ( d_h(0, F(w)) - d_h(0, F(z) \bigr ) \bigr \} \, \leq \, C.
$$
In other words, as one moves along $\gamma$ at unit hyperbolic speed, the image point efficiently moves towards the unit circle. By the triangle inequality, $L^{\rad}(w) \ge L(w)$, so a $C$-good ray is also a $C$-almost isometric ray. To understand the extent as to which the converse holds, we use the following lemma:

\begin{lemma}
\label{lag-and-rlag}
Let $F$ be an analytic self-mapping of the unit disk, $z \in \mathbb{D}$ be a point in the unit disk and  $I_z$ be the arc on the unit circle centered at $z/|z|$ with $ m (I_z) = 1-|z|$. Then,
 \begin{equation}
\label{eq:lag-and-rlag}
\log |F'(\xi)| - \log \frac{1-|F(z)|}{1-|z|} = L^{\rad}(\xi) + O(1), \qquad \xi \in I_z, \quad |F'(\xi)| < \infty, 
\end{equation}
and
 \begin{equation}
\label{eq:lag-and-rlag2}
L^{\rad}(\xi) = L(\xi) + 2 \log k(\xi) + O(1), \qquad \xi \in I_z, \quad |F'(\xi)| < \infty,
\end{equation}
where $L$ and $L^{\rad}$ are the lag functions associated to the geodesic ray $[z, \xi)$ and $k(\xi)$ is the smallest real number $\ge 1$ such that  $F(\xi) \in k  I_{F(z)}$.
\end{lemma}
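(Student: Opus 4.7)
The plan is to expand every hyperbolic distance appearing in $L$ and $L^{\rad}$ via the identity
$$d_h(a,b) \,=\, \log\frac{|1-\bar a b|^2}{(1-|a|^2)(1-|b|^2)} + 2\log(1+|\phi_a(b)|),$$
where $\phi_a(b) = (b-a)/(1-\bar a b)$; since the last term lies in $[0,2\log 2]$ it only contributes an $O(1)$ error. Since hyperbolic geodesics meet $\partial\mathbb{D}$ perpendicularly, a point $w$ traveling along $\gamma = [z,\xi)$ approaches $\xi$ non-tangentially, so when $|F'(\xi)| < \infty$ the Julia--Carathéodory theorem guarantees both $F(w) \to F(\xi) \in \partial \mathbb{D}$ and $(1-|F(w)|)/(1-|w|) \to |F'(\xi)|$.

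To prove \eqref{eq:lag-and-rlag}, I would substitute the expansion into \eqref{rlag}. All the $w$-dependence collapses into $\log((1-|F(w)|^2)/(1-|w|^2))$, which tends to $\log|F'(\xi)|$, leaving the $w$-independent pieces $2\log|1-\bar z\xi| - \log(1-|z|^2) + d_h(0,F(z)) + O(1)$. The hypothesis $\xi \in I_z$ yields $|1-\bar z\xi| \asymp 1-|z|$ (from $|1-\bar z\xi|^2 = (1-|z|)^2 + 2|z|(1-\cos\theta)$ and $|\theta| \lesssim 1-|z|$), while $-\log(1-|z|^2) = -\log(1-|z|) + O(1)$ and $d_h(0,F(z)) = -\log(1-|F(z)|) + O(1)$. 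Summing these three collapses them into $-\log((1-|F(z)|)/(1-|z|)) + O(1)$, which rearranges to \eqref{eq:lag-and-rlag}.

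For \eqref{eq:lag-and-rlag2}, subtracting \eqref{lag} from \eqref{rlag} gives
$$L^{\rad}(w) - L(w) = d_h(F(z),F(w)) - d_h(0,F(w)) + d_h(0,F(z)).$$
Expanding again, the $-\log(1-|F(w)|^2)$ contributions cancel and the $\log(1+|\phi|)$ corrections stay bounded as $w \to \xi$, leaving
$$L^{\rad}(\xi) - L(\xi) = 2\log\frac{|1-\overline{F(z)}F(\xi)|}{1-|F(z)|^2} + O(1).$$
Letting $\theta$ denote the angular distance between $F(\xi)$ and $F(z)/|F(z)|$, the same algebra as above shows $|1-\overline{F(z)}F(\xi)|^2 \asymp (1-|F(z)|)^2 + \theta^2$, so the ratio is comparable to $1 + \theta/(1-|F(z)|)$. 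By definition $k(\xi) = 1$ when $\theta \lesssim 1-|F(z)|$ and $k(\xi) \asymp \theta/(1-|F(z)|)$ otherwise, so in either case the ratio is comparable to $k(\xi)$; applying $2\log$ yields \eqref{eq:lag-and-rlag2}.

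The argument is essentially bookkeeping. The only conceptual inputs are the non-tangential form of the Julia--Carathéodory theorem, which applies directly because geodesic approach to $\xi$ is perpendicular, and the elementary geometric comparison $|1-\bar a b| \asymp (1-|a|) + |\arg(b \cdot \overline{a/|a|})|$ for $a \in \mathbb{D}$, $b \in \partial\mathbb{D}$. The main obstacle, such as it is, will be verifying that the various $O(1)$ errors remain genuinely uniform as $w \to \xi$, which reduces to observing that the only terms that could a priori grow are those already captured by $\log|F'(\xi)|$, $\log((1-|F(z)|)/(1-|z|))$, and $2\log k(\xi)$.
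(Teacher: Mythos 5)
Your proof is correct and follows essentially the same route as the paper: expand each hyperbolic distance to within $O(1)$ of explicit logarithmic quantities, use $\xi \in I_z$ (respectively the arc $kI_{F(z)}$) to control the cross term $|1-\bar z\xi|$ (respectively $|1-\overline{F(z)}F(\xi)|$), and pass to the limit along the geodesic via the Carath\'eodory angular derivative. The only cosmetic difference is that you work from the closed-form identity for $d_h(a,b)$ in terms of $|1-\bar a b|$, whereas the paper packages the same estimates as geodesic decompositions through the point closest to the origin (e.g.\ $d_h(z_1,z_2)=d_h(|z_1|,|z_0|)+d_h(|z_0|,|z_2|)+O(1)$); both yield identical bookkeeping.
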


In the off-chance that $z=0$, we take $I_z$ to be the unit circle $\partial \mathbb{D}$, while if $F(z) = 0$, then we take $I_{F(z)} = \partial \mathbb{D}$ and $k = 1$.

\begin{proof}
Fix $\xi \in I_z$. For a point $w$ on the geodesic ray $[z,\xi)$, we have
$$
d_h(0, w) = d_h(0,z) + d_h(z, w) - O(1).
$$
Taking the difference of the equations
$$
\log \frac{1-|F(w)|}{1-|w|} + O(1) \, = \, d_h(0, w) - d_h(0, F(w)) + O(1)
$$
and 
$$
\log \frac{1-|F(z)|}{1-|z|} + O(1) \, = \, d_h(0, z) - d_h(0, F(z)) + O(1),
$$
we get
$$
\log \frac{1-|F(w)|}{1-|w|}  - \log \frac{1-|F(z)|}{1-|z|} = L^{\rad}(w) + O(1).
$$
The estimate (\ref{eq:lag-and-rlag}) now follows after taking the limit as $w \to \xi$ along $\gamma$ and the definition of the angular derivative.

The hyperbolic distance between two points $z_1, z_2 \in \mathbb{D}$ is
$$
d_h(z_1, z_2) = d_h(|z_1|, |z_0|) + d_h(|z_0|, |z_2|) + O(1),
$$
where $z_0$ is the point on the geodesic segment $[z_1,z_2]$ that is closest to the origin.
Applying this to the current situation, we see that if $w$ is a point on the geodesic ray $[z, \xi)$ close to $\xi$, then
\begin{align*}
d_h(F(z), F(w)) & = \log k + \bigl \{ \log k + d_h(|F(z)|, |F(w)|) \bigr \} + O(1) \\
& = 2\log k + \bigl \{ d_h (0, |F(w)|) - d_h(0,|F(z)|) \bigr \} + O(1),
\end{align*}
which leads to (\ref{eq:lag-and-rlag2}) after some algebraic manipulation and taking $w \to \xi$.
\end{proof}

We now record two simple consequences of Lemma \ref{lag-and-rlag} that will be used throughout the paper.
Corollary \ref{a-lower-bound-angular-derviative} below gives a lower bound for the angular derivative at a point $\xi \in I_z$, while Corollary \ref{angularderivativegoodgeodesicrays} says that this lower bound is essentially sharp when $[z,\xi)$ is a good geodesic.

\begin{corollary}
\label{a-lower-bound-angular-derviative}
Let $F$ be an analytic self-mapping of the unit disk, $z \in \mathbb{D} \setminus \{ 0 \}$ be a point in the unit disk and  $I_z$ be the arc on the unit circle centered at $z/|z|$ with $ m (I_z) = 1-|z|$. Then,
\begin{equation}
\label{eq:a-lower-bound-angular-derviative}
 |F'(\xi)| \ge C \cdot \frac{1-|F(z)|}{1-|z|}, \quad \xi \in I_z,
\end{equation}
where $C$ is a universal constant.
\end{corollary}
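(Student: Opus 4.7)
The plan is to deduce this corollary directly from equation \eqref{eq:lag-and-rlag} of Lemma \ref{lag-and-rlag}. That lemma tells us that for $\xi \in I_z$ with $|F'(\xi)| < \infty$, we have
$$
\log |F'(\xi)| \, = \, \log \frac{1-|F(z)|}{1-|z|} + L^{\rad}(\xi) + O(1),
$$
so the inequality will follow if we can verify that $L^{\rad}(\xi) \ge 0$, with an implicit constant that does not depend on $F$, $z$ or $\xi$.

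First I would dispose of the trivial case where $|F'(\xi)| = \infty$, in which the inequality \eqref{eq:a-lower-bound-angular-derviative} holds by the convention set up in the introduction. Assuming therefore that $|F'(\xi)| < \infty$, I would argue that $L^{\rad}(w) \ge 0$ for every $w$ on the geodesic ray $[z, \xi)$. Indeed, by the Schwarz lemma $d_h(F(z), F(w)) \le d_h(z,w)$, and by the triangle inequality $d_h(0, F(w)) - d_h(0, F(z)) \le d_h(F(z), F(w))$, so
$$
L^{\rad}(w) \, = \, d_h(z,w) - \bigl ( d_h(0, F(w)) - d_h(0, F(z)) \bigr ) \, \geq \, 0.
$$
Taking the limit as $w \to \xi$ along $\gamma$ then gives $L^{\rad}(\xi) \ge 0$.

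Substituting this back into \eqref{eq:lag-and-rlag}, I obtain
$$
\log |F'(\xi)| \, \ge \, \log \frac{1-|F(z)|}{1-|z|} - M,
$$
for a universal constant $M > 0$, which is equivalent to the desired bound with $C = e^{-M}$. There is no real obstacle here: once Lemma \ref{lag-and-rlag} is in place, the only thing to check is the non-negativity of $L^{\rad}$, and this is immediate from the Schwarz lemma together with the triangle inequality. All the substantive work sits inside the proof of Lemma \ref{lag-and-rlag} itself.
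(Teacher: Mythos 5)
Your argument is correct and is exactly the route the paper intends: Corollary \ref{a-lower-bound-angular-derviative} is stated there as an immediate consequence of \eqref{eq:lag-and-rlag} together with the non-negativity of $L^{\rad}$, which the paper has already noted follows from the Schwarz lemma and the triangle inequality. Your handling of the degenerate case $|F'(\xi)| = \infty$ via the convention from the introduction is also the right (and only) extra point to check.
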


\begin{corollary}
\label{angularderivativegoodgeodesicrays}
Let $F$ be an analytic self-mapping of the unit disk and suppose that $z \in \mathbb{D} \setminus \{ 0\}$ and $\xi \in I_z$.
The following statements are equivalent:
\begin{enumerate}
\item[{\em (1)}] The geodesic ray $[z, \xi)$ is $C$-good for some constant $C > 0$.

\item[{\em (2)}] There exists a constant $C_1 \ge 1$ such that
\begin{equation}
    \label{nouu}
     |F'(\xi)| \le C_1 \cdot \frac{1-|F(z)|}{1-|z|}.
\end{equation}

\item[{\em (3)}] The  geodesic ray $[z, \xi)$ is $C_2$-almost isometric and $F(\xi) \in K  I_{F(z)}$ for some constants $C_2, K > 0$.
\end{enumerate}
If one uses the optimal constants for $C, C_1, C_2$ and $K$, then $C$, $\log C_1$ and $C_2 + 2 \log^+ K$ differ by at most a universal constant.
\end{corollary}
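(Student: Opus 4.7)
The plan is to derive all three equivalences and the comparison of optimal constants directly from the two identities in Lemma \ref{lag-and-rlag}; no additional machinery is required. First I would observe that each of the conditions (1)--(3) implicitly or explicitly forces $|F'(\xi)| < \infty$ (for (1) and (3) this is the Garnett--Papadimitrakis theorem cited above), so the hypotheses of Lemma \ref{lag-and-rlag} are met throughout. It is convenient to identify the ``optimal'' constants with the intrinsic quantities attached to the geodesic ray $[z,\xi)$: namely $C^{*} = L^{\rad}(\xi)$, $\log C_{1}^{*} = \log |F'(\xi)| - \log \frac{1-|F(z)|}{1-|z|}$, $C_{2}^{*} = L(\xi)$ and $K^{*} = k(\xi)$ (where $K^{*} \ge 1$ by definition of $k$).

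For (1) $\Leftrightarrow$ (2), I would simply read off \eqref{eq:lag-and-rlag}, which asserts $\log C_{1}^{*} = C^{*} + O(1)$ with a universal implicit constant; both implications and the estimate $|C - \log C_{1}| = O(1)$ on the optimal constants follow at once. For (1) $\Leftrightarrow$ (3) I would use \eqref{eq:lag-and-rlag2}, i.e.\ $L^{\rad}(\xi) = L(\xi) + 2\log k(\xi) + O(1)$. The crucial point is that both summands on the right are non-negative: $L(\xi) \ge 0$ by the Schwarz lemma, and $\log k(\xi) \ge 0$ since $k(\xi) \ge 1$. Hence an upper bound $L^{\rad}(\xi) \le C$ forces $L(\xi) \le C + O(1)$ and $\log k(\xi) \le C/2 + O(1)$ individually, giving (3) with $C_{2} + 2 \log^{+} K \le 2C + O(1)$. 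Conversely, plugging the bounds from (3) into \eqref{eq:lag-and-rlag2} gives $L^{\rad}(\xi) \le C_{2} + 2 \log^{+} K + O(1)$, where $\log K = \log^{+} K$ since $K \ge 1$.

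There is no real obstacle here: the corollary amounts to unpacking Lemma \ref{lag-and-rlag} together with the two non-negativity facts above. The only point requiring a little care is the normalization $K \ge 1$, which is needed to replace $\log K$ by $\log^{+} K$; the requirement is natural because $k(\xi) \ge 1$ forces any admissible $K$ in (3) to satisfy $K \ge 1$ as well.
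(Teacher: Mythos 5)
Your proof is correct and is exactly the argument the paper intends: the corollary is stated there as an immediate consequence of Lemma \ref{lag-and-rlag} (with no written proof), and your unpacking of \eqref{eq:lag-and-rlag} and \eqref{eq:lag-and-rlag2}, together with the non-negativity of $L(\xi)$ and $\log k(\xi)$ and the Garnett--Papadimitrakis theorem guaranteeing $|F'(\xi)|<\infty$, is precisely the intended reasoning. One negligible imprecision: an admissible $K$ in (3) need not satisfy $K \ge 1$ (e.g.\ if $F(\xi)$ is the center of $I_{F(z)}$), but since $\log k(\xi) \le \log^{+} K$ in every case, this does not affect the argument.
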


\subsection{Counting pre-images}

Before continuing, we recall a well-known inequality due to Littlewood:

\begin{lemma}
\label{littlewood}
Let $F$ be an analytic self-mapping of the unit disk with $F(0) = 0$. For any $v \in \mathbb{D} \setminus \{0\}$, we have
$$
\sum_{F(u) = v} \log \frac{1}{|u|}  \le \log \frac{1}{|v|}.
$$
In particular, there exists a universal constant $C>0$ such that 
\begin{equation}
\label{eq:littlewood}
\sum_{F(u) = v} e^{- d_h (0, u)} \leq C e^{- d_h (0, v)}, \qquad d_h(0, v) > 1.
\end{equation}
\end{lemma}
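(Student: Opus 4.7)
The plan is to deduce the first inequality from Jensen's formula applied to a suitable auxiliary self-map of the disk, and then derive the hyperbolic formulation by elementary comparisons between $\log(1/|w|)$, $1-|w|$ and $e^{-d_h(0,w)}$.

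For the first inequality, I would introduce the Möbius transformation $\tau_v(w) = (w-v)/(1-\bar v w)$ and set $G = \tau_v \circ F$. Since $F$ is an analytic self-map of $\mathbb{D}$, so is $G$, and the zeros of $G$ inside $\mathbb{D}$ are exactly the $F$-preimages of $v$ (counted with multiplicity). Because $F(0)=0$, we also have $G(0)=-v$, so $|G(0)|=|v|$. For $0<r<1$ with $G$ nonvanishing on $|z|=r$, Jensen's formula reads
\begin{equation*}
\log|G(0)| \;=\; \sum_{G(u)=0,\, |u|<r}\log\frac{|u|}{r} \;+\; \int_0^{2\pi}\log|G(re^{i\theta})|\,\frac{d\theta}{2\pi}.
\end{equation*}
Since $|G|\le 1$, the integral on the right is $\le 0$. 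Letting $r\to 1$ and using monotone convergence on the sum yields
\begin{equation*}
\sum_{F(u)=v}\log\frac{1}{|u|} \;\le\; -\log|G(0)| \;=\; \log\frac{1}{|v|},
\end{equation*}
which is the first claim.

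For the hyperbolic version, I would use the elementary chain $e^{-d_h(0,w)} = (1-|w|)/(1+|w|) \le 1-|w| \le \log(1/|w|)$, valid for $w\in\mathbb{D}\setminus\{0\}$ (the last inequality being $-\log x\ge 1-x$ for $x\in(0,1]$). Summing over $u$ with $F(u)=v$ and applying the first part gives
\begin{equation*}
\sum_{F(u)=v} e^{-d_h(0,u)} \;\le\; \sum_{F(u)=v}\log\frac{1}{|u|} \;\le\; \log\frac{1}{|v|}.
\end{equation*}
It remains to compare $\log(1/|v|)$ to $e^{-d_h(0,v)}$ under the hypothesis $d_h(0,v)>1$. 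This hypothesis forces $|v|\ge (e-1)/(e+1)$, in particular $|v|$ is bounded away from $0$, so $\log(1/|v|) \le C_0(1-|v|)$ for a universal constant $C_0$, and $1-|v|\le 2(1-|v|)/(1+|v|) = 2e^{-d_h(0,v)}$. Chaining these two bounds gives \eqref{eq:littlewood} with $C=2C_0$.

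There is no real obstacle in this argument; the only point requiring mild care is the passage $r\to 1$ in Jensen's formula. One should note that $|G|\le 1$ makes $\log|G(re^{i\theta})|$ nonpositive, so the limiting integral term can only decrease (or remain equal to) zero, which is precisely the direction needed for the stated inequality. Everything else is pointwise elementary calculus comparisons.
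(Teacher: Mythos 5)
Your proof is correct: the paper does not prove this lemma itself but simply cites Shapiro's book, and your argument (Jensen's formula applied to $\tau_v\circ F$, followed by the elementary comparisons $e^{-d_h(0,w)}=\tfrac{1-|w|}{1+|w|}\le 1-|w|\le \log\tfrac{1}{|w|}$ and the reverse bounds when $|v|$ is bounded below) is the standard derivation of Littlewood's inequality that the cited reference contains. All steps, including the passage $r\to 1$ and the reduction $|v|\ge (e-1)/(e+1)$ from $d_h(0,v)>1$, check out.
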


For a proof, see  \cite[Section 10.4]{shapiro}. M\"obius invariance of hyperbolic distance shows:

\begin{corollary}
\label{counting-preimages}
There exists a universal constant $C>0$ such that for any analytic self-mapping $F$ of the unit disk and any pair of points $u_0, v \in \mathbb{D}$ with 
$d_h(F(u_0), v) > 1$, we have 
\begin{equation}
\label{eq:counting-preimages}
\sum_{F(u) = v} e^{- d_h (u, u_0)} \leq C e^{- d_h (v, F(u_0))}. 
\end{equation}
\end{corollary}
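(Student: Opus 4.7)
The plan is to reduce the statement to Lemma \ref{littlewood} by a Möbius change of variables. Choose $\phi, \psi \in \aut(\mathbb{D})$ with $\phi(u_0) = 0$ and $\psi(F(u_0)) = 0$, and set $G := \psi \circ F \circ \phi^{-1}$, which is an analytic self-map of the disk fixing the origin. The substitution $u' := \phi(u)$ is a bijection between the pre-image sets $\{u : F(u) = v\}$ and $\{u' : G(u') = \psi(v)\}$, while the Möbius invariance of the hyperbolic metric gives $d_h(u, u_0) = d_h(u', 0)$ and $d_h(v, F(u_0)) = d_h(\psi(v), 0)$. Thus it suffices to prove the estimate in the normalized setting where $u_0 = 0$, $F(0) = 0$, and the hypothesis reads $d_h(0, v) > 1$, equivalently $|v| \geq (e-1)/(e+1) =: c_0$.

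In this normalized setting, Lemma \ref{littlewood} applied at $v$ (which is nonzero) yields
$$
\sum_{F(u) = v} \log \frac{1}{|u|} \ \leq \ \log \frac{1}{|v|}.
$$
To convert this into \eqref{eq:counting-preimages} I only need to calibrate $\log(1/|t|)$ against $e^{-d_h(0,t)} = (1-|t|)/(1+|t|)$. A termwise power series comparison using $-\log(1-u) = \sum u^k/k$ and $u/(2-u) = \sum u^k/2^k$ gives the universal inequality $\log(1/t) \geq (1-t)/(1+t)$ for every $t \in (0,1]$; summing this termwise converts the left-hand side into $\sum e^{-d_h(0,u)}$. Conversely, on the compact interval $[c_0, 1]$ the ratio $\log(1/t) \cdot (1+t)/(1-t)$ is bounded above by some absolute constant $C$ (the ratio extends continuously to $2$ at $t=1$ and is finite at $t = c_0$), which handles the right-hand side.

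Since the argument is just a change of variables followed by two elementary one-variable comparisons, there is no substantive obstacle. The only point worth flagging is why the hypothesis $d_h(F(u_0), v) > 1$ is needed at all: it is what forces $|\psi(v)|$ to stay bounded away from $0$, so that the upper calibration $\log(1/|t|) \leq C \cdot e^{-d_h(0,t)}$ is available on the right-hand side. Without such a hypothesis, $\log(1/|t|)$ would blow up near $t=0$ while $e^{-d_h(0,t)}$ would stay bounded by $1$, so no universal constant $C$ could work.
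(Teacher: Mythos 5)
Your proposal is correct and follows essentially the same route as the paper, which deduces the corollary from Lemma \ref{littlewood} purely by Möbius invariance of the hyperbolic distance. The only extra content you supply is the elementary calibration of $\log(1/t)$ against $e^{-d_h(0,t)}=(1-t)/(1+t)$, which the paper absorbs into the ``In particular'' clause of Lemma \ref{littlewood}; your verification of it is accurate.
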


\begin{remark}
A little thought shows that the estimate (\ref{eq:littlewood}) holds for all $v \in \mathbb{D}$ with a constant $C = C_F$ that depends on the number of zeros of $F$ in $B_h(0,1)$, counted with multiplicity. Similarly, (\ref{eq:counting-preimages}) holds for all $v \in \mathbb{D}$ with a constant $C = C_F$ that depends on the number of times $F$ attains the value $F(u_0)$ in $B_h(F(u_0), 1)$.
\end{remark}

\subsection{Equivalence of (1) and (2) in Theorem \ref{main-thm}}

We now relate the area condition (\ref{eq:area-condition}) to the existence of good geodesic rays, thereby showing the equivalence of Statements (1) and (2) in Theorem \ref{main-thm}. To this end, we show the following lemma:

\begin{lemma}
\label{condition2}
Let $F$ be an inner function. The following conditions are equivalent:

{\em (i)} There exists a constant $c>0$ such that $A_h (F(B_h (z,R) ) > c\, A_h (B_h (z, R))$ for any 
$z \in \mathbb{D}$ and $R>1$.

{\em (ii)} There exist constants $C, \delta > 0$ such that for any $z \in \mathbb{D}$, we have
$$
\omega \bigl(z,  \{ \xi \in \partial \mathbb{D} : [z, \xi) \text{ is a $C$-almost isometric ray}\},  \mathbb{D} \bigr) \ge \delta.
$$

{\em (iii)} There exist constants $C, \delta > 0$ such that for any $z \in \mathbb{D}$, we have
$$
\omega \biggl( z, \biggl \{ \xi \in \partial \mathbb{D} : |F'(\xi)| < C \cdot \frac{1-|F(z)|}{1-|z|} \biggr \} ,  \mathbb{D} \biggr) \ge \delta.
$$

The constants $C,\delta$ depend only on $c$ and vice versa.
\end{lemma}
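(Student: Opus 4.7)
I would split the argument into two parts: the equivalence (ii) $\Leftrightarrow$ (iii), which follows from Corollary \ref{angularderivativegoodgeodesicrays} and L\"owner's lemma, and the equivalence (i) $\Leftrightarrow$ (ii), which is the geometric core. For (ii) $\Leftrightarrow$ (iii), Corollary \ref{angularderivativegoodgeodesicrays} identifies, for $\xi \in I_z$, the $C$-goodness of $[z,\xi)$ with both the angular derivative bound in (iii) and with $C$-almost isometry together with $F(\xi) \in K I_{F(z)}$ for some $K$. A direct Poisson-kernel computation shows that $\omega(F(z), K I_{F(z)})$ and $\omega(z, K' I_z)$ both tend to $1$ uniformly in the base-point as $K, K' \to \infty$, and L\"owner's lemma rewrites the first as $\omega(z, F^{-1}(K I_{F(z)}))$. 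Intersecting the ``good sets'' in (ii) and (iii) with $F^{-1}(K I_{F(z)}) \cap K' I_z$ for large $K, K'$ reduces the problem to $\xi \in K' I_z$, and a mild extension of Corollary \ref{angularderivativegoodgeodesicrays} with constants depending on $K'$ converts between the two conditions.

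For (i) $\Rightarrow$ (ii), I would fix $z$ and set
$$E_R(C) = \{\xi \in \partial \mathbb{D} : L^\xi(R) \leq C\},$$
where $L^\xi(R) = R - d_h(F(z), F(u_R^\xi))$ and $u_R^\xi \in [z,\xi)$ is at hyperbolic distance $R$ from $z$. Monotonicity of the lag along rays makes $E_R(C)$ nested decreasing in $R$ with $\bigcap_R E_R(C)$ equal to the set of $C$-almost isometric rays, so by continuity of measure it is enough to prove a uniform bound $\omega(z, E_R(C)) \geq \delta$. The area condition supplies
$$A_h\bigl( F(B_h(z,R)) \setminus B_h(F(z), R-C) \bigr) \geq (c/2) A_h(B_h(z,R))$$
once $C$ is large enough that $A_h(B_h(F(z), R-C))/A_h(B_h(z,R)) \leq e^{-C} < c/2$, and by Schwarz-Pick the preimage $V_R := \{u \in B_h(z,R) : d_h(F(z), F(u)) \geq R - C\}$ has at least the same hyperbolic area. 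Since $V_R$ lies in the annulus $B_h(z,R) \setminus B_h(z, R-C)$ and the lag is non-decreasing and $2$-Lipschitz in $r$ along each ray (by Schwarz-Pick and the triangle inequality), the radial extent of $V_R$ on the ray of angular coordinate $\theta$ is bounded by $\max(0, 2C - L(R,\theta))$ and vanishes for $L(R,\theta) > 2C$. Integrating radially and comparing with the area lower bound yields a positive lower bound on the Lebesgue measure of $\{\theta : L(R,\theta) \leq 2C\}$, and since the ray correspondence $\xi \mapsto u_R^\xi$ identifies harmonic measure from $z$ on $\partial \mathbb{D}$ with normalized Lebesgue measure in this angular coordinate, this is exactly the required bound $\omega(z, E_R(2C)) \geq \delta$.

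For (ii) $\Rightarrow$ (i), I would lower bound $A_h(F(B_h(z,R)))$ by the total hyperbolic area of a family of small disks $B_h(F(u_{R-1}^\xi), \rho)$ indexed by $\xi$ in the $C$-almost isometric set. Each such disk lies in $F(B_h(z,R))$ because, by the AI property and Lemma \ref{lambda-lemma1}, $F$ is injective and nearly isometric on $B_h(u_{R-1}^\xi, \rho)$; and by L\"owner the endpoints $F(\xi)$ carry harmonic measure at least $\delta$ from $F(z)$, so via the ray correspondence the centers $F(u_{R-1}^\xi)$ are spread out along $\partial B_h(F(z), R-1)$ with hyperbolic arclength proportional to $\delta \sinh(R-1)$. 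The main obstacle I anticipate is the radial-integration step in (i) $\Rightarrow$ (ii), passing from the area lower bound on $V_R$ to the angular Lebesgue lower bound on $\{\theta : L(R,\theta) \leq 2C\}$, which relies essentially on both the non-decreasing and the 2-Lipschitz properties of the lag function along rays.
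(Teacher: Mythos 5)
Your treatment of (i) $\Rightarrow$ (ii) and of (ii) $\Leftrightarrow$ (iii) is correct and is essentially the paper's argument (the paper also intersects the almost-isometric set with $F^{-1}(K I_{F(z)})$ via L\"owner and invokes Corollary \ref{angularderivativegoodgeodesicrays}; your radial-integration step is a more explicit version of the paper's annulus/area count for the sets $A(z,R)$). The problem is in (ii) $\Rightarrow$ (i), where your key quantitative claim --- that the centers $F(u_{R-1}^\xi)$ are ``spread out along $\partial B_h(F(z),R-1)$ with hyperbolic arclength proportional to $\delta \sinh(R-1)$'' --- does not follow from L\"owner's lemma. L\"owner controls the harmonic measure of the boundary set $F(E)$ seen from $F(z)$, i.e.\ the distribution of the \emph{endpoints} $F(\xi)$; it says nothing directly about the spatial distribution of the interior points $F(u_{R-1}^\xi)$. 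The image curves $F([z,\xi))$ for distinct $\xi$ in the almost-isometric set can a priori pass through the same point of $\partial B_h(F(z),R-1)$ before fanning out to a boundary set of large harmonic measure, in which case the union of your small disks collapses. This is exactly the multiplicity problem, and some input beyond the Schwarz lemma is needed to control it. The paper's route is to bound the hyperbolic area of $F(\mathcal K' \cap B_h(z,R))$ counted with multiplicity from below and then cap the number of collisions using Littlewood's inequality (Corollary \ref{counting-preimages}): if $u_1,\dots,u_N$ on $C$-almost isometric rays satisfy $F(u_i)=v$, then $d_h(z,u_i) \le d_h(F(z),v)+C$, and the counting bound forces $N \lesssim e^{C}$. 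Your disk-packing picture can be salvaged, but only by adding a genuinely new ingredient --- e.g.\ observing that the image of a $C$-almost isometric ray is a $(1,C)$-quasi-geodesic, hence fellow-travels the geodesic $[F(z),F(\xi))$ by the Morse lemma, followed by a Vitali-type separation argument --- none of which is in the proposal.

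A secondary, fixable issue in the same direction: Lemma \ref{lambda-lemma1} requires a pointwise lower bound on $(\lambda_F/\lambda)(u_{R-1}^\xi)$, which the almost-isometric property does not supply at a prescribed radius; it only bounds the total hyperbolic length of the set on the ray where $\lambda_F/\lambda \le 1/2$ by $2C$. You must therefore move the center to a nearby point of the ray where $\lambda_F/\lambda > 1/2$ (this is the role of the set $\mathcal K'$ in the paper's proof) before the local injectivity and bi-Lipschitz control become available.
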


\begin{proof}
 (i) $\Rightarrow$ (ii).  
Recall that the hyperbolic area of a disk grows exponentially in the hyperbolic radius,
 $$
 A_h (B_{h}(z,R)) \sim \pi e^{R}, \qquad \text{as }R \to \infty , 
 $$
uniformly in $z \in \mathbb{D}$. We choose $C = 1 + \log(1/c)$ so that
 $$
 A_h (B_{h}(z,R-C)) \le (c/2) A_h (B_{h}(z,R)), \quad z \in \mathbb{D},
 $$
 for any $R > R_0$ sufficiently large.
 
For a geodesic ray $[z, \xi)$, $\xi \in \partial \mathbb{D}$, we write $z_{\xi, R} \in [z, \xi)$ for the point with $d_h(z, z_{\xi, R}) = R$. Consider the set $A(z,R)$ of points $\xi \in \partial \mathbb{D}$ such that 
$$d_h(F(z), F(z_{\xi, R})) \geq R-C .$$ 
Assumption (i) and the Schwarz lemma imply that $\omega (z, A(z,R), \mathbb{D} ) \gtrsim c$. As the sets $A(z,R)$ are decreasing in $R$, their intersection $A(z) = \bigcap_{R > 1} A(z,R)$ also satisfies $\omega (z, A(z), \mathbb{D} ) \gtrsim c$. Since all the geodesic rays $[z,\xi)$ with $\xi \in A(z)$ are $C$-almost isometric, (ii) holds. 
 
\medskip

 (ii) $\Rightarrow$ (i). 
Define $\GP(z)$ as the set of points $\xi \in \partial \mathbb{D}$ for which the geodesic ray $[z, \xi)$ is $C$-almost isometric. 
Let $\mathcal K \subset \mathbb{D}$ be the union of geodesic rays $[z,\xi)$ with $\xi \in \GP (z)$ and  $\mathcal K' \subset \mathcal K$
be the subset of points $w \in \mathcal{K}$ with $|(\lambda_F/\lambda)(w)| > 1/2$. Since
$$
d_h(F(w), F(z)) \leq \int_{[z,w)} \frac{\lambda_F (u)}{\lambda (u)} \cdot \lambda(u)|du|, \qquad w \in \mathbb{D},
$$
 for any $\xi \in \GP (z)$, the intersection $(\mathcal K \setminus \mathcal K') \cap [z,\xi)$ can have hyperbolic length at most $2C$. 
 Consequently, for $R > 2C + 1$, the ratio
 $$
\frac{A_h (\mathcal K' \cap B_{h}(z,R))}{A_h (B_{h}(z,R))} \gtrsim e^{-2C} \cdot \delta
$$
is bounded below by a constant that only depends on $C$ and $\delta$.
To prove (i), it is enough to show that the quotient
$$
\frac{ A_h (F(\mathcal K' \cap B_{h}(z,R)))}{ A_h (B_{h} (z,R))}
$$
is bounded below by a constant independent of $z$ and $R$.

Since $F$ can only contract the hyperbolic metric on $\mathcal K'$ by a factor of at most $2$, the hyperbolic area of $F(\mathcal K'  \cap B_{h}(z,R))$
counted with multiplicity is comparable to the hyperbolic area of $B_{h}(z, R)$. Therefore, to prove (i), it suffices to show that there is a uniformly bounded number of collisions. To that end, suppose that points $u_1, u_2, \dots, u_N \in \mathcal K'$ have the same image under $F$, that is, 
$$F(u_1) = F(u_2 ) = \dots = F(u_N) = v.$$
Since  $\mathcal K$ was defined as the union of $C$-almost isometric rays, we have
$$
d_h(z, u_i) - d_h(F(z),v) \leq C, \qquad i =1, 2, \dots, N.
$$
Corollary \ref{counting-preimages} then implies that $N$ is uniformly bounded. The proof is complete.

\medskip

 (ii) $\Rightarrow$ (iii). 
As before, let $\GP (z)$ be the set of points $\xi \in \partial \mathbb{D}$ such that the geodesic ray $[z, \xi)$ is $C$-almost isometric. Below, we show that we can remove a set of points $\xi \in \partial \mathbb{D}$ of small harmonic measure seen from $z$, so that the geodesic rays $[z , \xi)$ corresponding to the remaining points will be good.
To this end, choose a sufficiently large constant $K = K(\delta) > 1$ so that
$$
\omega (F(z), \partial \mathbb{D} \setminus K  I_{F(z)}, \mathbb{D})  < \delta/2.
$$
By the L\"owner's lemma, $w (F(z), E , \mathbb{D}) = w (z, F^{-1} (E) , \mathbb{D})$, $E \subset \partial \mathbb{D}$, we have
$$\omega (z,  F^{-1}(\partial \mathbb{D} \setminus K  I_{F(z)}), \mathbb{D} ) < \delta/2.$$
Consequently, the assumption shows that $\omega (z, E(z), \mathbb{D}) \geq \delta /2$, where $E(z)$ is the set of points $\xi \in \GP (z)$ with $F(\xi) \in K  I_{F(z)}$. According to Corollary \ref{angularderivativegoodgeodesicrays}, for any $\xi \in E(z)$, the geodesic ray $[z, \xi)$ is $C_1 = C_1 (\delta , C)$-good and hence 
$$
|F'(\xi)| \leq e^{C_2}\, \frac{1-|F(z)|}{1-|z|}, 
$$
where $C_2 = C_1 + O(1)$.

\medskip

 (iii) $\Rightarrow$ (ii). This implication is obvious, since every $C$-good ray is a 
$C$-almost isometric ray. 
\end{proof}

\section{BMO estimates}

In this section, we show the equivalence of Conditions (2), (3), (4) and (5). The proofs involve a stopping time argument that we describe in Section \ref{sec:stopping-time-region}.

\subsection{A stopping time argument}
\label{sec:stopping-time-region}

Suppose $F$ is an analytic self-mapping of the unit disk.
Fix a constant $M>0$ and an arc $I \subset \partial \mathbb{D}$. We examine the dyadic decomposition of $I$.
Let $ \{ J_k \}$ be the collection of maximal dyadic arcs in $I$ for which
$$
\log \frac{1-|F(z_{J_k})|}{1-|z_{J_k}|} \ge M + \log \frac{1-|F(z_{I})|}{1-|z_{I}|}.
$$
We refer to the arcs $\{ J_k \}$ as arcs of generation 1. The stopping time region of generation 1 is defined as $\Omega_I = Q_I \setminus \bigcup Q_{J_k}$.
We record three simple observations:

\begin{itemize}
\item
By the maximality of the arcs $J_k$ and the Schwarz lemma, one has:
\begin{equation}
\label{eq:maximality-of-the-arcs}
\log \frac{1-|F(z_{J_k})|}{1-|z_{J_k}|} = M + \log \frac{1-|F(z_{I})|}{1-|z_{I}|} + O(1).
\end{equation}

\item
Corollary \ref{a-lower-bound-angular-derviative} tells that
\begin{equation}
\label{eq:lower-bound-on-good-intervals}
\log |F'(\xi)| - \log \frac{1-|F(z_{I})|}{1-|z_{I}|} \ge M - O(1),
\end{equation}
for any $\xi \in \bigcup_k J_k$.

\item 
We have
\begin{equation}
\label{estimateonregion}
\log \frac{1-|F(z)|}{1-|z|} \le M + \log \frac{1-|F(z_{I})|}{1-|z_{I}|} + O(1), \quad  z \in \Omega_I.
\end{equation}
\end{itemize}

We may repeat the construction in each arc of generation 1, that is, replace the initial arc $I$ by arcs of generation $1$ and obtain the collection of arcs of generation $2$. Continuing inductively, the stopping time region $\Omega_I^{(n)}$ of generation $n$ is defined as $Q_I \setminus \bigcup Q_{J_k^{(n)}}$ where we remove all Carleson squares associated to arcs $\{J_k^{(n)} \}$ of generation $n$.

\subsection{Equivalence of (2), (3), (4) and (5) in Theorem \ref{main-thm}}
\label{sec:equivalence-2345}

\begin{proof}[$(2) \Rightarrow (3)$]

Let $E= \{\xi \in \partial \mathbb{D}: |F'(\xi )| < \infty \}$. Condition (2) implies that 
$$
\omega (z,E,\mathbb{D} ) \geq \delta, \qquad \text{for any }z \in \mathbb{D}. 
$$
As the linear density of the set $E$ at any point of the unit circle is bounded below by a fixed multiple of $\delta$, the Lebesgue density point theorem tells us that $m(E) = 1$. 

For an arc $I \subset \partial \mathbb{D}$ in the unit circle, let $G(I)$ denote the set of points $\xi \in I$ such that the geodesic ray $[z_I , \xi)$ is $C$-good. 
In view of Corollary \ref{angularderivativegoodgeodesicrays}, we may rephrase Condition (2) as: there exist constants $C>0$ and $\delta >0 $ such that 
\begin{equation}
    \label{mesuragoodgeodesicray}
    m ( G(I) ) \geq \delta \, m(I), 
\end{equation}
for any arc $I \subset \partial \mathbb{D}$. 

We now run the stopping time argument described in the previous section with a large constant $M$ to be determined later. In view of (\ref{eq:lower-bound-on-good-intervals}), if the parameter $M$ is sufficiently large,
then by \eqref{mesuragoodgeodesicray}, the total length of the arcs of generation 1 is
$$
\sum m(J_k ) \le (1-\delta) m(I).
$$
An inductive argument shows that the total length of the arcs $\{J_k^{(n)} \}$ of generation $n$ is
$$
\sum_k m( J_k^{(n)} ) \le (1-\delta)^n m( I).
$$
By \eqref{estimateonregion}, in the complement of the arcs of generation at most $n$, we have
$$
\log |F'(\xi)| - \log  \frac{1-|F(z_{I})|}{1-|z_{I}|} \le n(M + O(1)), \quad \xi \in I \setminus \bigcup_k J_k^{(n)}, \quad n=1,2,\ldots 
$$
This shows that $\log |F'| \in L^1 (\partial \mathbb{D})$ and there exists a constant $C(M, \delta) >0$ such that 
$$
\frac{1}{m(I)} \int_I \log |F'| dm \leq \log \frac{1-|F(z_I)|}{1-|z_I|} + C(M, \delta).
$$
As we have seen in Remark 2 after Theorem \ref{main-thm} in the introduction, the lower bound in (\ref{mean}) is automatic. The same argument gives the seemingly stronger conclusion that $\log |F'| \in \BMO (\partial \mathbb{D})$, as discussed in Remark 3 after Theorem \ref{main-thm}. 
\end{proof}

\begin{proof}[$(3) \Rightarrow (4)$]
Consider the harmonic function
$$
E(z) = \int_{\partial \mathbb{D}} \log |F' (\xi)| \, \frac{1-|z|^2}{|\xi - z|^2} dm (\xi), \quad z \in \mathbb{D}.
$$
From the Schwarz lemma and the lower bound in \eqref{mean}, it follows that  $$\log |F' (z)| \leq C_1 \cdot E(z) + C_2, \quad z \in \mathbb{D},$$ for some constants $C_1, C_2 \in \mathbb{R}$. As $\log|F'(z)|$ has a harmonic majorant in the unit disk, $F'$ is in the Nevanlinna class and we can consider
its inner-outer factorization $F' = I_{F'} \cdot O_{F'}$.

A further inspection of the estimate \eqref{mean} tells us that $E$ is a harmonic function in the Bloch space and 
\begin{equation}
\label{bloch}
E(z) = \frac{1}{m(I_z)} \int_{I_z} \log |F'| dm + O(1). 
\end{equation}
See \cite[Lemmas J.1 and J.2]{garnett-marshall}. Since $\log |O_{F'} (z)| = E(z)$, $z \in \mathbb{D}$, the estimate \eqref{outer} follows from \eqref{bloch} and \eqref{mean}. 
\end{proof}

\begin{proof}[$(4) \Rightarrow (5)$]
The condition $\eqref{outer}$ may be written as
\begin{equation}
\label{ob}
  \biggr| \log |O_{F'} (z)| - \log \frac{1-|F(z)|}{1-|z|} \biggr| \leq \log C , \quad z \in \mathbb{D} .   
\end{equation}
By the Schwarz lemma, there exists a universal constant $C_1 >0$ so that
$$
\biggr| \log \frac{1-|F(z)|}{1-|z|} - \log \frac{1-|F(w)|}{1-|w|} \biggr| \leq C_1,
$$
if $d_h (z, w) \leq 1$. As a result,
$$
\bigr| \log |O_{F'} (z)| - \log |O_{F'} (w)| \bigr| \leq C_1 + 2 \log C,
$$
if $d_h (z, w) \leq 1$. From here, it is not difficult to see that $\log O_{F'}$ lies in the Bloch space. Consequently, by \cite[Lemma J.1]{garnett-marshall},
$$
\log |O_{F'} (z) | =  \frac{1}{m(I_z)} \int_{I_z} \log |F'| dm + O(1), \quad z \in \mathbb{D}, 
$$
where $O(1)$ is a quantity bounded by a constant independent of $z \in \mathbb{D}$. Combining with the estimate \eqref{ob}, we get
\begin{equation}
    \label{nova}
    \log \frac{1-|F(z)|}{1-|z|} =  \frac{1}{m(I_z)} \int_{I_z} \log |F'| dm + O(1), \qquad z \in \mathbb{D}.
\end{equation}
On the other hand, by Corollary \ref{a-lower-bound-angular-derviative}, there is a universal constant $L > 0$ so that
\begin{equation}
    \label{A}
\log \frac{1-|F(z)|}{1-|z|} \leq \log |F' (\xi)| + L, \qquad \xi \in I_z.
\end{equation}
We pick $M= M(\varepsilon)>0$ sufficiently large so that the estimates \eqref{nova} and \eqref{A} guarantee that 
\begin{equation}
    \label{nou}
    m \biggl ( \biggl \{ \xi \in I_z : \log |F' (\xi)| \ge \log \frac{1-|F(z)|}{1-|z|} +M \biggr \} \biggr ) \leq \varepsilon \, m(I_z), \quad z \in \mathbb{D}. 
\end{equation}

Fix an arc $I \subset \partial \mathbb{D}$ and run the stopping time argument  with the parameter $M_1 = M + L$ to find pairwise disjoint arcs $J_k \subset I$ with
$$
\log \frac{1-|F(z_{J_k} )|}{1-|z_{J_k}|} \geq M_1 + \log \frac{1-|F(z_I)|}{1-|z_I|}.
$$
Since
$$
\log|F'(\xi)| \geq M + \log \frac{1-|F(z_I)|}{1-|z_I|}, \qquad \xi \in \bigcup_k J_k,
$$
by Corollary \ref{a-lower-bound-angular-derviative}, the total length of the arcs $\sum m(J_k) \leq \varepsilon \, m (I)$. By the assumption \eqref{outer} and the estimate \eqref{estimateonregion}, we have
$$
|F'(z)| \, \leq \, |O_{F'} (z)| \, \leq \, C \, \frac{1 - |F(z)|}{1-|z|} \, \leq \, C_1  e^{M_1} \, \frac{1-|F(z_I)|}{1-|z_I|},
$$
for any $z \in Q_I \setminus \bigcup_k Q_{J_k}$.
\end{proof}

\begin{proof}[$(5) \Rightarrow (2)$]
Let $I \subset \partial \mathbb{D}$ be an arc. In view of the estimate \eqref{lipschitz}, the angular derivative satisfies 
$$
|F'(\xi)| \le C(\varepsilon) \cdot  \frac{1-|F(z_I)|}{1-|z_I|}, \qquad \xi \in I \setminus \bigcup J_k.
$$
Since $\sum m(J_k) < \varepsilon \, m(I)$, there exists $c_1= c_1 (\varepsilon) >0$ such that
$$
\omega \biggl (z_I , \biggl \{\xi \in \partial \mathbb{D} : |F'(\xi)| \leq C(\varepsilon) \cdot \frac{1-|F(z_I)|}{1-|z_I|} \biggr \} , \mathbb{D} \biggr ) \geq c_1. 
$$
The proof is complete.
\end{proof}

\section{Carleson condition for critical points}

In this section, we show that APHA mappings can be described using various Carleson measure conditions. The plan is to show the implications
(3) $\Rightarrow$ (6) $\Rightarrow$ (7) $\Rightarrow$ (4) of Theorem \ref{main-thm}. 

\subsection{Background on M\"obius distortion}

Recall that the M\"obius distortion
$$
\mu(z) = 1 - \frac{(1-|z|^2) |F'(z)|}{1-|F(z)|^2}
$$
of an analytic self-mapping $F$ of the unit disk measures how much $F$ deviates from an automorphism of the disk at a point $z \in \mathbb{D}$.
The following lemma follows from a normal families argument:

\begin{lemma}
\label{mobius-distortion-basics}
Let $F$ be an analytic self-mapping of the unit disk and $\mu$ be its Möbius distortion. 

{\em (i)} There exists a constant $\mu_0 >0$ such that if $F' (c)=0$, then $\mu(w) > \mu_0 $ for any $w \in B_{h}(c,1)$.

{\em (ii)} For any $R > 0$, there exists a sufficiently large constant $N = N(R)$ so that if $B_{h}(z, 1)$ contains $N$ critical points of $F$ counted with multiplicity, then
$\mu (w) > 1/2$ for any  $ w \in B_{h}(z,R)$.
\end{lemma}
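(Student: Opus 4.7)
The plan is to prove both statements by a compactness argument, relying on three features of the M\"obius distortion $\mu$: it is invariant under pre- and post-composition with automorphisms of $\mathbb{D}$, it varies continuously with $F$ in the locally uniform topology (via the Weierstrass theorem applied to $F'$), and $\mu_F \equiv 0$ if and only if $F$ is a M\"obius automorphism of $\mathbb{D}$.

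For part (i), I would argue by contradiction. Suppose there exist analytic self-maps $F_n:\mathbb{D}\to\mathbb{D}$, critical points $c_n \in \mathbb{D}$ of $F_n$, and points $w_n \in B_{h}(c_n,1)$ with $\mu_{F_n}(w_n)\to 0$. After replacing $F_n$ by $\tau_n\circ F_n\circ \sigma_n$ for suitable $\sigma_n,\tau_n\in \aut(\mathbb{D})$, I may arrange that $c_n = 0$ and $F_n(0)=0$; the invariance of $\mu$ preserves the hypothesis, and now $F_n'(0)=0$ with $w_n$ lying in the compact set $\overline{B_{h}(0,1)} \subset \mathbb{D}$. By Montel's theorem, along a subsequence $F_n$ converges locally uniformly to a self-map $F$ of $\mathbb{D}$ with $F(0)=0$, and $w_n\to w_0 \in \overline{B_{h}(0,1)}$. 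Since $F_n'\to F'$ locally uniformly, $F'(0)=0$ and $\mu_F(w_0)=0$. The equality case of the Schwarz-Pick lemma then forces $F$ to be an automorphism of $\mathbb{D}$, contradicting $F'(0)=0$.

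For part (ii), I would run essentially the same scheme. If the conclusion fails, there exist $F_n$, $z_n\in\mathbb{D}$ and $w_n\in B_{h}(z_n,R)$ with $B_{h}(z_n,1)$ containing at least $n$ critical points of $F_n$ counted with multiplicity, yet $\mu_{F_n}(w_n)\le 1/2$. Renormalizing so that $z_n=0$ and $F_n(0)=0$ and passing to a locally uniform limit $F$, the key step is to show $F\equiv 0$. Indeed, if $F$ were not identically zero, then $F(0)=0$ would force $F'\not\equiv 0$, so $F'$ would have only isolated zeros; choosing a slightly enlarged hyperbolic disk $B_{h}(0,1+\varepsilon)$ on whose boundary $F'$ has no zeros, Hurwitz's theorem would give a uniform upper bound on the number of zeros of $F_n'$ in $B_{h}(0,1+\varepsilon)$, contradicting the presence of at least $n$ zeros in $B_{h}(0,1)$. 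Hence $F\equiv 0$, so $F_n\to 0$ and $F_n'\to 0$ uniformly on the compact set $\overline{B_{h}(0,R)}$. The defining formula of $\mu$ then forces $\mu_{F_n}(w_n)\to 1$, contradicting $\mu_{F_n}(w_n)\le 1/2$.

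The main point requiring care is the extraction of convergent subsequences and the use of Hurwitz's theorem on a compact subregion of $\mathbb{D}$; once both are set up, the contradictions are immediate from the equality statement in the Schwarz-Pick lemma and the defining formula for $\mu$.
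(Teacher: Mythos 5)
Your proposal is correct and follows essentially the same route as the paper, which states only that the lemma ``follows from a normal families argument'' and leaves the details implicit; your renormalization via the M\"obius invariance of $\mu$, the extraction of a locally uniform limit, the Schwarz--Pick equality case for (i), and the Hurwitz count of critical points for (ii) supply exactly the intended argument.
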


We will also use the following lemma which characterizes inner functions of finite entropy in terms of their M\"obius distortion:

\begin{lemma}
\label{finite-entropy-characterization}
An inner function $F$ has finite entropy if and only if its Mobius distortion $\mu$ satisfies 
\begin{equation}
\label{eq:mu-L1-bound}
\int_{\mathbb{D}} \mu(z) \, \frac{dA(z)}{1-|z|} < \infty.
\end{equation}
\end{lemma}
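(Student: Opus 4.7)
The plan is to establish a Littlewood-type identity that expresses the interior integral $\int_{\mathbb{D}} \mu/(1-|z|)\, dA$ in terms of a boundary quantity, and then identify that quantity with the entropy using Dyakonov's reverse Schwarz--Pick inequality.

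Set $u(z) = -\log(1-\mu(z)) = \log\frac{1-|F(z)|^2}{(1-|z|^2)|F'(z)|} \ge 0$. Using the pointwise identities $\Delta\log(1-|f|^2) = -4|f'|^2/(1-|f|^2)^2$ for a holomorphic $f$ and $\Delta\log|F'| = 2\pi \sum m_c\, \delta_c$, one computes the distributional Laplacian
$$\Delta u = \mu(2-\mu)\lambda^2 - 2\pi \sum_{F'(c)=0} m_c\, \delta_c.$$
Applying Green's identity on $\{|z|<r\}$ and cancelling the critical-point contributions against Jensen's formula for $F'$ yields the key identity
$$\frac{1}{2\pi} \int_{|z|<r} \log(r/|z|)\, \mu(2-\mu)\lambda^2\, dA \;=\; \int_{|\xi|=r} \log\frac{1-|F(\xi)|^2}{1-r^2}\, dm(\xi) \,-\, \log(1-|F(0)|^2).$$
Since $\log(r/|z|) \asymp 1-|z|$, $\lambda^2 \asymp (1-|z|)^{-2}$, and $\mu(2-\mu) \asymp \mu$ on $[0,1]$, this identity reduces the problem to showing that $\sup_{r<1} \int_{|\xi|=r} \log[(1-|F(\xi)|^2)/(1-r^2)]\, dm$ is finite precisely when $F$ has finite entropy.

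For the forward direction, Dyakonov's inequality \eqref{eq:dyakonov} gives $(1-|F|^2)/(1-|z|^2) \le |O_{F'}|$, and the mean value property for the harmonic function $\log|O_{F'}|$ yields
$$\int_{|\xi|=r} \log\frac{1-|F(\xi)|^2}{1-r^2}\, dm \;\le\; \log|O_{F'}(0)| \;=\; \int_{\partial\mathbb{D}} \log|F'|\, dm \;<\; \infty.$$
For the converse, I would first pre-compose with an automorphism to arrange $F(0) = 0$, which preserves both $\mu$ and the finite entropy property. Schwarz--Pick gives $\log[(1-|F|^2)/(1-r^2)] \ge \log|F'|$ on each circle $|\xi|=r$, so combined with Jensen we have $\sup_r \int \log|F'(r\xi)|\, dm < \infty$. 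Julia's inequality at $F(0) = 0$ then forces $|F'(\xi)| \ge 1$ a.e.\ where the angular derivative exists, so $\log^+|F'| = \log|F'|$ a.e., and monotone convergence yields $\int \log^+|F'|\, dm < \infty$.

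The main obstacle is the forward direction, where uniform control of the boundary integral relies on Dyakonov's reverse Schwarz--Pick inequality. A self-contained argument would instead require establishing $u(r\xi) \to 0$ at a.e.\ boundary point (via the Julia--Carath\'eodory theorem, using that $|F'(\xi)| < \infty$ a.e.\ for inner functions of finite entropy) along with uniform integrability in $r$, most naturally through the subharmonicity of $u + \log|B|$ where $B$ is the Blaschke product whose zeros form the critical set of $F$---an approach that itself presupposes the Blaschke condition on those critical points and thus closes a loop that the cleaner route via Dyakonov sidesteps.
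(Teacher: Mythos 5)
Your key identity is exactly the one the paper uses: apply Green's (Riesz) representation to $v(z)=\log\frac{1-|F(z)|^2}{1-|z|^2}$, whose Laplacian is $\mu(2-\mu)\lambda^2$, to get
$$
\frac{1}{2\pi}\int_{|z|<r}\log\frac{r}{|z|}\,\mu(2-\mu)\lambda^2\,dA \;=\; \int_{\partial\mathbb{D}}\log\frac{1-|F(r\xi)|^2}{1-r^2}\,dm(\xi)\;-\;v(0).
$$
Your detour through $u=v-\log|F'|$ and the cancellation against Jensen's formula for $F'$ gives the same identity but is needlessly fragile (both sides are singular when $F'(0)=0$); the paper works with $v$ directly. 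The genuine divergence is in the next step: the paper simply cites \cite[Lemma 3.3]{inner} for the equivalence ``finite entropy $\Leftrightarrow$ bounded circle means of $v$,'' whereas you set out to prove it. Your forward direction --- Dyakonov's inequality \eqref{eq:dyakonov} together with the mean value property of the harmonic function $\log|O_{F'}|$ --- is correct and a legitimate substitute for the citation. Two small points: to normalize $F(0)=0$ you must \emph{post}-compose with an automorphism (pre-composition cannot achieve this when $F$ omits the value $0$, e.g.\ $F(z)=\exp\bigl(\tfrac{z+1}{z-1}\bigr)$; post-composition leaves $\mu$ unchanged), and since $m$ lives on the unit circle the boundary term should be written as an integral of $\xi\mapsto F(r\xi)$ over $\partial\mathbb{D}$.

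The converse direction as written has a real gap. Bounding $\sup_{r<1}\int\log|F'(r\xi)|\,dm$ does not place $F'$ in $\mathcal N$: the Nevanlinna condition concerns $\log^+|F'(r\xi)|$ on interior circles, and $\int\log|g(r\xi)|\,dm$ can remain bounded while $\int\log^+|g(r\xi)|\,dm\to\infty$ --- take $g=\exp\bigl((\tfrac{1+z}{1-z})^2\bigr)$, whose circle means of $\log|g|$ are identically $1$ but whose negative (hence also positive) parts blow up. Your appeal to Julia's inequality only controls the \emph{boundary} function $|F'(\xi)|$, and integrability of $\log^+$ of the boundary values is not equivalent to membership in $\mathcal N$ (the same example has $|g|\le 1$ on $\partial\mathbb{D}$). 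The repair stays entirely within your toolkit: with $F(0)=0$, Schwarz--Pick gives the pointwise bound $\log^+|F'(z)|\le\log\frac{1-|F(z)|^2}{1-|z|^2}$ for all $z\in\mathbb{D}$ (the right-hand side is nonnegative by the Schwarz lemma), and the circle means of the right-hand side are uniformly bounded by your key identity once \eqref{eq:mu-L1-bound} holds. This gives $\sup_{r<1}\int\log^+|F'(r\xi)|\,dm<\infty$ directly, and the Julia step can be discarded.
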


\begin{proof} Without loss of generality, we can assume that $F(0)=0$. By \cite[Lemma 3.3]{inner}, an inner function has finite entropy if and only if
\begin{equation*}
\lim_{r \to 1} \int_{|z|=r} \log \frac{1-|F(z)|^2}{1-|z|^2} \, |dz| < \infty,
\end{equation*}
in which case,
\begin{equation*}
\int_{\partial \mathbb{D}} \log |F'(z)| dm \, = \, \lim_{r \to 1} \, \frac{1}{2\pi} \int_{|z|=r} \log \frac{1-|F(z)|^2}{1-|z|^2} \, 
|dz|.
\end{equation*}
Applying Green's formula on the disk $\{z : |z| \leq r \}$ and taking $r \to 1$ shows that the above expression is equal to
\begin{equation*}
\frac{1}{2\pi} \int_{\mathbb{D}} \Delta \biggr( \log  \frac{1-|F(z)|^2}{1-|z|^2} \biggr)  \log \frac{1}{|z|} \, dA(z).
\end{equation*}
Since
$$
 \Delta \biggr( \log  \frac{1-|F(z)|^2}{1-|z|^2} \biggr) = \biggl ( \frac{2}{1-|z|^2} \biggr )^2 - \biggl ( \frac{2 |F'(z)|}{1-|F(z)|^2} \biggr )^2, \quad z \in \mathbb{D},
 $$
the above integral is comparable to
\begin{equation*}
  \int_{\mathbb{D}} \mu(z) \, \frac{dA(z)}{1-|z|},
 \end{equation*}
which proves the lemma.
\end{proof}

\subsection{Carleson-Newman Blaschke products}
A Blaschke product is called an {\em interpolating Blaschke product} if its zeros form an interpolating sequence for bounded analytic functions in $\mathbb{D}$. According to a celebrated result of L.~Carleson, a Blaschke product $B$ is interpolating if and only if its zeros are separated in the hyperbolic metric and there exists a constant $C>0$ such that
      \begin{equation}
          \label{CNBP}
          \sum_{z \in Q_I: \, B(z) = 0} (1-|z|) \leq C m(I),
      \end{equation}
      for any Carleson square $Q_I \subset \mathbb{D}$. A proof can be found in \cite[Chapter VII]{garnett}.

      Recall from the introduction that a Blaschke product $B$ is  a {\em Carleson-Newman Blaschke product} if (\ref{CNBP}) holds, without requiring separation.
 It is well-known that Carleson-Newman Blaschke products are precisely those Blaschke products which can be factored into a finite product  of interpolating Blaschke products.
 
      In the following lemma, we gather several elementary properties of Carleson-Newman Blaschke products that will be used below:
      
     \begin{lemma}
\label{carleson-newman}
Suppose $B$ is a Carleson-Newman Blaschke product with zeros $\{z_n \}$. 

{\em (a)} For any $\varepsilon > 0$, there exists a $\delta > 0$ such that 
\begin{equation}
\label{CN}
\{ z \in \mathbb{D}: |B(z)| < \delta \} \subset \bigcup B_{h} (z_n , \varepsilon).     
\end{equation}

{\em (b)} For any $\rho >0$, there exists an $N = N(B, \rho)$ such that any disk of hyperbolic radius $\rho$ contains at most $N$ zeros of $B$, counted with multiplicity. 

{\em (c)} There exist constants $\delta, \varepsilon >0$ such that for any $z \in \mathbb{D}$ there exists $w \in \mathbb{D}$ with $d_h (w,z) \leq \delta$ such that $|B(w)| \geq \varepsilon$. 
Conversely, the existence of such constants implies that $B$ is a Carleson-Newman Blaschke product.
\end{lemma}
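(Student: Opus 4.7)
The plan is to establish the four assertions in order: (b), then (a), then (c), and finally the converse. For (b), I would use the Carleson-Newman condition directly. A hyperbolic disk $B_{h}(z,\rho)$ sits inside a Carleson square $Q_I$ with $m(I) \asymp (1-|z|) e^{2\rho}$, and each zero $z_n \in B_{h}(z,\rho)$ satisfies $1-|z_n| \gtrsim (1-|z|) e^{-2\rho}$. The assumption $\sum_{z_n \in Q_I}(1-|z_n|) \le C\,m(I)$ then bounds the number of zeros in $B_{h}(z,\rho)$ by a constant depending only on $B$ and $\rho$.

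For (a), I would invoke the classical fact that every Carleson-Newman Blaschke product factors as $B = B_1 \cdots B_k$ into finitely many interpolating Blaschke products. For each $B_i$, whose zeros $Z_{B_i}$ are hyperbolically separated, a standard estimate provides a constant $c_i>0$ such that $|B_i(z)|$ is at least $c_i$ times the pseudohyperbolic distance from $z$ to $Z_{B_i}$. If $|B(z)|<\delta$, then $|B_i(z)|<\delta^{1/k}$ for some $i$, placing $z$ within pseudohyperbolic distance $\delta^{1/k}/c_i$ of a zero of $B_i$; this zero belongs to $\{z_n\}$, and translating the pseudohyperbolic estimate to hyperbolic distance yields (a).

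Part (c) follows from (a) and (b) by a volume argument. If (c) failed, for every pair of constants one could find $z \in \mathbb{D}$ with $|B(w)|<\varepsilon$ throughout $B_{h}(z,\delta)$. Part (a) would then force $B_{h}(z,\delta) \subseteq \bigcup_n B_{h}(z_n, \eta)$ with $\eta = \eta(\varepsilon)\to 0$, while (b) bounds the number of zeros in $B_{h}(z,\delta+1)$ by some $N=N(B)$ independent of $z$. Comparing $A_{h}(B_{h}(z,\delta)) \asymp e^{\delta}$ with $N \cdot A_{h}(B_{h}(z_n, \eta)) \asymp N\eta^2$ produces a contradiction once $\delta$ is large and $\varepsilon$ is small enough.

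The main obstacle is the converse. Suppose (c) holds with constants $\delta, \varepsilon$, and fix an arc $I \subset \partial \mathbb{D}$; by (c), pick $w$ with $d_{h}(w, z_I) \le \delta$ and $|B(w)| \ge \varepsilon$. Writing $b_{z_n}(w) = (w-z_n)/(1-\bar z_n w)$ and applying $1-x^2 \le 2\log(1/x)$ term by term in $\log(1/|B(w)|) = \sum_n \log(1/|b_{z_n}(w)|)$ gives
$$
\sum_n \bigl(1 - |b_{z_n}(w)|^2\bigr) \,\le\, 2\log\frac{1}{|B(w)|} \,\le\, 2\log\frac{1}{\varepsilon}.
$$
A geometric check shows that for $z_n \in Q_I$ and $w$ within hyperbolic distance $\delta$ of $z_I$, one has $|1-\bar z_n w| \lesssim m(I)$ and $1-|w|^2 \asymp m(I)$, whence
$$
1-|b_{z_n}(w)|^2 \,=\, \frac{(1-|w|^2)(1-|z_n|^2)}{|1-\bar z_n w|^2} \,\gtrsim\, \frac{1-|z_n|}{m(I)}.
$$
Restricting the sum to $z_n \in Q_I$ yields $\sum_{z_n \in Q_I}(1-|z_n|) \lesssim m(I)\log(1/\varepsilon)$, which is the Carleson-Newman condition. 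The delicate point is the uniform geometric estimate $|1-\bar z_n w| \lesssim m(I)$ for $z_n \in Q_I$ and $w$ near $z_I$; everything else is calculus and the Blaschke-product identity.
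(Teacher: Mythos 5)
Your proof is correct, and for parts (a) and (b) it follows essentially the same route as the paper: (a) reduces to Kerr--Lawson's lemma for the interpolating factors of $B$ (the paper cites that lemma in its qualitative form and multiplies the resulting constants $\delta_1\cdots\delta_k$, whereas you use the quantitative form $|B_i(z)|\gtrsim \mathrm{dist}_{\mathrm{pseudohyp}}(z,Z_{B_i})$ -- the same mechanism), and (b) is the direct counting argument from the Carleson condition. Where you genuinely diverge is in (c) and the converse: the paper disposes of both by citing a theorem of Mortini and Nicolau, while you derive (c) from (a) and (b) by an area comparison (fix the radius, use (b) to bound the number of zeros nearby, shrink $\eta$ so that $N\eta^2$ is smaller than the area of the fixed ball, then invoke (a)), and you prove the converse directly from the elementary inequality $1-x^2 \le 2\log(1/x)$ applied termwise to $\log(1/|B(w)|) = \sum_n \log(1/|b_{z_n}(w)|)$ together with the identity $1-|b_{z_n}(w)|^2 = (1-|w|^2)(1-|z_n|^2)/|1-\bar z_n w|^2$. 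Both of your arguments are sound; the geometric estimate you single out as delicate follows from $|1-\bar z_n w| \le (1-|z_n|^2)/|z_n| + |z_n - \xi| + |\xi - w|$ with $\xi$ the centre of $I$, each term being $O_\delta(m(I))$ for $z_n \in Q_I$ and $d_h(w,z_I)\le\delta$. The payoff of your route is a self-contained proof in place of an external citation, at the cost of a page of computation the paper avoids.
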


\begin{proof}[Proof]
For interpolating sequences, (a) is just \cite[Lemma 1]{KL}. If $B = B_1 B_2 \cdots B_k$ is a finite product of interpolating Blaschke products, with constants
$\delta_1, \delta_2, \dots, \delta_k$ corresponding to $\varepsilon$ for which the inclusion \eqref{CN} holds, then \eqref{CN} also holds for $B$ with
$\delta = \delta_1 \delta_2 \dots \delta_k$.

From \eqref{CNBP}, it is clear that any disk contained in the top part of a Carleson square may contain a uniformly bounded number of zeros of $B$. This proves (b) for disks of sufficiently small hyperbolic radius. To see the general case, it is enough to notice that any disk of a fixed hyperbolic radius can be covered by a bounded number of disks of small hyperbolic radius.
Finally, (c) can be found in \cite[Theorem 2.2]{MN}.
\end{proof}

For future reference, we record the following corollary:

\begin{corollary}
\label{good-annuli}
Suppose $B$ is a Carleson-Newman Blaschke product with zeros $\{z_n \}$. For any $\rho > 0$, there exist constants $\delta, m > 0$, an integer $N \in \mathbb{N}$ and a collection of round annuli
$$A_n = \{ z \in \mathbb{D}: r_n < d_h(z_n, z) < R_n \}$$
satisfying the following conditions:

\begin{enumerate}
\item[$(1)$] $A_n \subset  \{ z \in \mathbb{D}: \rho < d_h(z_n, z) < 2 \rho \}$.
\item[$(2)$] $R_n/r_n \ge 1 + m$.
\item[$(3)$] $|B(z)| > \delta$ for $z \in A_n$.
\item[$(4)$] Any point in the unit disk is contained in at most $N$ of the annuli $A_n$.
\end{enumerate}
\end{corollary}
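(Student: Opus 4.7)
\medskip

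\noindent\textbf{Proof proposal.}
The plan is to construct each $A_n$ by a pigeonhole argument in the radial direction from $z_n$: since $B$ has boundedly many zeros in any fixed hyperbolic ball and the set $\{|B|<\delta\}$ is contained in small neighbourhoods of the zeros, we can find a thin round sub-annulus of $\{\rho<d_h(z_n,\cdot)<2\rho\}$ that avoids those neighbourhoods.

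More concretely, first I fix auxiliary constants. Let $N_0=N(B,3\rho)$ be the uniform bound on the number of zeros of $B$ in any hyperbolic disk of radius $3\rho$, as provided by Lemma \ref{carleson-newman}(b). Choose an integer $K$ with $K>2N_0$, set $\varepsilon=\rho/(3K)$ so that $2\varepsilon<\rho/K$, and let $\delta=\delta(\varepsilon)>0$ be the constant from Lemma \ref{carleson-newman}(a), so that $|B(z)|\geq\delta$ whenever $z$ lies at hyperbolic distance at least $\varepsilon$ from every zero of $B$.

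Now fix a zero $z_n$. The disk $B_h(z_n,2\rho+\varepsilon)\subset B_h(z_n,3\rho)$ contains at most $N_0$ zeros $\{z_k\}_{k\in\mathcal{I}_n}$, each of which contributes a ``forbidden'' interval $J_k=\{t\in[\rho,2\rho]:|t-d_h(z_n,z_k)|<\varepsilon\}$ of length at most $2\varepsilon$. Partition $[\rho,2\rho]$ into $K$ consecutive subintervals of length $\rho/K$. Since each $J_k$ has length smaller than $\rho/K$, it meets at most two of these subintervals, so the $|\mathcal{I}_n|\leq N_0$ forbidden intervals meet at most $2N_0<K$ of them. Choose a subinterval $[r_n,R_n]\subset[\rho,2\rho]$ that is disjoint from all $J_k$ and define
\[
A_n=\{z\in\mathbb{D}:r_n<d_h(z_n,z)<R_n\}.
\]
Property (1) is immediate from $[r_n,R_n]\subset[\rho,2\rho]$. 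For (2), $R_n-r_n=\rho/K$ while $r_n\leq 2\rho$, so $R_n/r_n\geq 1+1/(2K)$ and we may take $m=1/(2K)$. For (3), if $z\in A_n$ then $d_h(z_n,z)\in[r_n,R_n]$ avoids every $J_k$, hence $d_h(z,z_k)\geq\varepsilon$ for every $z_k\in\mathcal{I}_n$; for any zero $z_k\notin B_h(z_n,2\rho+\varepsilon)$ the triangle inequality gives $d_h(z,z_k)\geq d_h(z_n,z_k)-d_h(z_n,z)>\varepsilon$ directly. Thus $z$ is at hyperbolic distance $\geq\varepsilon$ from every zero of $B$, so $|B(z)|\geq\delta$ by Lemma \ref{carleson-newman}(a).

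Finally, property (4) is the quickest: if $w\in A_n$ then $d_h(w,z_n)<2\rho$, so $z_n\in B_h(w,2\rho)$, and Lemma \ref{carleson-newman}(b) bounds the number of such $z_n$ by $N=N(B,2\rho)$. There are no real obstacles here; the only subtlety is choosing $\varepsilon$ small compared to $\rho/K$ so that the $N_0$ forbidden intervals cannot block every subinterval, and making sure that zeros lying slightly outside $B_h(z_n,2\rho)$ are still automatically at distance $>\varepsilon$ from $A_n$, which is why I enlarge the counting disk to radius $2\rho+\varepsilon$.
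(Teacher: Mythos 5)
Your proposal is correct and follows essentially the same route as the paper: both arguments use Lemma \ref{carleson-newman}(b) to bound the number of zeros in a fixed hyperbolic disk, pigeonhole a thin radial sub-annulus of $\{\rho < d_h(z_n,\cdot) < 2\rho\}$ that stays a definite hyperbolic distance from the zero set, and then invoke Lemma \ref{carleson-newman}(a) for the lower bound on $|B|$ and part (b) again for the bounded overlap. Your write-up simply makes explicit the constants and the ``enlarged counting disk'' detail that the paper's terser proof leaves implicit.
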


\begin{proof}
By Lemma \ref{carleson-newman}(b), there exists an integer $N = N(B)$ so that any disk of hyperbolic radius $2\rho$ contains at most $N$ zeros of $B$, which a fortiori implies (4).
This allows us to choose the annuli $A_n$ which satisfy the conditions (1), (2) with $m \asymp 1/N$ and contain no zeros of $B$. By replacing each annulus with the middle third sub-annulus, we can guarantee that
\begin{enumerate}
\item[$(3')$] $d_h \bigl (A_n, \{ z: B(z) = 0 \} \bigr ) \gtrsim 1/N$.
\end{enumerate}
An application of Lemma \ref{carleson-newman}(a) provides a $\delta$ so that (3) holds.
\end{proof}

\subsection{Equivalence of (3), (4), (6) and (7) in Theorem \ref{main-thm}}

\begin{proof}[$(3) \Rightarrow (6)$]
Fix a point $z \in \mathbb{D}$ and let $[z,z_1]$ denote the geodesic arc joining $z, z_1 \in \mathbb{D}$. From
$$
d_h (F(z_1), F(z)) \leq \int_{[z, z_1]} \frac{2|F'(w)|}{1-|F(w)|^2} \, |dw|,
$$
it is clear that 
$$
\int_{[z, z_1]} \mu (w) \, \frac{2|dw|}{1-|w|^2}  \leq d_h (z_1, z) - d_h (F(z_1), F(z)).
$$
By the triangle inequality, we have
$$
d_h (F(z_1), F(z)) \, \ge \, d_h (|F(z_1)|, |F(z)|) \, \ge \, d_h (0, |F(z_1)|) - d_h (0, |F(z)|).
$$
In particular, if $z_1$ belongs to a geodesic ray which connects $z$ to a point $\xi \in I_z$, then
\begin{align*}
\int_{[z, z_1]} \mu (w) \, \frac{2|dw|}{1-|w|^2} & \leq \biggl \{ \log \frac{1-|z|}{1-|z_1|} + O(1) \biggr \} - \biggl \{ \log \frac{1-|F(z)|}{1-|F(z_1)|} + O(1) \biggr \} \\
& \leq \log \frac{1-|F(z_1)|}{1-|z_1|} - \log \frac{1-|F(z)|}{1-|z|} + O(1).
\end{align*}
Taking $z_1 \to \xi$ along $[z, \xi)$, we obtain
$$
\int_{[z, \xi)} \mu (w)\, \frac{2|dw|}{1-|w|^2} \leq \log |F'(\xi)| - \log \frac{1-|F(z)|}{1- |z|} + O(1).
$$
It remains to integrate over $\xi \in I$ and invoke the estimate \eqref{mean}.
\end{proof}

\begin{proof}[$(6) \Rightarrow (7)$]
By Lemma \ref{mobius-distortion-basics}(ii), the Carleson condition for the M\"obius distortion given in the estimate \eqref{estimatedistortion} implies that the number of critical points of $F$ (counted with multiplicity) in any disk of hyperbolic radius 1 is uniformly bounded above. In other words, any point in the unit disk is contained in a uniformly bounded number of disks $B_h (c, 1)$ with $c \in \crit F$.
The Carleson estimate for critical points
$$
\sum_{c \in  Q_I:\, F'(c)=0} (1 - |c|) \lesssim m(I),
$$
now follows from Lemma \ref{mobius-distortion-basics}(i) and \eqref{estimatedistortion}. 

To see that $F$ satisfies Condition (7), it remains to show that $F$ is a maximal Blaschke product. From Lemma \ref{finite-entropy-characterization}, we know that $F$ has finite entropy. According to \cite[Corollary 2.2]{inner}, $F$ is a maximal Blaschke product if and only if the inner factor of $F'$ is a Blaschke product. 

We proceed by contradiction: if $F$ is not a maximal Blaschke product, then $F'$ has a nontrivial singular inner factor. Let $\sigma = \sigma(F')$ be the corresponding singular measure. 
As explained in \cite[Theorem 1.2]{IK22},
for $\sigma$ a.e.~$\xi \in \partial \mathbb{D}$, the non-tangential limit $\lim_{r \to 1 } F(r \xi)$ exists and lies in the open unit disk $\mathbb{D}$. In particular, by Lemma \ref{lambda-lemma1}, $\mu (z) \to 1$
as $z \to \xi$ non-tangentially. As a result of this and Lemma \ref{lambda-lemma2}, the measure $\mu(z) (1-|z|)^{-1} dA(z)$ violates the Carleson condition in the Carleson squares $Q_I$ associated to small arcs $I$ centered at $\xi$ of lengths tending to $0$. 
\end{proof}

\begin{proof}[$(7) \Rightarrow (4)$]
Since $C$ is a Blaschke sequence, the derivative of the maximal Blaschke product $F$ with critical set $C$ is in the Nevanlinna class by
\cite[Theorem 4.4]{kraus}. Let $F' = B O$ be the inner-outer factorization of $F'$.
By Lemma \ref{carleson-newman}(c), there exist constants $\varepsilon >0$ and $\delta >0$ such that for any $z \in \mathbb{D}$, there exists a $w \in \mathbb{D}$ with $d_h (z,w) \leq \delta$  such that $|F'(w)| \geq \varepsilon |O (w)|$. As $|O(z)| \asymp |O(w)|$,
$$
 \frac{1-|F(z)|}{1-|z|} \, \asymp \,  \frac{1-|F(w)|^2}{1-|w|^2} \, \ge \, |F'(w)| \, \ge \, \varepsilon  |O (w)| \, \asymp \, |O (z)|.
$$
The estimate in the other direction always holds in view of Dyakonov's inequality \eqref{eq:dyakonov}.
\end{proof}

\section{Aleksandrov-Clark measures}

In this section, we show Theorem \ref{ACofAPHAM}.
Given a positive Borel measure $\sigma$ on the unit circle, recall that $H[\sigma]$ is defined as 
$$
H[\sigma](z) = \int_{\partial \mathbb{D}} \frac{d\sigma(\xi)}{|\xi - z|^2}, \qquad z \in \overline{ \mathbb{D}}.
$$
Let $u$ be the Poisson extension of $\sigma$ to $\mathbb{D}$. Note that $H[\sigma] (z) = u(z)/ (1-|z|^2)$, $z \in \mathbb{D}$. For a Carleson square $Q=Q_I$, we denote its center by $z_Q = z_I$. By Harnack's inequality, for any Carleson square $Q$, we have
$$
\frac{1-|z|^2}{1-|z_Q|^2} \, \lesssim \, \frac{u(z)}{u(z_Q)} \, \lesssim \, \frac{1-|z_Q|^2}{1-|z|^2}, \qquad z \in Q.
$$
In particular, $H[\sigma]$ is bounded below in $\mathbb{D}$ by a positive constant and for any Carleson square $Q$, we have
\begin{equation}
\label{eq:u-monotonicity}
H[\sigma] (z_Q) \lesssim H[\sigma] (z), \qquad z \in \overline{ Q} .  
\end{equation}

Theorem \ref{ACofAPHAM} follows from the following more general result:

\begin{theorem}
\label{Htheorem}
Let $\sigma$ be a Borel measure on $\partial \mathbb{D}$ and let $H[\sigma]$ be defined by \eqref{Hdefinition}. The following statements are equivalent:

{\em (a)} The measure $\sigma$ is the Aleksandrov-Clark measure of some APHA mapping $F$ and some $\alpha \in \partial \mathbb{D}$.

{\em (b)} $\Delta \bigl ( \log H[\sigma] (z) \bigr ) (1-|z|^2) dA(z)$ is a Carleson measure.

{\em (c)} There exists a constant $C > 0$ such that for any Carleson square $Q \subset \mathbb{D}$ and any $M>1$, we have
$$
\sum_{Q_j \in \mathcal A(Q, M)} \ell (Q_j) \lesssim M^{-C}  \ell(Q),
$$
where $\mathcal A(Q, M)$ is the collection of maximal dyadic subsquares $\{Q_j\}$ of $Q$ such that $H[\sigma](z_{Q_j}) \ge M H[\sigma](z_Q)$.

{\em (d)} The function $\log H[\sigma] \in \BMO(\partial \mathbb{D})$ and
\begin{equation}
    \label{bmoac}
    \frac{1}{|I|} \int_I \log H[\sigma](\xi) dm (\xi) = \log H[\sigma](z_I) + O(1),
\end{equation}
uniformly over all arcs $I \subset \partial \mathbb{D}$.
\end{theorem}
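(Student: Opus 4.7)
The plan is to establish the equivalences by showing (a) $\Leftrightarrow$ (d) and (b) $\Leftrightarrow$ (c) $\Leftrightarrow$ (d). The link between (a) and the analytic conditions is the identity obtained by taking real parts in (\ref{eq:ACMeasureHerglotz}):
$$
(1-|z|^2) H[\sigma](z) \, = \, \frac{1-|F(z)|^2}{|\alpha - F(z)|^2}, \qquad z \in \mathbb{D},
$$
which on the boundary gives $H[\sigma](\xi) = |F'(\xi)|/|\alpha - F(\xi)|^2$ at $m$-a.e.~$\xi \in \partial \mathbb{D}$. Two general facts I would use throughout are: the lower bound in (\ref{bmoac}) holds for any positive Borel $\sigma$ (the pointwise estimate $1/|\xi - \eta|^2 \gtrsim 1/|z_I - \eta|^2$ for $\xi \in I$ yields $H[\sigma](\xi) \gtrsim H[\sigma](z_I)$); and the Schwarz-Pick inequality gives $(1-|z|^2)|F'(z)|/|\alpha - F(z)| \le 2$, so $\log|\alpha - F|$ lies in the Bloch space, furnishing the mean value identity $\frac{1}{m(I)} \int_I \log|\alpha - F| dm = \log|\alpha - F(z_I)| + O(1)$ via \cite[Lemma J.1]{garnett-marshall}.

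The equivalences (b) $\Leftrightarrow$ (c) $\Leftrightarrow$ (d) are standard translations between the Carleson measure, dyadic level-set, and BMO-with-mean-value formulations. For (d) $\Rightarrow$ (c) I would apply John-Nirenberg: a maximal dyadic subsquare $Q_j \subset Q$ with $H[\sigma](z_{Q_j}) \ge M H[\sigma](z_Q)$ corresponds, via (\ref{bmoac}), to an arc on which the mean of $\log H[\sigma]$ exceeds its mean on $I_Q$ by $\log M - O(1)$, yielding $\sum \ell(Q_j) \lesssim M^{-C} \ell(Q)$. For (c) $\Rightarrow$ (b) I would decompose each Carleson square into successive stopping time generations as in Section \ref{sec:stopping-time-region} (with $\log H[\sigma]$ playing the role of $\log \frac{1-|F|}{1-|z|}$), control $|\Delta \log H[\sigma]|(1-|z|^2)\, dA$ on each generation by the oscillation of $\log H[\sigma]$, and sum geometrically using (c). The step (b) $\Rightarrow$ (d) is the classical fact that a Carleson measure condition on the Laplacian provides BMO and mean-value estimates on the boundary.

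For (a) $\Rightarrow$ (d) I would combine the identity $\log H[\sigma] = \log|F'| - 2\log|\alpha - F|$ on $\partial \mathbb{D}$ with Theorem \ref{main-thm}(3$'$) for $\log|F'|$, the Bloch mean-value identity above, and the fact that $\log|\alpha - F| \in \BMO(\partial \mathbb{D})$ for APHA $F$ (which can be extracted from the discrete/sparse atomic structure of $\sigma_\alpha$ guaranteed by Lemma \ref{nevanlinna-ac} together with the Carleson condition for critical points in Theorem \ref{main-thm}(7)). The main obstacle is the converse (d) $\Rightarrow$ (a). First, (d) implies $\log H[\sigma] \in L^1_{\loc}$, which forces $\sigma$ to be singular with respect to $m$: an absolutely continuous part would make $H[\sigma] = +\infty$ on a set of positive $m$-measure. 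With $\sigma$ singular, set
$$
G(z) = \int_{\partial \mathbb{D}} \frac{\xi + z}{\xi - z} d\sigma(\xi), \qquad F(z) = \frac{G(z) - 1}{G(z) + 1};
$$
then $F: \mathbb{D} \to \mathbb{D}$ is an inner function whose Aleksandrov-Clark measure at $\alpha = 1$ is $\sigma$. Using $\log H[\sigma] = \log|F'| - 2\log|1 - F|$ and the Bloch mean-value identity applied to $\log|1-F|$, condition (d) translates into BMO membership of $\log|F'|$ together with the mean value estimate (\ref{mean}) of Theorem \ref{main-thm}(3), verifying that $F$ is APHA.
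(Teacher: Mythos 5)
Your route from (a) to the boundary condition (d) is genuinely different from the paper's and is essentially sound: the paper never uses the boundary factorization $\log H[\sigma] = \log|F'| - 2\log|\alpha - F|$, but instead links (a) to (b) through the interior identity $\Delta\bigl(\log H[\sigma]\bigr)(1-|z|^2) = 4\bigl(1-(\lambda_F/\lambda)^2\bigr)/(1-|z|^2)$ and Condition (6) of Theorem \ref{main-thm}, reaching (d) only via (c) and Stromberg's theorem; you go through Condition (3)/(3$'$) instead. One ingredient you must add: to apply \cite[Lemma J.1]{garnett-marshall} to $\log|\alpha - F|$ you need this function to be the Poisson integral of its boundary values, i.e.\ you need $\alpha - F$ to be \emph{outer}. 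This is true --- by \eqref{eq:ACMeasureHerglotz}, $2\alpha/(\alpha - F) = 1 + \int_{\partial\mathbb{D}}\frac{\xi+z}{\xi-z}\,d\sigma_\alpha(\xi) + iC_\alpha$ has positive real part, hence is outer and zero-free --- but without it the identity $\frac{1}{m(I)}\int_I \log|\alpha - F|\,dm = \log|\alpha - F(z_I)| + O(1)$ could fail by a singular inner contribution. (Your parenthetical appeal to Lemma \ref{nevanlinna-ac} and Condition (7) is not the right mechanism and can be dropped.) With that fixed, both directions of (a) $\Leftrightarrow$ (d) go through, the converse after checking, as you do, that $\sigma$ is singular and that $H[\sigma](\xi)<\infty$ forces $|F'(\xi)|<\infty$ via Julia--Carath\'eodory.

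The genuine gap is (b) $\Rightarrow$ (d), which you dismiss as ``the classical fact that a Carleson measure condition on the Laplacian provides BMO and mean-value estimates on the boundary.'' No such off-the-shelf fact applies here: $\log H[\sigma] = -\log(1-|z|^2) + \log u$ is not harmonic, and neither summand of its Laplacian, $4/(1-|z|^2)^2$ nor $-|\nabla u|^2/u^2$, individually yields a Carleson measure after multiplication by $(1-|z|^2)$; condition (b) is a statement about their near-cancellation. Extracting from this the boundary identity \eqref{bmoac} --- in particular that the average of $\log H[\sigma]$ over $I$ matches the \emph{interior} value at $z_I$ --- is precisely what the paper's Lemma \ref{Hlemma} (a Green's identity computation over stopping-time regions) and the Stromberg argument in (c) $\Rightarrow$ (d) are designed to do; it is the technical heart of the theorem and cannot be waved through. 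Since your logical scheme is (a) $\Leftrightarrow$ (d) together with (d) $\Rightarrow$ (c) $\Rightarrow$ (b) $\Rightarrow$ (d), this missing link leaves (b) unconnected from the other three conditions. The cheapest repair, with the tools you already have on the table, is to replace (b) $\Rightarrow$ (d) by (b) $\Rightarrow$ (a): taking the Laplacian of the real-part identity you wrote down gives $\Delta\bigl(\log H[\sigma]\bigr)(1-|z|^2) = 4\mu(z)(2-\mu(z))/(1-|z|^2) \asymp \mu(z)/(1-|z|)$, so (b) is exactly Condition (6) of Theorem \ref{main-thm} for the associated self-map $F$, which is therefore an APHA mapping with $\sigma_\alpha(F)=\sigma$.
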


In the proof of Theorem \ref{Htheorem}, we use the following technical lemma, whose proof will be given at the end of the section:

\begin{lemma}
\label{Hlemma}
Let $Q \subset \mathbb{D}$ be a Carleson square and  $\mathcal A(Q, M)$ be the collection of maximal dyadic subsquares $\{Q_j\}$ of $Q$ such that $H[\sigma](z_{Q_j}) \ge M H[\sigma](z_Q)$. 
Form the stopping time region $\Omega = Q \setminus \bigcup_{Q_j \in \mathcal A(Q, M)} Q_j$. Then, 
$$
\log M \cdot \sum_{Q_j \in \mathcal A(Q, M)} \ell(Q_j) - O(\ell(Q)) \le \int_{\Omega} \Delta \bigl ( \log  H[\sigma] (z) \bigr) \log \frac{1}{|z|} d A(z)
$$
$$\le \log M \cdot \ell(Q) + O(\ell(Q)).$$ 
\end{lemma}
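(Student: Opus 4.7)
The plan is to apply Green's second identity to $v(z) := \log H[\sigma](z) - \log H[\sigma](z_Q)$ paired with the weight $\phi(z) := \log(1/|z|)$, which is harmonic on $\Omega$ because $0 \notin Q$. After truncating at $\{|z| < 1-\delta\}$ and taking $\delta \to 0$, this gives
\begin{equation*}
\int_{\Omega} \phi \cdot \Delta v \, dA = \int_{\partial \Omega}\bigl(\phi \, \partial_n v - v \, \partial_n \phi\bigr) ds,
\end{equation*}
where $\partial_n$ is the outward normal from $\Omega$ and $\Delta v = \Delta \log H[\sigma]$. The two-sided estimate of the lemma will follow from a careful decomposition of $\partial \Omega$ together with pointwise bounds on $v$ and $\nabla v$.

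First I would collect the pointwise facts needed later. From \eqref{eq:u-monotonicity} combined with Harnack's inequality, $v(z) \ge -C_0$ on $\overline{Q}$. The maximality of the stopping family $\mathcal{A}(Q,M)$ and Harnack yield $v(z) \le \log M + C_0$ on $\overline{\Omega}$, and $v = \log M + O(1)$ on each $\partial Q_j$. The standard Harnack gradient estimate for the positive harmonic Poisson extension $u$, together with $|\nabla \log(1-|z|^2)| \le 2/(1-|z|^2)$, gives $|\nabla v(z)| \lesssim 1/(1-|z|)$ throughout $\overline{\Omega}$.

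Next, decompose $\partial \Omega$ into: (a) the arc $I \setminus \bigcup I_j \subset \partial \mathbb{D}$; (b) the bottom arc and two radial sides of $Q$; (c) the bottom arc of each stopping square $Q_j$; (d) the two radial sides of each $Q_j$. On (a), $\phi = 0$ and $\partial_n \phi = -1$, so the contribution is $\int_{I \setminus \bigcup I_j} v \, ds$, which by the pointwise bounds lies in $[-O(\ell(Q)),\, \log M \cdot \ell(Q) + O(\ell(Q))]$. On (c), the bottom of each $Q_j$ has outward normal $\hat r$ (pointing into $Q_j$), so $\partial_n \phi = -1/|z| \asymp -1$, $\phi \asymp \ell(Q_j)$, and $v = \log M + O(1)$; combining these with the gradient bound $|\partial_n v| \lesssim 1/\ell(Q_j)$ for the $\phi\,\partial_n v$ term shows each bottom contributes $\log M \cdot \ell(Q_j) + O(\ell(Q_j))$, so the total from (c) is $\log M \cdot \sum \ell(Q_j) + O(\ell(Q))$. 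The contributions from (b) and (d) are $O(\ell(Q))$; the key sub-estimate is that on any radial side of length $\ell$ one has $\int \phi \cdot |\nabla v|\, ds \lesssim \int_{1-\ell}^1 \frac{1-|z|}{1-|z|}\, d|z| = \ell$, the weight $\phi \asymp 1-|z|$ exactly canceling the singularity of $\nabla v$.

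Assembling everything, $\int_\Omega \phi \cdot \Delta v \, dA = \int_{I \setminus \bigcup I_j} v \, ds + \log M \cdot \sum \ell(Q_j) + O(\ell(Q))$. The upper bound then follows from $\int_{I \setminus \bigcup I_j} v \, ds \le (\log M + O(1))(\ell(Q) - \sum \ell(Q_j))$ together with the identity $\log M \cdot \sum \ell(Q_j) + \log M \cdot (\ell(Q) - \sum \ell(Q_j)) = \log M \cdot \ell(Q)$; the lower bound follows from $\int_{I \setminus \bigcup I_j} v \, ds \ge -O(\ell(Q))$. The main technical obstacle is justifying the passage $\delta \to 0$: while $v$ is uniformly bounded on $\Omega$, its gradient may blow up like $1/(1-|z|)$ as $|z| \to 1$, so one must verify that the weight $\phi \asymp 1-|z|$ tames the $\phi\,\partial_n v$ contribution from the truncation arcs at $|z|=1-\delta$ and from the radial sides reaching $\partial \mathbb{D}$, and interpret the top contribution as integration against the non-tangential boundary values of $v$ on the ``good'' set $I \setminus \bigcup \overline{I_j}$ (which exist $m$-a.e.\ and are bounded since $H[\sigma](\xi) \le M H[\sigma](z_Q) + O(1)$ there).
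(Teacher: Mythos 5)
Your proposal is correct and follows essentially the same route as the paper: Green's identity against the weight $\log(1/|z|)$ on a truncated stopping-time region, the cancellation $\log(1/|z|)\cdot|\nabla \log H[\sigma]| = O(1)$ from Harnack's gradient estimate to kill the $\phi\,\partial_n v$ terms, the stopping-time bounds $-O(1) \le v \le \log M + O(1)$ on $\Omega$ and $v = \log M + O(1)$ on the tops of the $Q_j$, and a limiting argument as the truncation is removed. The only (cosmetic) difference is that the paper takes the Green kernel $\log(r/|z|)$, which vanishes on the truncation circle $\{|z|=r\}$ and so removes one of the boundary terms you have to estimate by hand, and it justifies the limit $r \to 1$ via the subharmonicity of $\log H[\sigma]$ (monotonicity of the truncated integrals), a point you flag but should make explicit.
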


\begin{proof}[Proof of Theorem \ref{Htheorem}]
(a) $\Leftrightarrow$ (b).  Let $u$ be the Poisson extension of $\sigma$ to $\mathbb{D}$. Since $u$ is the real part of $(\alpha + F)/ (\alpha - F)$ for some $\alpha \in \partial \mathbb{D}$, a quick computation shows that
$$
\frac{2|F'(z)|}{1-|F(z)|^2} = \frac{|\nabla u (z)|}{u(z)}, \qquad z \in \mathbb{D}.
$$
Since $\Delta ( \log u ) = - |\nabla u|^2/u^2$, we have
\begin{align*}
\frac{4}{(1-|z|)^2} - \frac{4 |F'(z)|^2}{ (1-|F(z)|^2)^2 } & = \Delta \bigl  ( - \log (1-|z|^2) + \log u(z)  \bigr ) \\
& = \Delta \bigl ( \log H[\sigma] (z) \bigr ),  \quad z \in \mathbb{D} . 
\end{align*}
In other words,
$$
4\biggl ( 1 - \frac{(1-|z|^2)^2 |F'(z)|^2}{(1-|F(z)|^2)^2} \biggr ) \frac{1}{1-|z|^2} = \Delta \bigl ( \log H[\sigma] (z) \bigr )  (1-|z|^2), \qquad z \in \mathbb{D}.
$$
By Condition (6) of Theorem \ref{main-thm}, $F$ is an APHA mapping if and only if
$$
\Delta \bigl ( \log H[\sigma] (z) \bigr )  (1-|z|^2) dA(z)
$$
is a Carleson measure. This proves the equivalence of statements (a) and (b).

\bigskip
\noindent
 (b) $\Rightarrow$ (c).
Invoking the lower bound in Lemma \ref{Hlemma} with $M_0$ in place of $M$, where $M_0 > 0$ is a large positive constant which will be fixed momentarily, we get
$$
\sum_{Q_j \in \mathcal A(Q, M_0)} \ell(Q_j) \le \frac{C}{\log M_0} \, \ell(Q).
$$
We fix $M_0 \ge e^{2C}$ so that
$$
\sum_{Q_j \in \mathcal A(Q, M_0)} \ell(Q_j) \le \frac{1}{2} \, \ell(Q).
$$
Notice that for any $k \in \mathbb{N}$, each square in $\mathcal A(Q, M_0^k)$ is contained in one of the squares in $\mathcal A(Q, M_0^{k-1})$. Consequently, if $M_0^{N-1} \leq  M \leq  M_0^N$ for some positive integer $N$, then
$$
\sum_{Q_j \in \mathcal A(Q, M)} \ell(Q_j) \, \leq \, \frac{1}{2^{N-1}}  \, \ell(Q) \, \lesssim \, M^{-C} \ell(Q),
$$
where $C = (\log_2 M_0)^{-1}$. This proves (c).

\bigskip
\noindent
 (c) $\Rightarrow$ (b).
Pick $M_0 > 0$ large enough so that for any Carleson square $Q$, we have
$$
\sum_{Q_j \in \mathcal A(Q, M_0)} \ell(Q_j)  \le \frac{1}{2} \, \ell(Q).
$$
Fix a Carleson square $Q_0$ and consider the domain
$$
\Omega_0(Q_0) = Q_0 \setminus \bigcup_{Q_j \in \mathcal A(Q_0, M_0)} Q_j.
$$
For each $Q_j \in \mathcal A(Q_0, M_0)$, we form an analogous domain
$$
\Omega(Q_j) = Q_j \setminus \bigcup_{Q_l \in \mathcal A(Q_j, M_0)} Q_l.
$$
Set $\Omega_1(Q_0) = \bigcup_{Q_j \in \mathcal A(Q_0, M_0)} \Omega(Q_j)$. Continuing inductively, we arrive at the decomposition $Q_0 = \bigcup_n \Omega_n(Q_0)$.

In view of the upper bound in Lemma \ref{Hlemma}, if $Q$ is one of the Carleson squares appearing in the construction, then
$$
\int_{\Omega(Q)} \Delta \bigl ( \log H[\sigma] (z) \bigr) \log  \frac{1}{|z|} dA(z) \leq C(M_0) \ell(Q).
$$
Summing over all Carleson squares appearing in the construction, we obtain
$$
\int_{Q_0} \Delta \bigl( \log H[\sigma] (z) \bigr) \log  \frac{1}{|z|} dA(z) \lesssim C(M_0) \ell(Q),
$$
as desired.

\bigskip
\noindent
(d) $\Rightarrow$ (c).
Let $Q=Q_I$ be a Carleson square and $\{ Q_{I_j} \}$ be the subsquares that make up $\mathcal A(Q, M)$. By (\ref{eq:u-monotonicity}) and (\ref{bmoac}), for any point $\xi \in \bigcup I_j$, we have
$$
 \log H[\sigma](\xi)  \ge
\frac{1}{m(I)} \int_{I} \log H[\sigma](\xi) dm(\xi)  + \log M - O(1).
$$
Since $\log H[\sigma] \in \BMO(\partial \mathbb{D})$, the John-Nirenberg Theorem tells us that there exists a universal constant $C>0$ such that 
$$
\sum_{Q_j \in \mathcal A(Q, M)} \ell(Q_j) \, \lesssim  \, e^{-C \log M} \ell(Q) \, = \, M^{-C} \ell(Q).
$$

\bigskip
\noindent
(c) $\Rightarrow$ (d).
Note that assumption (c) gives that $\sup_{r<1} H[\sigma] (r  \xi) < \infty$ for $m$ a.e.~$\xi \in \partial \mathbb{D}$. It follows that
$$
H[\sigma] (\xi) \, \leq \, \limsup_{r \to 1} H[\sigma] (r \xi) \, < \, \infty, \quad m \text{ a.e.~} \xi \in \partial  \mathbb{D}. 
$$
Pick $M > 1$ large enough so that $M^{-C} < 1/10$. By the estimate in (c), we have  
$$
m \bigl ( \bigl \{ \xi \in I : H[\sigma](\xi) \le M H[\sigma](z_I) \bigr \} \bigr ) \ge \frac{9}{10}\, m(I),
$$
for any arc $I \subset \partial \mathbb{D}$. Recall from (\ref{eq:u-monotonicity}) that $H[\sigma](z_I) \lesssim H[\sigma](\xi)$ for any $\xi \in I$. Consequently,
$$
m \bigl ( \bigl \{ \xi \in I : H[\sigma](z_I) \lesssim H[\sigma](\xi) \le M  H[\sigma](z_I) \bigr \} \bigr ) \ge \frac{9}{10}\, m(I),
$$
for any arc $I \subset \partial \mathbb{D}$. A classical result of Stromberg tells us that $\log H[\sigma] \in \BMO(\partial \mathbb{D})$ and
$$
\frac{1}{m(I)} \int_I \log H[\sigma] dm = \log H[\sigma](z_I) + O(M).
$$
See Exercise 4 on \cite[p.~261]{garnett}.
\end{proof}

It remains to show Lemma \ref{Hlemma}.

\begin{proof}[Proof of Lemma \ref{Hlemma}]
{\em Step 1.} We fix an $1/2 < r < 1$. Applying Green's identity to the functions $\log H[\sigma] (z)$ and $\log \frac{r}{|z|}$ in the domain
$\Omega(r) =  \Omega \cap \{z : |z| \leq r\}$, 
 we get
$$
\int_{\partial \Omega(r)} \log \bigl ( H[\sigma] (z) \bigr) \partial_{\bf n} \log \frac{r}{|z|} ds(z)- \int_{\partial \Omega(r)}
\log \frac{r}{|z|}  \partial_{\bf n} \bigl ( \log  H[\sigma] (z) \bigr) ds (z) =
$$
$$
= - \int_{\Omega(r)} \Delta \bigl ( \log  H[\sigma] (z) \bigr) \log \frac{r}{|z|} d A(z).
$$
For brevity, we record the above equation as ${\rm I}(r) - {\rm II}(r) = - {\rm III}(r)$.

\medskip

{\em Step 2.}
Since $H[\sigma] (z) = u(z) / (1-|z|^2)$, Harnack's inequality tells us that
$$
\log \frac{r}{|z|} \cdot \partial_{\bf n} \bigl ( \log H[\sigma] (z) \bigr)
$$
 is uniformly bounded  in $\Omega(r)$, independent of $1/2 <r<1$. Consequently, the second integral ${\rm II}(r)$ is bounded by a constant multiple of $\ell(Q)$ and
 ${\rm III}(r) = - {\rm I}(r) + O(\ell(Q))$.
  
 \medskip
 
 {\em Step 3.} We now focus on estimating the first integral ${\rm I}(r)$. Let $$E(r) = \bigl \{ \xi \in \partial  \mathbb{D} : r \xi \in \Omega(r) \bigr \}$$ and $\mathcal{A} (r) \subset \mathcal A(Q, M)$ be the subcollection of squares with $\ell (Q_j) > 1- r$. As $\partial_{\bf n} \log \frac{r}{|z|}$ vanishes on the radial portion of $\partial \Omega(r)$, ${\rm I}(r)$ is equal to
$$
\int_{\partial Q \cap \{ 1 - |z| = \ell(Q) \} } 
\log \bigl ( H[\sigma] (z) \bigr ) ds(z) - \sum_{Q_j  \in \mathcal{A}(r)} \int_{\partial Q_j \cap \{ 1 - |z| = \ell(Q_j) \} }  \log \bigl (  H[\sigma] (z)\bigr ) ds (z)
$$
$$
- \int_{E(r)}  \log \bigl ( H[\sigma] (r \xi) \bigr ) dm(\xi) + O(\ell(Q)),
$$
where the error term comes from replacing $ \partial_{\bf n} \log (r/|z|) = \mp 1/|z|$ with $\mp 1$ and using the estimate
$ H[\sigma] (z) \leq \sigma (\partial \mathbb{D})/(1-|z|)^2$ for $z \in \mathbb{D}$.

By Harnack's inequality, $\log  H[\sigma] (z)  -\log  H[\sigma] (w)  = O(1)$  if the hyperbolic distance $d_h(z, w) = O(1)$. Therefore,
$$
{\rm I}(r) \, = \, \log \bigl (H[\sigma] (z_Q) \bigr ) \ell(Q)
- \sum_{Q_j \in \mathcal{A}(r)} \log \bigl ( H[\sigma] (z_{Q_j}) \bigr ) \ell(Q_j)
$$
$$
- \int_{E(r)} \log \bigl ( H[\sigma] (r \xi) \bigr ) dm(\xi) + O(\ell(Q)).
$$
By construction,
$\log H[\sigma] (z_{Q_j}) = \log H[\sigma] (z_Q) + \log M + O(1)$ while
$$
\log H[\sigma] (z_Q) + \log M + O(1) \, \ge \, 
\log H[\sigma] (r \xi ) \, \ge \, \log H[\sigma] (z_Q) - O(1),
$$
for any $ \xi \in  E(r)$.
 As $\sum_{Q_j \in \mathcal{A}(r)} \ell (Q_j) + m(E) = \ell (Q)$, the above estimates imply that
 $$
- \log M \cdot \ell(Q) - O(\ell(Q)) \, \le \, {\rm I}(r) \, \le \, - \log M \cdot \sum_{Q_j \in \mathcal A(r)} \ell(Q_j) + O(\ell(Q))
 $$
 and so
\begin{equation}
\label{eq:IIIr-estimate}
 \log M \cdot \ell(Q) + O(\ell(Q)) \, \ge \, {\rm III}(r) \, \ge \, \log M \cdot \sum_{Q_j \in \mathcal A(r)} \ell(Q_j) - O(\ell(Q)).
\end{equation}
  
 {\em Step 4.}
Finally, since the function $\log H[\sigma]$ is subharmonic, the integrals ${\rm III}(r)$ increase to
$$
\int_{\Omega} \Delta \bigl ( \log  H[\sigma] (z) \bigr) \log \frac{1}{|z|} d A(z)
$$
as $r \to 1$. As the estimates in (\ref{eq:IIIr-estimate}) are uniform in $1/2 < r < 1$, the lemma follows after taking $r \to 1$.
\end{proof}

\section{Lyapunov exponents}

Let $F$ be an inner function with $F(0) = 0$ and $\{ \sigma_\alpha : \alpha \in \partial \mathbb{D}\}$ be the collection of its Aleksandrov-Clark measures. As explained in the introduction, each measure $\sigma_\alpha$ is a singular probability measure on the unit circle supported on the set of points where $F$ has radial limit $\alpha$. Recall that if the measure
$$
\sigma_\alpha = \sum_{F(\beta)=\alpha} |F'(\beta)|^{-1} \delta_\beta, \qquad \alpha \in \partial \mathbb{D}, 
$$
is discrete, then its Lyapunov exponent $\chi(\sigma_\alpha, F)$ is given by 
$$
\chi(\sigma_\alpha, F)  = \int_{\partial \mathbb{D}} \log |F'(\xi)| d \sigma_\alpha (\xi) = \sum_{F(\beta)=\alpha} |F'(\beta)|^{-1} \log |F'(\beta)|, \qquad \alpha \in \partial \mathbb{D}. 
$$

\begin{lemma}
\label{nevanlinna-ac}
An inner function $F$ has finite entropy if and only if for $m$ a.e.~$\alpha \in \partial \mathbb{D}$, the measure $\sigma_\alpha$ is discrete and the function 
 $\chi(\sigma_\alpha, F)$ belongs to $L^1(\partial \mathbb{D})$.
\end{lemma}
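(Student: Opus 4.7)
The essential tool is Aleksandrov's disintegration identity: for an inner function $F$ with $F(0) = 0$ and any Borel function $\phi \colon \partial \mathbb{D} \to [0, \infty]$,
\begin{equation*}
\int_{\partial \mathbb{D}} \phi \, dm \;=\; \int_{\partial \mathbb{D}} \biggl( \int_{\partial \mathbb{D}} \phi \, d\sigma_\alpha \biggr) dm(\alpha).
\end{equation*}
I first reduce to the case $F(0) = 0$ by composing with a M\"obius automorphism, noting that such a composition alters $\log|F'|$ only by a bounded harmonic summand and preserves both finite entropy and the qualitative structure of the Aleksandrov--Clark family. Julia's lemma then gives $|F'(\xi)| \geq 1$ at every boundary point where the angular derivative exists, so $\log|F'|$ is non-negative $m$-a.e. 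Whenever $\sigma_\alpha$ is discrete, the inner integral in the disintegration identity applied to $\phi = \log|F'|$ equals the Lyapunov exponent $\chi(\sigma_\alpha, F)$, yielding the master equation
\begin{equation*}
\int_{\partial \mathbb{D}} \log|F'|\, dm \;=\; \int_{\partial \mathbb{D}} \chi(\sigma_\alpha, F)\, dm(\alpha) \;\in\; [0, \infty].
\end{equation*}

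For the forward direction, $F' \in \mathcal{N}$ combined with Fatou's theorem for the Nevanlinna class gives finite angular derivatives $m$-a.e., which is equivalent to $\sigma_\alpha$ being discrete for a.e.\ $\alpha$ by \cite[Theorem 9.6.1]{cima}. The finite-entropy hypothesis also supplies $\int \log|F'|\, dm < \infty$ through the formula from \cite[Lemma~3.3]{inner} recalled in the proof of Lemma~\ref{finite-entropy-characterization}, and the master identity then delivers $\chi(\sigma_\alpha, F) \in L^1(\partial \mathbb{D})$. For the converse, discreteness of $\sigma_\alpha$ a.e.\ forces angular derivatives a.e.\ by the same characterization, the master identity converts the $L^1$ bound on $\chi(\sigma_\alpha, F)$ into $\int \log|F'|\, dm < \infty$, and what remains is to promote boundary integrability of $\log|F'|$ to finite entropy.

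This last promotion is the delicate point. Finite entropy is equivalent to boundedness of $M(r) = (2\pi)^{-1} \int_{|z|=r} \log \frac{1-|F|^2}{1-|z|^2} \, |dz|$ as $r \to 1$ by \cite[Lemma 3.3]{inner}, and Fatou applied to the subharmonic function $G(z) = \log \frac{1-|F(z)|^2}{1-|z|^2}$ (whose non-tangential boundary values are $\log|F'|$) yields only the one-sided bound $\int \log|F'|\, dm \leq \lim_{r\to 1} M(r)$. To reverse it, I would exploit the pointwise identity
\begin{equation*}
G(z) \;=\; \int_{\partial \mathbb{D}} \log H[\sigma_\alpha](z)\, dm(\alpha), \qquad z \in \mathbb{D},
\end{equation*}
which is immediate from the formula $H[\sigma_\alpha](z) = (1-|F(z)|^2)/\bigl((1-|z|^2)|\alpha - F(z)|^2\bigr)$ recalled in Section 6, together with the classical average $\int_{\partial \mathbb{D}} \log|\alpha - w|^2\, dm(\alpha) = 0$ for $|w|<1$. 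Integrating over $\xi \in \partial \mathbb{D}$ and applying Fubini rewrites $M(r)$ as the $\alpha$-average of the boundary integrals $\int_{\partial \mathbb{D}} \log H[\sigma_\alpha](r\xi)\, dm(\xi)$; controlling these uniformly by $\chi(\sigma_\alpha, F)$, via the subharmonicity of $\log H[\sigma_\alpha]$ in $z$ and monotone passage to the limit on the atomic support of each $\sigma_\alpha$, should yield $\lim_{r\to 1} M(r) < \infty$ and hence finite entropy. I expect justifying this last interchange---uniformly in $\alpha$---to be the main technical obstacle.
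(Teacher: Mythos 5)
Your overall architecture is the same as the paper's: reduce to $F(0)=0$, apply the Aleksandrov disintegration theorem to $\log|F'|$ to get $\int_{\partial\mathbb{D}}\log|F'|\,dm=\int_{\partial\mathbb{D}}\chi(\sigma_\alpha,F)\,dm(\alpha)$, use the a.e.\ equivalence between discreteness of $\sigma_\alpha$ and finiteness of angular derivatives (the paper gets the converse implication from L\"owner's lemma, you cite the known characterization; both are fine), and read off both directions from the master identity. The forward direction is complete and correct.

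The gap is exactly where you say it is, but your proposed repair does not close it. In the converse direction you are left needing: ``$F$ inner, angular derivative finite a.e., $\log|F'|\in L^1(\partial\mathbb{D})$ $\Rightarrow$ $F'\in\mathcal N$,'' i.e.\ that integrability of the boundary values forces boundedness of $M(r)=\int_{\partial\mathbb{D}}\log\frac{1-|F(r\xi)|^2}{1-r^2}\,dm(\xi)$. (This is genuinely not automatic for a nonnegative subharmonic function -- the square of a Poisson kernel has boundary values $0$ a.e.\ but unbounded means -- so some structure of $F$ must be used; the paper treats this step as immediate.) Your route via $G(z)=\int\log H[\sigma_\alpha](z)\,dm(\alpha)$ is circular: since $\log|\alpha-F(z)|$ is harmonic and $F(0)=0$, one has $\int_{\partial\mathbb{D}}\log H[\sigma_\alpha](r\xi)\,dm(\xi)=M(r)$ for \emph{every} $\alpha$, so the bound you want, $\int_{\partial\mathbb{D}}\log H[\sigma_\alpha](r\xi)\,dm(\xi)\le\chi(\sigma_\alpha,F)+O(1)$, would assert $M(r)\lesssim\chi(\sigma_\alpha,F)$ for a single $\alpha$ -- much stronger than the statement being proved and not something the hypotheses give. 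The correct fix is a one-line application of Julia's lemma: at any $\xi$ where the angular derivative is finite, taking $z=r\xi$ in Julia's inequality gives $\frac{1-|F(r\xi)|}{1+|F(r\xi)|}\le|F'(\xi)|\,\frac{1-r}{1+r}$, hence $\frac{1-|F(r\xi)|^2}{1-r^2}\le 4|F'(\xi)|$ for all $0<r<1$. Integrating, $M(r)\le\log 4+\int_{\partial\mathbb{D}}\log|F'|\,dm<\infty$, and finite entropy follows since $\log^+|F'(r\xi)|\le\log\frac{1-|F(r\xi)|^2}{1-r^2}$ by the Schwarz--Pick inequality (or by \cite[Lemma 3.3]{inner}). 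With this substitution your argument is complete.
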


\begin{proof}
Suppose that $F$ has finite entropy. Since $|F' (\xi)|< \infty$, for $m$ a.e.~$\xi \in \partial \mathbb{D}$, the measure $\sigma_\alpha$ is discrete for $m$ a.e.~$\alpha \in \partial \mathbb{D}$. By the Aleksandrov disintegration theorem
$$
dm(\xi) = \int_{\partial \mathbb{D}} d\sigma_\alpha(\xi) dm(\alpha),
$$
we have
\begin{align}
\label{ADT}
\int_{\partial \mathbb{D}} \log |F'(\xi)| dm(\xi) & = \int_{\partial \mathbb{D}} \int_{\partial \mathbb{D}} \log |F'(\xi)| d\sigma_\alpha(\xi) dm(\alpha) \nonumber \\
& = \int_{\partial \mathbb{D}} \chi(\sigma_\alpha, F) dm(\alpha). 
\end{align}
Conversely, assume that $\sigma_\alpha$ is a discrete measure and $\chi(\sigma_\alpha, F) < \infty$ for $m$ a.e.~$\alpha \in \partial \mathbb{D}$. Let $E \subset \partial \mathbb{D}$ be the Lebesgue measure zero set of points $\alpha \in \partial \mathbb{D}$ for which $\sigma_\alpha$ is not discrete. By L\"owner's lemma, its pre-image $F^{-1}(E) \subset \mathbb{D}$ also has Lebesgue measure zero and so $|F'(\xi)|< \infty $ for $m$ a.e.~$\xi \in \partial \mathbb{D}$. 
Consequently, if $\chi(\sigma_\alpha, F) \in L^1(\partial \mathbb{D})$, then the identity \eqref{ADT} makes sense and shows that $F$ has finite entropy. 
\end{proof}

The following lemma says that the Lyapunov exponent $\chi(\sigma_\alpha, F)$ defines a lower semicontinuous function on the unit circle:

\begin{lemma}
\label{lsc-lyapunov}
Let $F$ be an inner function.
Suppose $\alpha_n$ is a sequence of points on the unit circle converging to $\alpha$. If the measures $\{\sigma_{\alpha_n}\}$ are discrete and the Lyapunov exponents $\chi(\sigma_{\alpha_n}, F)$ are uniformly bounded, then
$\sigma_{\alpha}$ is also discrete and $\chi(\sigma_\alpha, F) \le \liminf_{n \to \infty} \chi(\sigma_{\alpha_n}, F)$.
\end{lemma}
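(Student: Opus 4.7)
The plan is to combine the weak continuity of the Aleksandrov--Clark correspondence with the lower semicontinuity of $\log|F'|$ on the unit circle, and then deduce from the finiteness of the limit Lyapunov exponent that $\sigma_\alpha$ must be purely atomic. Adopting the convention $|F'(\beta)| = +\infty$ whenever $F$ fails to have a finite angular derivative at $\beta$, the first task is to establish that the extended function $\log|F'|\colon \partial \mathbb{D} \to (-\infty, +\infty]$ is lower semicontinuous and bounded below. I would obtain this from the Julia--Carath\'eodory characterization
\[|F'(\beta)| \;=\; \liminf_{z \to \beta,\, z \in \mathbb{D}} \frac{1-|F(z)|^2}{1-|z|^2}.\]
Given $\beta_k \to \beta$, one picks $z_k \in \mathbb{D}$ with $|z_k - \beta_k| \to 0$ realizing the liminf at $\beta_k$ up to an error $1/k$; these $z_k$ also tend to $\beta$, so the liminf defining $|F'(\beta)|$ is at most $\liminf_k |F'(\beta_k)|$. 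Julia's inequality provides the uniform lower bound $|F'(\beta)| \ge (1-|F(0)|)/(1+|F(0)|)$ needed to bound $\log|F'|$ below.

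Since the map $\alpha \mapsto \sigma_\alpha$ is weakly continuous (a property recalled among the bulleted items in the introduction), $\sigma_{\alpha_n} \to \sigma_\alpha$ weakly on $\partial \mathbb{D}$. For each $M > 0$, the truncated function $g_M := \min(\log|F'|, M)$ is bounded and lower semicontinuous, so by the portmanteau theorem
\[\int_{\partial \mathbb{D}} g_M \, d\sigma_\alpha \;\le\; \liminf_{n \to \infty} \int_{\partial \mathbb{D}} g_M \, d\sigma_{\alpha_n} \;\le\; \liminf_{n \to \infty} \chi(\sigma_{\alpha_n}, F).\]
Letting $M \to \infty$ and applying monotone convergence on the left then yields
\[\chi(\sigma_\alpha, F) \;=\; \int_{\partial \mathbb{D}} \log|F'| \, d\sigma_\alpha \;\le\; \liminf_{n \to \infty} \chi(\sigma_{\alpha_n}, F) \;<\; \infty,\]
which is the desired inequality.

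To deduce that $\sigma_\alpha$ is discrete, decompose $\sigma_\alpha = \sigma_\alpha^d + \sigma_\alpha^c$ into its atomic and continuous parts. Since $\sigma_\alpha$ is carried by $\{\beta \in \partial \mathbb{D} : F(\beta) = \alpha\}$ and the bulleted identity $\sigma_{F(\beta)}(\{\beta\}) = 1/|F'(\beta)|$ says that on this set the points with $|F'(\beta)| < \infty$ are exactly the atoms of $\sigma_\alpha$, the continuous part $\sigma_\alpha^c$ must be concentrated on $\{|F'| = +\infty\}$. Hence
\[\int_{\partial \mathbb{D}} \log|F'|\, d\sigma_\alpha^c \;=\; +\infty \cdot \sigma_\alpha^c(\partial \mathbb{D}),\]
and the bound of the previous paragraph forces $\sigma_\alpha^c \equiv 0$. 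The main technical point is the lower semicontinuity of $|F'|$ in the first paragraph; the subsequent steps are routine applications of the portmanteau theorem and the atom--angular-derivative dictionary. Notably, the argument uses none of the APHA machinery from the rest of the paper, only general properties of inner functions and Aleksandrov--Clark measures.
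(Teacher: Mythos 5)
Your proof is correct and follows essentially the same route as the paper's: lower semicontinuity of $|F'|$ on $\partial\mathbb{D}$ via Julia--Carath\'eodory, weak convergence of the Clark measures, and a portmanteau/truncation argument, together with the atom--angular-derivative dictionary. The only (immaterial) difference is the order of steps: the paper first deduces discreteness of $\sigma_\alpha$ from the uniform Chebyshev bounds $\sigma_{\alpha_n}(\{|F'|\le N\})\ge 1-M/\log N$ passed to the limit, whereas you first establish finiteness of $\int \log|F'|\,d\sigma_\alpha$ and then rule out a continuous part.
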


In view of Julia's lemma, for any $\alpha \in \partial \mathbb{D}$,
$$
|F'(\alpha)|= 
\limsup_{z \to \alpha} \frac{1-|z|^2}{|z-\alpha|^2} \cdot
\frac{|F(\alpha)-F(z)|^2}{1-|F(z)|^2}.
$$
In particular, if $\alpha_n \to \alpha$, then
$
|F' (\alpha)| \leq \liminf_{n \to \infty} |F'(\alpha_n)|
$
and so the function $\alpha \to |F'(\alpha)|$ is lower semicontinuous on the unit circle.

\begin{proof}
By composing $F$ with a M\"obius transformation, we may assume that $F(0) = 0$. There exists a constant $M>0$ such that $\chi(\sigma_{\alpha_n}, F) \le M$ for any $n \in \mathbb{N}$. From the definition of the Lyapunov exponent, it is easy to see that
$$
\sigma_{\alpha_n} \bigl (\{ \beta : |F'(\beta)| \le N \} \big ) \ge 1 - \frac{M}{\log N}, \qquad n = 1, 2,\dots.
$$
As $\sigma_{\alpha_n}$ converges weakly to $\sigma_\alpha$ and $\beta \to |F'(\beta)|$ is lower semicontinuous, we also have
$$
\sigma_{\alpha} \bigl (\{ \beta : |F'(\beta)| \le N \} \big ) \ge 1 - \frac{M}{\log N}.
$$
Consequently, the limiting measure $\sigma_\alpha$ is discrete and it makes sense to talk about its Lyapunov exponent. The weak convergence $\sigma_{\alpha_n} \to \sigma_\alpha$ and the lower semicontinuity of $\beta \to \log |F'(\beta)|$ give
$
\chi (\sigma_\alpha, F) \leq \liminf_{n \to \infty} \chi (\sigma_{\alpha_n}, F).
$
\end{proof}

In the remainder of this section, we show Theorem \ref{ac-measures-for-bmo-inner-functions} which says that for AHPA mappings, $\chi(\sigma_\alpha, F)$ coincides with a $C^\infty(\partial \mathbb{D})$ function on a set of full Lebesgue measure.

\subsection{Weighted composition operators and APHA maps}

Let $F$ be an inner function satisfying the conditions of Theorem \ref{main-thm} with $F(0) = 0$. As usual, we write $O_{F'}$ for the outer part of $F'$. We consider the family of weighted composition operators
\begin{equation}
\label{eq:wco}
 C_s g = (g \circ F) O_{F'}^{-s}
\end{equation}
acting on the weighted Bergman spaces $A^2_\gamma$ with $\gamma > -1$, which consist of analytic functions $g$ on $\mathbb{D}$ with
$$
\|g\|_{A^2_\gamma}^2 = \frac{1}{c_\gamma} \int_{\mathbb{D}} |g (z)|^2 (1-|z|^2)^{\gamma} dA(z) < \infty,
$$
where the constant $c_\gamma$ is chosen so that $\| {\bf 1} \|_{A^2_\gamma} = 1$ and ${\bf 1}$ is the function that is identically 1 on the unit disk.
In terms of Taylor series, we have
\begin{equation*}
g(z) = \sum_{n=0}^\infty a_n z^n \qquad \implies \qquad
\|g\|_{A^2_\gamma}^2 \asymp
\sum_{n=0}^\infty (n+1)^{-1-\gamma} |a_n|^2.
\end{equation*}

\begin{lemma}
\label{sigma-half-plane}
Fix a negative real number $\sigma < 0$. Then, for $\gamma \ge -1 -2 \sigma$, the weighted composition operators $C_s$ define a differentiable function from the interval $(\sigma, \infty)$ to the Banach space of bounded linear operators from $A^2_\gamma$ to itself. Furthermore, the Banach space derivative $\dot C_s = (d/ds) C_s$ coincides with the pointwise derivative $g(z) \to (d/ds)|_{s=0} (C_s g(z)).$
\end{lemma}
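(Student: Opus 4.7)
The plan is to establish three things in order: norm boundedness of each $C_s$ on $A^2_\gamma$, weak (hence strong) holomorphy of the $A^2_\gamma$-valued function $s \mapsto C_s g$ on $\{\re s > \sigma\}$, and operator-norm differentiability via a vector-valued Cauchy integral, which automatically identifies the Banach derivative with the pointwise one. The main obstacle is the boundedness step in the regime $\sigma < s < 0$, where one must absorb a singular factor $(1-|z|)^{2s}$ coming from $|O_{F'}|^{-2s}$ into the Bergman weight; the hypothesis $\gamma \ge -1 - 2\sigma$ is exactly what makes this absorption succeed.

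For the boundedness, I would write
$$
\|C_s g\|_{A^2_\gamma}^2 = c_\gamma^{-1} \int_{\mathbb{D}} |g|^2 \, d\mu_s, \qquad \mu_s(E) := \int_{F^{-1}(E)} |O_{F'}|^{-2s}(1-|z|^2)^\gamma \, dA,
$$
and aim to verify that $\mu_s$ is an $A^2_\gamma$-Carleson measure. Condition (4) of Theorem \ref{main-thm} gives $|O_{F'}(z)| \asymp (1-|F(z)|)/(1-|z|)$, and combined with $|F(z)| \le |z|$ (from $F(0)=0$) this yields $|O_{F'}(z)| \gtrsim 1$. For $s \ge 0$ this immediately gives $|O_{F'}|^{-2s}$ bounded, so $\mu_s \lesssim F_*\nu_\gamma$, which is an $A^2_\gamma$-Carleson measure by the classical boundedness of composition operators on weighted Bergman spaces. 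For $\sigma < s < 0$ and $z \in F^{-1}(Q_I)$, using $1-|F(z)| \lesssim m(I)$ one obtains $|O_{F'}(z)|^{-2s} \lesssim m(I)^{-2s}(1-|z|)^{2s}$, whence
$$
\mu_s(Q_I) \lesssim m(I)^{-2s} \int_{F^{-1}(Q_I)} (1-|z|^2)^{\gamma+2s}\,dA(z) \lesssim m(I)^{-2s} \cdot m(I)^{\gamma+2s+2} = m(I)^{\gamma+2},
$$
where the second inequality uses $\gamma + 2s > -1$ (guaranteed by $\gamma \ge -1 - 2\sigma$) together with the classical Carleson estimate for $F$ on $A^2_{\gamma+2s}$. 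These estimates are locally uniform in $s$ and extend verbatim to complex $s$ with $\re s > \sigma$, since $|O_{F'}^{-s}| = |O_{F'}|^{-\re s}$.

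For each $g \in A^2_\gamma$ and each $z \in \mathbb{D}$, the scalar function $s \mapsto C_s g(z) = g(F(z))\, O_{F'}(z)^{-s}$ is entire in $s$. Since the family $\{C_s g : \re s > \sigma\}$ is locally uniformly bounded in $A^2_\gamma$ and point evaluations are continuous functionals on $A^2_\gamma$, the $A^2_\gamma$-valued map $s \mapsto C_s g$ is weakly holomorphic, and hence strongly holomorphic on $\{\re s > \sigma\}$. Fix $s_0 \in (\sigma,\infty)$ and choose $r > 0$ with $\overline{D_r(s_0)} \subset \{\re s > \sigma\}$, setting $K = \sup_{w \in \partial D_r(s_0)} \|C_w\|_{\mathcal{B}(A^2_\gamma)}$. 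The Cauchy integral formula produces, for $|h| < r/2$,
$$
C_{s_0+h}\,g - C_{s_0}\,g - h \,\dot C_{s_0} g \;=\; \frac{h^2}{2\pi i} \oint_{\partial D_r(s_0)} \frac{C_w g}{(w-s_0)^2 (w - s_0 - h)} \, dw,
$$
and taking norms gives $\|C_{s_0+h} - C_{s_0} - h\,\dot C_{s_0}\|_{\mathcal{B}(A^2_\gamma)} \lesssim h^2 K / r^2$. This proves differentiability in operator norm, and by construction $\dot C_{s_0} g(z) = -g(F(z))\, O_{F'}(z)^{-s_0} \log O_{F'}(z)$ coincides with the pointwise $s$-derivative of $C_s g(z)$ at $s = s_0$.
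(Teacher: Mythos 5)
Your proof is correct, but it takes a genuinely different route from the paper's. For the boundedness of $C_s$, the paper applies the Littlewood--Paley identity and must then control two integrals, one involving $(g\circ F)'O_{F'}^{-s}$ and one involving $(g\circ F)\,O_{F'}^{-s}(\log O_{F'})'$; the second forces a comparison of $|O_{F'}|$ with $|F'|$, which fails near the critical points of $F$, and the paper repairs this with the Carleson--Newman annuli of Corollary \ref{good-annuli} together with a subharmonicity argument transferring integrals over hyperbolic balls around critical points to nearby annuli on which the inner factor is bounded below. You bypass all of this: by writing $\|C_sg\|^2_{A^2_\gamma}$ as an integral of $|g|^2$ against the pullback measure $\mu_s$ and invoking the Carleson embedding theorem for $A^2_\gamma$, you never differentiate and never need to compare $O_{F'}$ with $F'$, so the annuli construction becomes unnecessary; the hypothesis $\gamma\ge-1-2\sigma$ enters transparently as $\gamma+2\re s>-1$, playing the same role as the condition $\beta=2+\gamma+2s\ge 1$ in the paper's preimage-counting estimate (Step 1). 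Your treatment of differentiability via complexification and the vector-valued Cauchy formula is also cleaner than the paper's appeal to the mean value theorem and dominated convergence, and it yields analyticity of $s\mapsto C_s$ for free. The one step worth making explicit is the passage from ``locally bounded in $A^2_\gamma$ and holomorphic under point evaluations'' to strong holomorphy: Dunford's theorem as usually stated requires weak holomorphy against all of $(A^2_\gamma)^*$, so you should either cite the standard strengthening for locally bounded functions tested against a separating family of functionals, or observe that $s\mapsto C_sg$ is norm-continuous (by dominated convergence, using your uniform majorants) and apply the vector-valued Morera theorem; this is routine and does not affect correctness.
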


\begin{proof} 

{\em Step 0.} By the Littlewood-Paley identity, we have
$$
\|C_s g \|_{A^2_\gamma}^2 \asymp \bigl | g (0)^2 O_{F'}(0)^{-2s} \bigr | + \int_{\mathbb{D}} \bigl | \bigl ((g \circ F) (z)  O_{F'}^{-s} (z) \bigr )' \bigr |^2 (1-|z|)^{2+\gamma} dA(z).
$$
The constant term $g(0)^2 O_{F'}(0)^{-2s}$ varies differentiably in $s \in \mathbb{R}$ and its modulus is bounded by $|O_{F'}(0)|^{-2s} \| g \|_{A_\gamma^2}^2$. Meanwhile, the above integral is dominated by twice the sum ${\rm I} + {\rm II}$, where
$$
{\rm I} = \int_{\mathbb{D}} \bigl | (g \circ F)' (z) O_{F'}^{-s} (z) \bigr |^2 (1-|z|)^{2+\gamma} dA(z)
$$
and
$$
{\rm II} = s^2 \int_{\mathbb{D}} \bigl | (g \circ F (z)) \cdot O_{F'} (z)^{-s} (\log O_{F'} (z))' \bigr |^2 (1-|z|)^{2+\gamma} dA(z).
$$

\medskip

{\em Step 1.} By Statement (4) of Theorem \ref{main-thm}, we have
$$
|O_{F'} (z)| \asymp \frac{1-|F(z)|}{1-|z|}, \qquad  z \in \mathbb{D}. 
$$
A change of variables shows that
$$
{\rm I} \lesssim  \int_{\mathbb{D}} |g'(z)|^2 N(z) dA(z),
$$
where
$$
N(z) =  ( 1-|z|  )^{- 2 s}  \sum_{F(w) = z}   (1-|w|)^{2+\gamma + 2 s}.
$$
Note that by the remark following Lemma \ref{littlewood}, there exists a constant $C = C_F >0$ such that
$$
\sum_{F(w) = z}  (1-|w|) \leq C (1-|z|), \qquad z \in \mathbb{D}. 
$$
In particular, for any $\beta \geq 1$, we have
$$
\sum_{F(w) = z}   (1-|w|)^\beta \leq C^{\beta} (1-|z|)^{\beta}, \qquad z \in \mathbb{D}.
$$
Taking $\beta = 2+\gamma + 2 s \geq 1$, we deduce 
\begin{equation*}
 N(z)  = (1-|z|)^{- 2 s} \sum_{F(w) = z} (1-|w|)^{2+\gamma + 2 s} 
 \lesssim (1-|z|)^{2+\gamma}.
\end{equation*}
Consequently, ${\rm I} \lesssim \|g \|^2_{A^2_\gamma}$.
 
 \medskip

{\em Step 2.} We now estimate II under the additional assumption that $|O_{F'} (z)| \asymp |F'(z)|$ at all points $z \in \mathbb{D}$. While this simplifying assumption does not always hold, the computation below will serve as a guide in the general case. Since $\log O_{F'} $ lies in the Bloch space, we have
$$
{\rm II} \lesssim \int_{\mathbb{D}} \bigl | (g \circ F) (z) \cdot O_{F'} (z)^{-s} \bigr |^2 (1-|z|)^{\gamma} dA(z).
$$
Using that $|F'(z)| \asymp |O_{F'}(z)| \asymp (1-|F(z)|)/ (1-|z|)$, $z \in \mathbb{D}$, we get 
$$
{\rm II} \lesssim \int_{\mathbb{D}} \bigl | (g \circ F) (z) |^2  \biggr(\frac{1-|F(z)|}{1-|z|}\biggr)^{- 2 s - 2} |F'(z)|^2 (1-|z|)^{\gamma} dA(z).
$$
Now, a change of variables shows that
$$
{\rm II} \lesssim  \int_{\mathbb{D}} |g(z)|^2 N_2(z) dA(z),
$$
where
\begin{align*}
N_2(z) & = \sum_{F(w)=z}  \biggl ( \frac{1-|z|}{1-|w|} \biggr )^{- 2 - 2  s}  (1-|w|)^\gamma \\
& = (1-|z|)^{-2- 2  s}  \sum_{F(w)=z} (1-|w|)^{\gamma+2+2 s} . 
\end{align*}
Arguing as above shows that $N_2(z) \lesssim (1-|z|)^\gamma$ if $2+\gamma + 2 s \ge 1$. We conclude that ${\rm II} \lesssim \|g \|^2_{A^2_\gamma}$ holds under the assumption that $|O_{F'} (z)| \asymp |F'(z)|$ at all points $z \in \mathbb{D}$.

\medskip

{\em Step 3.} Recall that for an inner function $F$ satisfying the conditions of Theorem \ref{main-thm}, the inner part of $F'$ is a Carleson-Newman Blaschke product $B$. 
Corollary \ref{good-annuli} produces constants $m, \delta > 0$, $N \in \mathbb{N}$ and a collection of round annuli $A_c = A_h(c, r_c, R_c)$  associated to the critical points $\{c \in \mathbb{D} : F'(c)=0 \}$ of $F$ such that:
\begin{enumerate}
\item[(1)] $A_c \subset \{z \in \mathbb{D} : 1 < d_h (z,c) <2 \}$.
\item[(2)] $R_c/r_c \ge 1 + m$.
\item[(3)] $|B(z)| > \delta$ on any annulus $A_c$.
\item[(4)] The collection $\{A_c: F'(c) = 0 \}$ is {\em quasi-disjoint} in the sense that any point in the unit disk is contained in at most $N$ such annuli.
\end{enumerate}

By (2) and the maximum modulus principle or subharmonicity considerations, for any holomorphic function $\varphi$ on the unit disk we have
\begin{equation}
\label{annuli}
   \int_{B_{h}(c,1)} |\varphi(z)| (1-|z|)^{\gamma} dA(z) \lesssim \int_{A_c} |\varphi(z)| (1-|z|)^{\gamma} dA(z), \qquad c \in \mathbb{D}. 
\end{equation}
Form the sets
 $$U = \bigcup_{c:\,F'(c) = 0} B_h(c, 1) \qquad \text{and} \qquad \mathcal A = \bigcup_{c:\, F' (c)= 0} A_c.$$
By Lemma \ref{carleson-newman}(a), $|B(z)|$ is bounded below on $\mathbb{D} \setminus U$ by a positive constant that depends on $F$.

\medskip

{\em Step 4.} To give a rigorous proof for the estimate ${\rm II}  \lesssim \|g \|^2_{A^2_\gamma}$, we split the integral over $U$ and $\mathbb{D} \setminus U$. 
Since $|O_{F'} (z)| \asymp |F'(z)| $ for $z \in \mathbb{D} \setminus U$, the integral
$$
\int_{\mathbb{D} \setminus U} \bigl | (g \circ F ) (z) \cdot O_{F'} (z)^{-s} \bigr |^2 (1-|z|)^{\gamma} dA(z)
$$
may be handled as in Step 2.
The estimate \eqref{annuli} and the quasi-disjointness of the annuli $\{A_c: F'(c) = 0 \}$ yield
$$
\int_{U} \bigl| g ( F (z)) O_{F'} (z)^{-s} \bigr|^2 (1-|z|)^{\gamma} dA(z) \lesssim \int_{\mathcal A}  \bigl |  g  (F (z))  O_{F'} (z)^{-s} \bigr |^2 (1-|z|)^{\gamma} dA(z),
$$
which also fits into the framework of Step 2 as $|O_{F'} (z)| \asymp |F'(z)| $ for $z \in \mathcal A$.

\medskip

{\em Step 5.} With help of the dominated convergence theorem, it is not difficult to see that the operator norm of $C_t - C_s$ tends to $0$ as $t \to s$, which means that the operators $C_s$ vary continuously in $s \in (\sigma, \infty)$. 
To show that the operators $C_s$ vary differentiably in $s \in (\sigma, \infty)$ with Banach space derivative
$- \log O_{F'} \cdot C_s$, one needs to check that the operator norms of
$$\frac{C_t - C_s}{t-s} + \log O_{F'} \cdot C_s$$
tend to 0 as $t \to s$. This can be readily seen from the mean value theorem and the dominated convergence theorem.
We leave the details to the reader.
\end{proof}

\begin{proof}[Proof of Theorem \ref{ac-measures-for-bmo-inner-functions}]
By composing $F$ with a M\"obius transformation, we may assume that $F(0) = 0$.
 According to Lemma \ref{sigma-half-plane}, $\dot C_0$ is just the weighted composition operator
$$
g \to - (g \circ F) \log O_{F'}.
$$
Taking the dual with respect to the $L^2$ pairing on the unit circle, we see that the weighted transfer operator 
$$
\dot L_0 h(\alpha) = - \int_{\partial \mathbb{D}} \log(O_{F'}) h \, d\sigma_\alpha
$$
 acts on the Dirichlet-type spaces $\mathcal D_{1+\gamma}$ that are dual to $A^2_\gamma$, which consist of holomorphic functions $h(z) = \sum a_n z^n$ on the unit disk for which
$$
\sum_{n=0}^\infty (n+1)^{1+\gamma} |a_n|^2 < \infty.
$$
We refer the reader to \cite{inner-tdf} for an in-depth discussion on the duality between composition and transfer operators: the unweighted case is discussed in Sections 3.1 and 3.2, while the weighted case is presented in Section 4.1. 
Taking the constant function $h = {\bf 1}$, we see that 
$$
\re \dot L_0 {\bf 1}(\alpha) = - \chi(\sigma_\alpha, F)
$$
belongs to the Sobolev space $W^{\delta, 2}(\partial \mathbb{D})$ for any $\delta = \frac{1 + \gamma}{2} > 0$. Consequently, $ \chi(\sigma_\alpha, F)$ agrees with a $C^\infty(\partial \mathbb{D})$ function a.e.
\end{proof}

\subsection*{Acknowledgements}

This research was supported by the Israel Science Foundation (grant 3134/21), the Generalitat de Catalunya (grant 2021 SGR 00071), the Spanish Ministerio de Ciencia e Innovación (project PID2021-123151NB-I00) and the Spanish Research Agency (Mar\'ia de Maeztu Program CEX2020-001084-M).

\bibliographystyle{amsplain}

\end{document}